\pdfoutput=1
\documentclass[reqno]{amsart}

\usepackage{amsfonts}
\usepackage{amsmath}
\usepackage{amssymb}
\usepackage{mathtools}
\usepackage{bm}
\usepackage{enumitem}
\usepackage{bookmark}
\usepackage[square,numbers,sort&compress]{natbib}
\usepackage{hyperref}
\hypersetup{colorlinks=true,linkcolor=blue,citecolor=red}
\allowdisplaybreaks

\def\Xint#1{\mathchoice
{\XXint\displaystyle\textstyle{#1}}
{\XXint\textstyle\scriptstyle{#1}}
{\XXint\scriptstyle\scriptscriptstyle{#1}}
{\XXint\scriptscriptstyle\scriptscriptstyle{#1}}
\!\int}
\def\XXint#1#2#3{{\setbox0=\hbox{$#1{#2#3}{\int}$ }
\vcenter{\hbox{$#2#3$ }}\kern-.6\wd0}}

\newtheorem{theorem}{Theorem}[section]
\newtheorem{lemma}[theorem]{Lemma}

\newtheorem{remark}[theorem]{Remark}
\newtheorem{assumption}[theorem]{Assumption}
\theoremstyle{definition}
\newtheorem{definition}[theorem]{Definition}

\numberwithin{equation}{section}

\newcommand{ \mc }{ \mathcal }
\newcommand{ \mr }{ \mathbb{R} }

\newcommand{ \bu }{ \mathbf{u} }
\newcommand{ \bw }{ \mathbf{w} }
\newcommand{ \bv }{ \mathbf{v} }
\newcommand{ \bz }{ \mathbf{z} }
\newcommand{ \bV }{ \mathbf{V} }
\newcommand{ \bA }{ \mathbf{A} }

\newcommand{\ds}{\displaystyle}
\newcommand{\plog}{\mc{P}_{\log^{\gamma}}}

\begin{document}
\title[Borderline double phase functionals with powers of the logarithm]{Borderline double phase functionals with powers of the logarithm: regularity in the vectorial case}

\author{Youlin Choi}\address{Department of Mathematics, Kyungpook National University, Daegu, 41566, Republic of Korea} \email{kdbfls1268@knu.ac.kr} \author{Jehan Oh}\address{Department of Mathematics, Kyungpook National University, Daegu, 41566, Republic of Korea} \email{jehan.oh@knu.ac.kr}

\subjclass[2020]{Primary 49N60, 35J50; Secondary 35B45, 35B65, 35J47}
\date{\today.}
\keywords{Functionals with nonstandard growth, logarithmic double phase functionals, vector-valued minimizers, H\"older regularity, higher integrability}

\begin{abstract}
    We study vector-valued local minimizers of the borderline double phase functional whose integrand is $|D\bw|^p+a(x)|D\bw|^p\log^\gamma(e+|D\bw|)$, where $p>1$, $\gamma>0$ and the coefficient $a(\cdot)$ is nonnegative and bounded. If the modulus of continuity $\omega(\cdot)$ of $a(\cdot)$ satisfies $\omega(r)\log^\gamma(1/r)\to0$ as $r\to0$, then minimizers are locally H\"older continuous for every exponent less than one. If $a(\cdot)$ is H\"older continuous, then the gradients of minimizers are locally H\"older continuous. In the vector-valued setting these results were previously available only for $\gamma=1$. The proof combines a comparison with frozen functionals, whose regularity theory applies uniformly in the frozen coefficient, with the higher integrability of minimizers.
\end{abstract}
\maketitle

\section{\bf Introduction}\label{section 1}

Research on double phase functionals can be traced back to the work of Zhikov in the 1980s \cite{MR864171}. Since then, the regularity theory for variational problems with nonstandard growth has developed substantially. In the early study of such non-autonomous functionals, Zhikov \cite{MR1486765} considered three model functionals in connection with the Lavrentiev phenomenon, namely
\begin{equation} \label{eq: 1.1}
    \begin{aligned} 
        \mc{M}(w,\Omega) &:=\int_\Omega c(x)|Dw|^2\,dx, \qquad 0<c^{-1}\in L^t(\Omega), \quad t>1;\\ 
        \mc{V}(w,\Omega) &:=\int_\Omega |Dw|^{p(x)}\,dx, \qquad 1<p(x)<\infty;\\ 
        \mc{P}_{p,q}(w,\Omega) &:=\int_{\Omega} \left(|Dw|^p+a(x)|Dw|^q\right)\,dx, \qquad 0\leq a(x)\leq L,\quad 1<p<q. 
    \end{aligned} 
\end{equation} 
These functionals provide fundamental examples of non-autonomous variational problems with nonstandard growth and have played an important role in the development of the theory of the Lavrentiev phenomenon and of non-uniform ellipticity \cite{MR1350506, MR1486765}.

To place these developments in context, we first recall functionals with classical $p$-growth \cite{MR727034, MR2566733}. The prototypical example is
\begin{equation*} 
    \mathcal{F}_p(w,\Omega) := \int_{\Omega}|Dw|^p\,dx, \qquad p>1, 
\end{equation*} 
and the corresponding Euler--Lagrange equation is of $p$-Laplace type. The regularity theory for classical $p$-growth functionals has served as a fundamental starting point in the study of the regularity of nonlinear variational problems and partial differential equations. Subsequently, the theory was extended to more general non-autonomous functionals, in which the integrand depends explicitly on the spatial variable \cite{MR1240398, MR1895714},
\begin{equation*} 
    \mathcal{F}(w,\Omega) := \int_{\Omega}F(x,Dw)\,dx, \qquad \Omega\subset\mr^n.
\end{equation*} 
In such non-autonomous functionals, the growth and the ellipticity of the integrand may vary with the spatial variable $x$, and this gives rise to regularity problems that are distinct from those of the classical $p$-growth setting.

A representative example of such non-autonomous functionals is the variable exponent functional \cite{MR3026953}
\begin{equation*} 
    \mathcal{V}(\bw,\Omega) := \int_{\Omega}|D\bw|^{p(x)}\,dx,
\end{equation*}
where the growth rate is determined by an exponent $p(x)$ that varies with the spatial variable, rather than by a fixed exponent $p$. In particular, the relationship between the continuity of $p(\cdot)$ and the regularity of minimizers has been studied extensively, and it is known that the gradient of a minimizer is H\"older continuous whenever $p(\cdot)$ is H\"older continuous \cite{MR1675954}.

On the other hand, a double phase functional, in which two different growth rates appear in a single functional, is given by
\begin{equation*} 
    \mathcal{P}_{p,q}(\bw,\Omega) :=\int_{\Omega} \left( |D\bw|^p + a(x)|D\bw|^q \right)\,dx,
\end{equation*}
where $1<p\leq q$ and $a(\cdot)$ is a nonnegative coefficient function \cite{MR3294408, MR3360738, MR3348922}. The regularity theory for functionals with $(p,q)$-growth was developed in a series of works, beginning with the work of Marcellini and subsequently extended to sharper results \cite{MR969900, MR1094446, MR1240398, MR2076158}. The two terms have different growth rates, and the coefficient $a(\cdot)$ determines the transition between the two phases as the spatial variable varies. The regularity theory for this double phase structure has since been developed extensively, with particular emphasis on the continuity of $a(\cdot)$ and on the relation between the two growth exponents $p$ and $q$ \cite{MR3294408, MR3360738, MR3775180, MR4149062}.

Along with such double phase growth, growth conditions involving a logarithmic perturbation of the $p$-growth have also been studied \cite{MR3570955}. A representative example is the functional
\begin{equation*} 
    \mathcal{L}_p(\bw,\Omega) :=\int_{\Omega} |D\bw|^p\log(e+|D\bw|)\,dx.
\end{equation*} 
This logarithmic perturbation is a borderline case of $p$-growth and arises naturally in the study of nonstandard growth problems \cite{MR3570955}.

In particular, the following functional, which combines logarithmic growth with the double phase structure, has been considered in \cite{MR3570955}:
\begin{equation*} 
    \mathcal{P}_{\log}(\bw,\Omega) :=\int_{\Omega} \left[ |D\bw|^p + a(x)|D\bw|^p\log(e+|D\bw|) \right]\,dx. 
\end{equation*}
Compared with the general double phase functional with integrand
\begin{equation*}
    |D\bw|^p+a(x)|D\bw|^q,
\end{equation*}
this functional may be viewed as a borderline case of double phase growth, and the regularity of its minimizers has been studied in \cite{MR3570955}. In this case, what matters is not only the H\"older continuity of $a(\cdot)$ but also the interaction between the modulus of continuity $\omega(\cdot)$ of $a(\cdot)$ and the logarithmic growth. Specifically, under the condition
\begin{equation*}
    |a(x)-a(y)| \leq \omega(|x-y|),
\end{equation*}
the quantity
\begin{equation*}
    \limsup_{r\to0} \omega(r)\log\left(\frac{1}{r}\right)
\end{equation*}
governs the regularity of minimizers and of their gradients \cite{MR3570955}.

More generally, the logarithmic factor may be replaced by a power of it, which leads to the growth
\begin{equation*} 
    |Dw|^p\log^\gamma(e+|Dw|).
\end{equation*} 
From this point of view, in this paper we consider the more general vector-valued logarithmic double phase functional
\begin{equation} \label{eq: 1.2}
    \mathcal{P}_{\log^\gamma}(\bw,\Omega) := \int_{\Omega}
    \left[ |D\bw|^p +  a(x)|D\bw|^p\log^\gamma(e+|D\bw|) \right]\,dx.
\end{equation}
Here $\gamma>0$, and the case $\gamma=1$ corresponds to the logarithmic double phase functional mentioned above. Thus, \eqref{eq: 1.2} describes a one-parameter family of borderline double phase growth conditions, all of which extend the classical $p$-growth.

In the scalar-valued setting, the regularity of minimizers of this functional has already been studied. Two regularity results follow from the general theory of \cite{MR4397041}, and a further one from \cite{MR4288665}. Combining them, we obtain the following three statements for scalar-valued minimizers.

Let $\Omega \subset \mr^n$ be a bounded open set and consider the functional $\mc{P}_{\log^{\gamma}}$ defined by  
\begin{equation}  
    \label{eq: 1.3}  
    W^{1,1}(\Omega) \ni w \mapsto \plog(w,\Omega):= \int_{\Omega}[|Dw|^p+a(x)|Dw|^p\log^{\gamma}(e+|Dw|)]\,dx.  
\end{equation} 
Here the function $a(\cdot)$ is nonnegative and bounded, $p>1$ and $\gamma>0$. 
\begin{theorem} \label{thm: 1.1}
    Let $u \in W^{1,1}(\Omega)$ be a local minimizer of the functional $\plog$ defined in \eqref{eq: 1.3}. Assume that $\omega(\cdot)$ is a modulus of continuity of $a(\cdot)$, that is, for every $x,y \in \Omega$,  
    \begin{equation} \label{eq: 1.4} 
        |a(x)-a(y)|\leq \omega(|x-y|).
    \end{equation}
    Define  
    \begin{equation}  \label{eq: 1.5}
        \ell:=\limsup_{r\to0}\omega(r)\log^{\gamma}\left( \frac{1}{r} \right).  
    \end{equation}  
    Then the following statements hold.  
    \begin{enumerate}[label=\upshape{(\roman*)}]  
        \item \label{thm: 1.1(1)} If $\ell <\infty$, then $u \in C_{\mathrm{loc}}^{0,\beta}(\Omega)$ for some $\beta \in (0,1)$.  
        \item \label{thm: 1.1(2)} If $\ell =0$, then $u \in C_{\mathrm{loc}}^{0,\beta}(\Omega)$ for every $\beta \in (0,1)$.  
        \item \label{thm: 1.1(3)} If $\omega(r)\lesssim r^{\sigma}$ with $\sigma \in (0,1)$, then $Du$ is locally H\"older continuous in $\Omega$.  
    \end{enumerate}  
\end{theorem}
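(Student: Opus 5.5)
The plan is to recognize $\plog$ as a special case of the general Musielak--Orlicz (generalized Orlicz) framework of \cite{MR4397041} and \cite{MR4288665}, and to read off each item by checking their structural assumptions. Set $\varphi(x,t):=t^p+a(x)t^p\log^\gamma(e+t)$. The first step is to record the basic properties of $\varphi$.
\begin{itemize}
\item $\varphi(x,\cdot)$ is convex, $C^1$ (indeed $C^2$ away from $0$), and $\varphi(x,t)/t$ is increasing.
\item It satisfies the doubling/almost-monotonicity conditions $t\mapsto \varphi(x,t)/t^{p}$ almost increasing and $t\mapsto\varphi(x,t)/t^{q}$ almost decreasing for any $q>p$ (constants depending on $p,\gamma,q,\|a\|_\infty$). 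This follows from $t\,\partial_t\log^\gamma(e+t)=\gamma\log^{\gamma-1}(e+t)\,t/(e+t)\le \varepsilon\log^\gamma(e+t)$ for large $t$.
\item It satisfies (A0): $\varphi(x,1)\approx 1$, uniformly in $x$ since $0\le a\le L$.
\end{itemize}
Thus $\varphi\in\Phi_c$ with $(\mathrm{aInc})_p$, $(\mathrm{aDec})_q$ and (A0), which is the standing class in \cite{MR4397041}.

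The key step is the continuity-in-$x$ condition, the analogue of (A1) or its vanishing version (VA1)/(wVA1). For a ball $B_r$ and $x,y\in B_r$, one must compare $\varphi(x,t)$ and $\varphi(y,t)$ in the relevant range $1\le t\le \varphi^{-1}_{B_r}(1/r^n)$, roughly $t\lesssim r^{-n/p}$. There
\[
|\varphi(x,t)-\varphi(y,t)|\le \omega(r)\,t^p\log^\gamma(e+t)\lesssim \omega(r)\log^\gamma(1/r)\,t^p\le \omega(r)\log^\gamma(1/r)\,\varphi(y,t).
\]
\begin{itemize}
\item If $\ell<\infty$, this gives $\varphi(x,t)\le C\varphi(y,t)$ in the relevant range, which is (A1). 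The H\"older result of \cite{MR4397041} then yields \ref{thm: 1.1(1)}.
\item If $\ell=0$, the same computation gives $\varphi(x,t)\le (1+\tilde\omega(r))\varphi(y,t)+\tilde\omega(r)$ with $\tilde\omega(r)\to0$, i.e.\ the vanishing condition (VA1). Their result for every exponent $\beta<1$, obtained by comparison with frozen functionals and Campanato iteration, gives \ref{thm: 1.1(2)}.
\end{itemize}
One should check that the "relevant range" in their definition indeed allows $t$ up to a negative power of $r$, so that $\log(e+t)\lesssim\log(1/r)$. This is where the exponent $\gamma$ enters and matches \eqref{eq: 1.5}.

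For \ref{thm: 1.1(3)}, invoke \cite{MR4288665}, which gives $C^{1,\alpha}$ regularity for minimizers of generalized Orlicz functionals under a H\"older-type (wVA1) condition of the form
\[
\varphi'(x,t)\le (1+Cr^{\sigma'})\varphi'(y,t)\quad\text{for } t \text{ in the relevant range}.
\]
With $\omega(r)\lesssim r^\sigma$ one gets the bound $r^\sigma\log^\gamma(1/r)\lesssim r^{\sigma/2}$, so the condition holds with a smaller exponent. The same estimate must be verified for $\partial_t\varphi$ and $\partial_t^2\varphi$. These are again of the form $t^{p-1}(1+a(x)\log^\gamma(e+t)+a(x)\gamma\log^{\gamma-1}(e+t)\tfrac{t}{e+t})$, and the difference in $x$ is controlled by $\omega(r)$ times $\log^\gamma$.

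The main obstacle I expect is bookkeeping rather than conceptual. First, one has to make sure that a local minimizer in $W^{1,1}$ has $\varphi(\cdot,|Du|)\in L^1_{\rm loc}$, as the cited theorems require. Second, the range of $t$ in the cited (A1)/(VA1) definitions must be controlled. The cited results usually restrict to $\varphi(y,t)\in[1,|B_r|^{-1}]$, which gives $t\le r^{-n/p}$ and hence $\log^\gamma(e+t)\le C(n,p,\gamma)\log^\gamma(1/r)$. If some version requires a range enlarged by higher integrability, the bound is $t\le r^{-n(1+\delta)/p}$, which changes only constants.
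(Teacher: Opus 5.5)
Your proposal takes the same route as the paper, which does not prove Theorem~\ref{thm: 1.1} but reads it off from the generalized Orlicz theory. Your checks of (A0), of the almost-monotonicity conditions and of the (A1)/(VA1)/(wVA1)-type continuity of $\varphi$ and $\varphi'$ are exactly what makes those results apply: on the relevant range $t\lesssim r^{-n/p}$ one has $\log(e+t)\lesssim 1+\log(1/r)$, so the oscillation in $x$ is controlled by $\omega(r)\log^\gamma(1/r)$. The one correction is bibliographic: the paper places the $C^{1,\alpha}$ theory in \cite{MR4397041} (its discussion of \cite[Remark~1.7]{MR4397041} concerns exactly that step) and credits only one item to \cite{MR4288665}, so you should cite \cite{MR4397041} for (iii) rather than \cite{MR4288665}, and most likely use \cite{MR4288665} for (i) instead.
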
 

In the vector-valued setting, however, the three statements above do not all hold in the same form. Indeed, everywhere regularity cannot be expected in general for vector-valued variational problems, as classical counterexamples show, see for example \cite{MR3099262}. Thus, in the vector-valued setting, the statements \ref{thm: 1.1(2)} and \ref{thm: 1.1(3)} of Theorem~\ref{thm: 1.1} are the natural objects of study. In the case $\gamma=1$, both have been established in \cite{MR3570955}. We also refer to \cite{baroni2022gradient}, where the $C^1$ regularity of vector-valued minimizers is obtained for $\gamma=1$ under a log-Dini continuity assumption on $a(\cdot)$, and to \cite{ragusa2024regularity}, where the variable exponent version of the borderline functional, with $|D\bw|^{p(x)}$ in place of $|D\bw|^p$, is treated for vector-valued minimizers. To the best of our knowledge, no regularity result for vector-valued minimizers of \eqref{eq: 1.2} with $\gamma\neq1$ is available in the literature.

In this respect, we point out that the vectorial case is explicitly left open in the theory of H\"ast\"o and Ok \cite{MR4397041} for functionals with generalized Orlicz growth, which contains the scalar case of \eqref{eq: 1.2}. In \cite[Remark~1.7]{MR4397041} the main difficulty is identified as follows. In order that local minimizers of the autonomous functional with frozen Orlicz function $\tilde{\varphi}$ have $C^{1,\alpha}$ regularity, $\tilde{\varphi}$ should satisfy not only $t\tilde{\varphi}''(t)\approx\tilde{\varphi}'(t)$ but also a H\"older type condition on $\tilde{\varphi}''$, as required in \cite[Assumption~2.2]{diening2009everywhere}, and it is unclear whether such a condition follows from the continuity assumptions imposed on the integrand. In the present setting, the frozen functionals are explicit. Freezing the coefficient $a(\cdot)$ at its infimum $a_0\geq0$ over a ball leads to the Orlicz function $t\mapsto t^p+a_0t^p\log^\gamma(e+t)$, and both conditions hold with constants depending only on $p$ and $\gamma$, in particular independent of $a_0$, see Remark~\ref{rmk: 2.9}. This is what makes the comparison argument of \cite{MR3570955} available for every $\gamma>0$. Motivated by this, we investigate whether the two regularity properties above continue to hold for every $\gamma>0$ for vector-valued local minimizers of \eqref{eq: 1.2}. Our analysis is governed by the interaction between $\gamma$ and the modulus of continuity of $a(\cdot)$.

We are now in a position to state the main result. Consider the functional $\mc{P}_{\log^\gamma}$ defined in \eqref{eq: 1.2}, where $a(\cdot)$ is nonnegative and bounded, $p>1$ and $\gamma>0$. Following \cite{MR3570955}, a function $\bu\in W^{1,1}(\Omega;\mr^N)$ is called a local minimizer of $\mc{P}_{\log^\gamma}$ if $|D\bu|^p+a(x)|D\bu|^p\log^\gamma(e+|D\bu|)\in L^1_{\mathrm{loc}}(\Omega)$ and
\begin{equation*}
    \mc{P}_{\log^\gamma}(\bu,\operatorname{supp}(\bu-\bw)) \leq \mc{P}_{\log^\gamma}(\bw,\operatorname{supp}(\bu-\bw))
\end{equation*}
holds for every $\bw\in W^{1,1}(\Omega;\mr^N)$ with $\operatorname{supp}(\bu-\bw)\Subset\Omega$. Local minimizers of the other functionals considered in Sections~\ref{section 2} and \ref{section 3} are defined in the same way. 
 
\begin{theorem} \label{thm: 1.2} 
    Let $\bu\in W^{1,1}(\Omega;\mr^N)$ be a vector-valued local minimizer of the functional $\mc{P}_{\log^\gamma}$ defined in \eqref{eq: 1.2}. Assume that $\omega(\cdot)$ is a modulus of continuity of $a(\cdot)$, that is, for every $x,y\in\Omega$, 
    \begin{equation} \label{eq: 1.6} 
        |a(x)-a(y)| \leq \omega(|x-y|). 
    \end{equation} 
    Define 
    \begin{equation} \label{eq: 1.7} 
        \ell := \limsup_{r\to0} \omega(r)\log^\gamma\left(\frac{1}{r}\right). 
    \end{equation} 
    Then the following statements hold. 
    \begin{enumerate}[label=\upshape{(\roman*)}]  
        \item \label{thm: 1.2(1)} If $\ell =0$, then $\bu \in C_{\mathrm{loc}}^{0,\beta}(\Omega;\mr^N)$ for every $\beta \in (0,1)$.  
        \item \label{thm: 1.2(2)} If $\omega(r)\lesssim r^{\sigma}$ with $\sigma \in (0,1)$, then $D\bu$ is locally H\"older continuous in $\Omega$.
    \end{enumerate}  
\end{theorem}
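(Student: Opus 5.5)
The plan follows the comparison scheme of \cite{MR3570955} for $\gamma=1$, adapted to general $\gamma>0$. The starting point is a higher integrability result for $\bu$. By a Caccioppoli inequality and Gehring's lemma, there is $\delta_0=\delta_0(n,N,p,\gamma,\|a\|_\infty)>0$ such that $H(x,|D\bu|):=|D\bu|^p+a(x)|D\bu|^p\log^\gamma(e+|D\bu|)$ belongs to $L^{1+\delta_0}_{\mathrm{loc}}$. Moreover, the reverse Hölder inequality
\[
\left(\Xint-_{B_{r/2}}H(x,|D\bu|)^{1+\delta_0}\right)^{\frac{1}{1+\delta_0}}\lesssim \Xint-_{B_r}H(x,|D\bu|)
\]
holds. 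The Caccioppoli inequality should follow from minimality after testing with $\bu-\eta^q(\bu-(\bu)_{B_r})$, using the $\Delta_2$ and $\nabla_2$ properties of $t\mapsto t^p\log^\gamma(e+t)$. Those properties hold with constants depending only on $p$ and $\gamma$. This step brings in the vectorial structure only through the minimality, so it carries over verbatim.

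Next, fix a small ball $B_{2r}\Subset\Omega$ and set $a_0:=\inf_{B_{2r}}a$. Let $\bv$ minimize the frozen functional $\int_{B_r}\bigl[|D\bw|^p+a_0|D\bw|^p\log^\gamma(e+|D\bw|)\bigr]$ with $\bv=\bu$ on $\partial B_r$. By Remark~\ref{rmk: 2.9}, the Orlicz function $\varphi_{a_0}(t)=t^p+a_0t^p\log^\gamma(e+t)$ satisfies $t\varphi_{a_0}''\approx\varphi_{a_0}'$ and the Hölder condition on $\varphi_{a_0}''$ of \cite[Assumption~2.2]{diening2009everywhere}, with constants independent of $a_0$. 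Hence $\bv$ is $C^{1,\alpha_0}$ with uniform estimates, namely
\[
\sup_{B_{r/2}}\varphi_{a_0}(|D\bv|)\lesssim\Xint-_{B_r}\varphi_{a_0}(|D\bv|)
\]
and a Campanato decay for $D\bv$ (or for $V_{a_0}(D\bv)$).

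The comparison step is the core of the argument. The monotonicity of the frozen functional gives
\[
\Xint-_{B_r}|V(D\bu)-V(D\bv)|^2\lesssim \Xint-_{B_r}\bigl[H(x,|D\bu|)-\varphi_{a_0}(|D\bu|)\bigr]\le \omega(2r)\Xint-_{B_r}|D\bu|^p\log^\gamma(e+|D\bu|).
\]
This uses minimality of $\bu$ and of $\bv$, together with $\varphi_{a_0}\le H(x,\cdot)$. The logarithm has to be absorbed against $\omega(r)$, and this is the main obstacle. I would split $B_r$ according to whether $|D\bu|\le r^{-M}$ or $|D\bu|> r^{-M}$. On the first set, $\log^\gamma(e+|D\bu|)\lesssim M^\gamma\log^\gamma(1/r)$, so the contribution is at most $\omega(r)\log^\gamma(1/r)\Xint-|D\bu|^p$, which is small when $\ell=0$. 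On the second set, I would use higher integrability and Chebyshev: the measure of the set is controlled by $r^{Mp}\Xint-|D\bu|^p$. The normalization $\int_{B_r}|D\bu|^p\lesssim r^{n-\epsilon}$ should be obtained inductively from the Morrey decay, or alternatively from $\int_{B_r}H\le\int_\Omega H$, which gives $\Xint-_{B_r}H\lesssim r^{-n}$. This allows the weights $\log^\gamma$ of the (bounded-by-power-of-$r$) averages to be compared with $\log^\gamma(1/r)$. Concretely, Jensen's inequality applied to $t\mapsto t\log^\gamma(e+t)$, with $\Xint-|D\bu|^p\lesssim r^{-n}$, gives a factor $\log^\gamma(1/r)$ up to a constant.

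The conclusion then follows in two parts. For (i), combining the comparison with the decay of $\bv$ yields
\[
\int_{B_\rho}|D\bu|^p\lesssim\bigl[(\rho/r)^n+\varepsilon(r)\bigr]\int_{B_r}H(x,|D\bu|),
\]
where $\varepsilon(r)\to0$ because $\ell=0$. A standard iteration lemma then gives $\int_{B_\rho}|D\bu|^p\lesssim\rho^{n-\epsilon}$ for every $\epsilon>0$, once it is checked that the $a$-part is controlled similarly. Morrey's embedding gives $\bu\in C^{0,\beta}$ for every $\beta<1$. For (ii), the case $\omega(r)\lesssim r^\sigma$, part (i) provides $\log^\gamma(1/r)$-bounds on averages. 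The comparison error then becomes $r^{\sigma}\log^\gamma(1/r)\lesssim r^{\sigma/2}$, and combining this with the $C^{1,\alpha_0}$ Campanato decay of $\bv$ gives a Campanato estimate for $V(D\bu)$, hence Hölder continuity of $D\bu$.
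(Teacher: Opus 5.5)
Your architecture matches the paper's: higher integrability, freezing $a$ on small balls, comparison, absorbing the logarithm through higher integrability and the crude bound $\Xint-_{B_r}|D\bu|^p\lesssim r^{-n}$, iteration, Campanato. However, the comparison inequality you call the core does not follow from the ingredients you cite. With $a_0=\inf_{B_{2r}}a$ you have $\varphi_{a_0}\le H(x,\cdot)$, and this points the wrong way. Minimality of $\bv$ gives $\int_{B_r}|V(D\bu)-V(D\bv)|^2\lesssim\int_{B_r}[\varphi_{a_0}(|D\bu|)-\varphi_{a_0}(|D\bv|)]$, and minimality of $\bu$ gives $\int_{B_r}H(x,D\bu)\le\int_{B_r}H(x,D\bv)$. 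Together they only yield
\[
\int_{B_r}|V(D\bu)-V(D\bv)|^2\,dx\lesssim\int_{B_r}(a-a_0)\bigl[g(D\bv)-g(D\bu)\bigr]\,dx,\qquad g(\bz)=|\bz|^p\log^\gamma(e+|\bz|).
\]
So the error sits on $D\bv$, not on $D\bu$. Controlling $\omega(r)\int_{B_r}g(D\bv)$ would need higher integrability of $D\bv$ up to $\partial B_r$, uniformly in $a_0$, which your sketch does not provide. Frozen minimality only bounds $a_0\int_{B_r}g(D\bv)$, which is useless when $a_0\ll\omega(r)$, and Lemma~\ref{thm: 2.14} covers only $a_0=0$.

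There are two repairs. You can freeze at $a_s:=\sup a$ instead: then $\varphi_{a_s}\ge H(x,\cdot)$ gives $\int\varphi_{a_s}(|D\bv|)\ge\int H(x,D\bv)\ge\int H(x,D\bu)$, and your chain produces exactly $\int(a_s-a)g(D\bu)$. Alternatively, argue as in Lemma~\ref{lem: 4.4}. Test both Euler--Lagrange equations with $\bu-\bv$ to get $\int(a_i(R)-a)\langle\partial g(D\bu),D\bu-D\bv\rangle$. Then apply Young's inequality with a parameter $A$, so that $D\bv$ enters only through $(|D\bu|+|D\bv|)^p$, which frozen minimality bounds. The cost is the term $|a-a_i(R)|^{p'}|D\bu|^p\log^{\gamma p'}(e+|D\bu|)$. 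This is why Lemma~\ref{lem: 4.3} is stated for general $\tilde\gamma$ and used with $\tilde\gamma=p'$, and the choice $A=\omega(R)\log^\gamma(1/R)$ closes the estimate.

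Your higher integrability step has a second, repairable gap. Caccioppoli plus Gehring requires bounding $\Xint-_{B_R}H(x,(\bu-(\bu)_{B_R})/R)$ by $(\Xint-_{B_R}[H(x,D\bu)]^d)^{1/d}$ with $d<1$. For non-constant $a$, this Sobolev--Poincar\'e step is exactly where \eqref{eq: 3.5} and $\|D\bu\|_{L^p}$ enter. The paper separates two cases. If $a_i(R)\le 2\omega(R)$, the logarithm is absorbed by $\omega(R)\log^\gamma(1/R)\le\tilde L$. If $a_i(R)>2\omega(R)$, then $a\approx a_i(R)$ on $B_R$ and Lemma~\ref{lem: 2.10} applies to the frozen $N$-function uniformly in $a_i(R)$. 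So $\delta_0$ cannot depend only on $\|a\|_\infty$. Since both hypotheses of the theorem imply \eqref{eq: 3.5}, the step can be fixed.

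Two smaller points:
\begin{itemize}
\item Jensen's inequality applied to the convex function $t\log^\gamma(e+t)$ gives a lower bound for $\Xint-|D\bu|^p\log^\gamma(e+|D\bu|)$, not the upper bound you need. The upper bound comes from H\"older plus the reverse H\"older inequality, as in \eqref{eq: 2.5}, or from your level-set splitting with Chebyshev, which does work if carried out.
\item In (ii), a Campanato estimate for $V(D\bu)$ is ill-posed, because $V=V_{\varphi_{a_0}}$ changes with the ball. Work with $|D\bu-(D\bu)_{B_\varrho}|^p$, compare via $\bV_p$ only, and balance the comparison term $R^{\sigma/4}(R/\varrho)^n$ against the Morrey bound $\Xint-_{B_R}H\lesssim R^{-\delta}$ from (i) by taking $R=\varrho^{1-\varepsilon}$.
\end{itemize}
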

The proof of Theorem~\ref{thm: 1.2} is based on a comparison argument, following the approach developed in \cite{MR3570955} for the case $\gamma=1$. More precisely, on each small ball $B_R\equiv B_R(x_0)\Subset\Omega$, we compare the minimizer $\bu$ with the minimizer of the frozen functional obtained by replacing the coefficient $a(\cdot)$ with its infimum $a_i(R):=\inf_{B_R}a(\cdot)$, and we control the resulting difference in terms of the modulus of continuity $\omega(\cdot)$ of $a(\cdot)$. For general $\gamma>0$, the interaction between the logarithmic factor $\log^\gamma(e+|D\bu|)$ and $\omega(\cdot)$ has to be handled carefully, and the quantity $\omega(R)\log^\gamma(1/R)$ plays a crucial role in this process. In particular, by combining the higher integrability of minimizers with estimates in the Zygmund-type Orlicz space $L^p\log^\gamma L$, we show that the error arising in the comparison step is controlled exactly by $\omega(R)\log^\gamma(1/R)$.

The paper is organized as follows. In Section~\ref{section 2}, we fix the notation and collect basic properties of the Zygmund-type Orlicz space $L^p\log^\gamma L$ and of $N$-functions, together with regularity estimates for minimizers of the frozen functionals, in which the coefficient $a(\cdot)$ is constant. Section~\ref{section 3} is devoted to a Caccioppoli type inequality, from which we derive the higher integrability of minimizers by means of Gehring's theory. In Section~\ref{section 4}, we prove Theorem~\ref{thm: 1.2}. In Subsection~\ref{subsection 4.1} we first establish a decay estimate by means of an energy comparison and a comparison lemma. In Subsection~\ref{subsection 4.2}, under the assumption $\ell=0$, we prove that minimizers are H\"older continuous for every exponent $\beta\in(0,1)$ by using properties of Campanato spaces. Finally, in Subsection~\ref{subsection 4.3}, we prove the H\"older continuity of the gradient of minimizers when $a(\cdot)$ is H\"older continuous.

\section{Preliminaries}\label{section 2}
\subsection{\bf Notation}\label{subsection 2.1}
We denote by $c\geq1$ a generic constant, which may vary from line to line. We also use variants such as $c_1$, $c_*$ and $\bar{c}$. To emphasize the dependence on parameters, we use parentheses. For example, $c\equiv c(n,p,q)$ means that $c$ depends only on $n$, $p$ and $q$. For a center $x_0\in\mr^n$ and a radius $r>0$, we denote the open ball by
\begin{equation} \label{eq: 2.1}
    B_r(x_0):=\{x \in \mr^n \mid |x-x_0|<r\}.
\end{equation}
We simply write $B_r \equiv B_r(x_0)$ when the center is clear from the context or not important. If $\mc{B}\subset \mr^n$ is a measurable set with positive and finite measure and $g: \mc{B} \to \mr^k$, $k \geq 1$, is an integrable map, we denote its integral average by
\begin{equation}    \label{eq: 2.2}
    (g)_{\mc{B}} \equiv \Xint-_{\mc{B}} g(x) \,dx :=\frac{1}{|\mc{B}|} \int_{\mc{B}} g(x)\,dx.
\end{equation}
Throughout this paper, $\log x$ denotes the natural logarithm of $x > 0$. Furthermore, for any $\gamma > 0$, we define $\log^\gamma (e+x) := [\log(e+x)]^\gamma$. For any $\bz, \bw \in \mr^{N\times n}$, we denote the standard inner product by
\begin{equation} \label{eq: 2.3}
    \langle \bz, \bw \rangle := \text{tr}(\bz^T \bw) = \sum_{i=1}^N \sum_{j=1}^n z_{ij} w_{ij},
\end{equation}
and its associated Euclidean norm by $|\bz| := \sqrt{\langle \bz, \bz \rangle}$.

\subsection{\bf Orlicz spaces}\label{subsection 2.2}
Let $\Omega \subset \mr^n$ be a bounded open set and let $\varphi:[0,\infty)\to[0,\infty)$ be a convex and strictly increasing function satisfying
$$
\lim_{t \to 0} \frac{\varphi(t)}{t}=0 \qquad \text{and} \qquad \lim_{t \to \infty}\frac{\varphi(t)}{t}=\infty.
$$
We denote by $L^{\varphi}(\Omega)$ the corresponding Orlicz space endowed with the Luxemburg norm
\begin{equation}    \label{eq: 2.4}
    \|f\|_{\varphi}:=\inf \left\{ \lambda>0 \;:\; \Xint-_{\Omega} \varphi\left( \frac{|f|}{\lambda} \right)\,dx \leq 1 \right\}.
\end{equation}
We focus on the particular Orlicz space generated by $\varphi(t)=t^p\log^{\gamma}(e+t)$, where $p\geq1$ and $\gamma>0$. The corresponding Orlicz space is denoted by
$$
L^{p,\gamma}:=L^p\log^{\gamma}L,
$$
and is called a Zygmund-type Orlicz space. For $f\in L^p(B_R)$ with $f\not\equiv0$, we use the quantity
\begin{equation*}
    \Xint-_{B_R}|f|^p\log^{\gamma}\left( e+\frac{|f|^p}{(|f|^p)_{B_R}} \right)\,dx,
\end{equation*}
which, by \cite{iwaniec1999operator}, is comparable to $\|f\|_{L^{p,\gamma}}^p$ when the Luxemburg norm \eqref{eq: 2.4} is taken on $B_R$, with constants depending only on $p$ and $\gamma$. The only property of this quantity needed in what follows is the following elementary estimate. For every $q>1$ and every ball $B_\varrho$ concentric with $B_R$ such that $R/2\leq\varrho\leq R$, we have
\begin{equation}    \label{eq: 2.5}
    \Xint-_{B_\varrho}|f|^p\log^{\gamma}\left( e+\frac{|f|^p}{(|f|^p)_{B_R}} \right)\,dx \leq c(n,\gamma,q)\left( \Xint-_{B_\varrho}|f|^{pq}\,dx \right)^{\frac{1}{q}}.
\end{equation}
Indeed, set $g:=|f|^p/(|f|^p)_{B_R}$, so that $(g)_{B_\varrho}\leq 2^n(g)_{B_R}=2^n$. Then H\"older's inequality and the elementary inequality $\log^{\gamma q'}(e+g)\leq c(\gamma,q)(1+g)$, where $q'=q/(q-1)$, yield
\begin{equation*}
    \begin{aligned}
        \Xint-_{B_\varrho}g\log^{\gamma}(e+g)\,dx
        & \leq \left( \Xint-_{B_\varrho}g^q\,dx \right)^{\frac{1}{q}}\left( c(\gamma,q)\Xint-_{B_\varrho}(1+g)\,dx \right)^{\frac{1}{q'}} \\
        & \leq c(n,\gamma,q)\left( \Xint-_{B_\varrho}g^q\,dx \right)^{\frac{1}{q}},
    \end{aligned}
\end{equation*}
and \eqref{eq: 2.5} follows upon multiplying both sides by $(|f|^p)_{B_R}$. For further details on Zygmund-type Orlicz spaces, we refer the reader to \cite{cho2024zygmund}.

\subsection{\bf\texorpdfstring{$N$}{N}-functions}\label{subsection 2.3}
\begin{definition} \label{def: 2.1}
    A convex function $\varphi : [0,\infty) \to [0,\infty)$ is called an $N$-function if
    \begin{equation} \label{eq: 2.6}
        \begin{gathered}
            \varphi \in C^1([0,\infty))\cap C^2((0,\infty)), \; \varphi(0)=\varphi'(0)=0,\\
            \varphi(t)=0 \iff t=0,\\
            \varphi'(t)\text{ is monotone increasing and } \lim_{t\to\infty}\varphi'(t)=\infty.
        \end{gathered}
    \end{equation}
\end{definition}

\begin{definition}    \label{def: 2.2}
    Let $\bV_\varphi : \mr^{N\times n} \to \mr^{N\times n}$ be defined by 
    \begin{equation}    \label{eq: 2.7}
        \bV_\varphi (\bz):=\left( \frac{\varphi'(|\bz|)}{|\bz|} \right)^{1/2}\bz,
    \end{equation}
    for all $\bz \in \mr^{N\times n}$.
\end{definition}

By Definition~\ref{def: 2.1}, $\bV_\varphi$ is a bijection of $\mr^{N\times n}$.

\begin{assumption}  \label{asum: 2.3}
    Let $\varphi$ be an $N$-function. Assume that there exists $c_\varphi\geq1$ such that
    \begin{equation}    \label{eq: 2.8}
        \frac{1}{c_\varphi} \leq \frac{\varphi''(t)t}{\varphi'(t)} \leq c_\varphi, \quad \forall t>0.
    \end{equation}
\end{assumption}
\begin{remark}    \label{rmk: 2.4}
    From \eqref{eq: 2.8} we obtain, by integration, that for all $t\geq 0$
    \begin{equation}    \label{eq: 2.9}
        \frac{\varphi(t)}{c(c_\varphi)} \leq \varphi'(t)t \leq c(c_\varphi)\varphi(t).
    \end{equation}
\end{remark}

The following two lemmas are standard consequences of Assumption~\ref{asum: 2.3}, see \cite{diening2008fractional, diening2009everywhere}. The first one expresses the monotonicity of the map $\bz\mapsto\varphi'(|\bz|)\bz/|\bz|$ in terms of $\bV_\varphi$.

\begin{lemma}    \label{lem: 2.5}
    Let $\bV_\varphi : \mr^{N\times n} \to \mr^{N\times n}$ be defined by \eqref{eq: 2.7} and let $\varphi$ satisfy Assumption \ref{asum: 2.3}. Then there exists a constant $c\geq 1$, depending only on $n$ and $c_\varphi$, such that for all $\bz_1, \bz_2 \in \mr^{N\times n}$,
    \begin{equation}    \label{eq: 2.10}
        \left\langle \frac{\varphi'(|\bz_1|)}{|\bz_1|}\bz_1-\frac{\varphi'(|\bz_2|)}{|\bz_2|}\bz_2, \bz_1-\bz_2 \right\rangle \approx |\bV_\varphi (\bz_1) -\bV_\varphi (\bz_2)|^2.
    \end{equation}
\end{lemma}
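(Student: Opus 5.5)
The plan is to reduce \eqref{eq: 2.10} to a one-dimensional comparison along the segment joining $\bz_1$ and $\bz_2$. Set $\bA(\bz):=\frac{\varphi'(|\bz|)}{|\bz|}\bz$ (with $\bA(0)=0$). Both sides of \eqref{eq: 2.10} will be compared with the quantity
\begin{equation*}
    \frac{\varphi'(|\bz_1|+|\bz_2|)}{|\bz_1|+|\bz_2|}\,|\bz_1-\bz_2|^2 .
\end{equation*}
If $\bz_1=\bz_2=0$ there is nothing to prove. Otherwise, by \eqref{eq: 2.8} the function $\varphi'(t)/t$ behaves like a power-type weight; the integrated form of \eqref{eq: 2.8} gives
\begin{equation*}
    s^{c_\varphi}\varphi'(t)\le\varphi'(st)\le s^{1/c_\varphi}\varphi'(t)\quad\text{for } s\le1 ,
\end{equation*}
and in particular $\varphi'(2t)\le c\,\varphi'(t)$.

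For the left-hand side, write $\bz_\theta:=\bz_2+\theta(\bz_1-\bz_2)$ and use
\begin{equation*}
    \langle \bA(\bz_1)-\bA(\bz_2),\bz_1-\bz_2\rangle=\int_0^1\langle D\bA(\bz_\theta)(\bz_1-\bz_2),\bz_1-\bz_2\rangle\,d\theta .
\end{equation*}
A direct computation gives
\begin{equation*}
    D\bA(\bz)=\frac{\varphi'(|\bz|)}{|\bz|}\Bigl(\mathrm{Id}+\Bigl(\frac{\varphi''(|\bz|)|\bz|}{\varphi'(|\bz|)}-1\Bigr)\frac{\bz\otimes\bz}{|\bz|^2}\Bigr).
\end{equation*}
By \eqref{eq: 2.8} this matrix has eigenvalues between $\min\{1,c_\varphi^{-1}\}$ and $\max\{1,c_\varphi\}$ times $\varphi'(|\bz|)/|\bz|$. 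Hence the left side of \eqref{eq: 2.10} is comparable to
\begin{equation*}
    |\bz_1-\bz_2|^2\int_0^1\frac{\varphi'(|\bz_\theta|)}{|\bz_\theta|}\,d\theta .
\end{equation*}

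The same argument applies to $\bV_\varphi$, which is of the same form with $\varphi'(t)/t$ replaced by $(\varphi'(t)/t)^{1/2}$. Its logarithmic derivative exponent is $\frac12(\varphi''t/\varphi'-1)+1\in[\frac{1}{2}(1+c_\varphi^{-1}),\frac12(1+c_\varphi)]$, which is bounded above and away from zero. Then $|\bV_\varphi(\bz_1)-\bV_\varphi(\bz_2)|$ is bounded above by the analogous integral, with integrand $(\varphi'(|\bz_\theta|)/|\bz_\theta|)^{1/2}$, times $|\bz_1-\bz_2|$. For the lower bound, the same integral formula fails because of the square, so instead I will use monotonicity: $\langle D\bV_\varphi(\bz)\xi,\xi\rangle\ge c^{-1}(\varphi'(|\bz|)/|\bz|)^{1/2}|\xi|^2$. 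This gives $|\bV_\varphi(\bz_1)-\bV_\varphi(\bz_2)|\,|\bz_1-\bz_2|\ge c^{-1}|\bz_1-\bz_2|^2\int_0^1(\cdot)^{1/2}d\theta$.

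The key remaining step, and the main obstacle, is the segment estimate
\begin{equation*}
    \int_0^1 \Bigl(\frac{\varphi'(|\bz_\theta|)}{|\bz_\theta|}\Bigr)^{\kappa}d\theta\approx\Bigl(\frac{\varphi'(|\bz_1|+|\bz_2|)}{|\bz_1|+|\bz_2|}\Bigr)^{\kappa},\qquad \kappa\in\{1/2,1\}.
\end{equation*}
The difficulty is that $\varphi'(t)/t$ may blow up as $t\to0$ (when $c_\varphi$-exponent is below $2$), so the segment passing near the origin must be handled. I will follow the classical Acerbi--Fusco-type argument. Upper bound: $\varphi'(t)/t\lesssim (t/T)^{1/c_\varphi-1}\varphi'(T)/T$ for $t\le T:=|\bz_1|+|\bz_2|$. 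Since $|\bz_\theta|\ge$ distance to a line, the integral $\int_0^1|\bz_\theta|^{-\kappa(1-1/c_\varphi)}d\theta$ is bounded by $c\,T^{-\kappa(1-1/c_\varphi)}$, because $\kappa(1-1/c_\varphi)<1$. Lower bound: on a subinterval of $\theta$ of length at least $1/4$ one has $|\bz_\theta|\ge T/4$, and there the integrand is comparable to its value at $T$ by the doubling property. In the case $\varphi'(t)/t$ increasing, argue symmetrically. Combining these estimates, both sides of \eqref{eq: 2.10} are comparable to $\frac{\varphi'(T)}{T}|\bz_1-\bz_2|^2$, with constants depending only on $c_\varphi$ (and harmlessly on $n$).
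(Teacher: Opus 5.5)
The paper does not prove Lemma~\ref{lem: 2.5}. It quotes it as a standard consequence of Assumption~\ref{asum: 2.3} and refers to the literature, so your argument is a genuinely self-contained replacement rather than a variant of a proof in the text. It is correct.

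You reduce both sides of \eqref{eq: 2.10} to $\frac{\varphi'(T)}{T}|\bz_1-\bz_2|^2$, with $T=|\bz_1|+|\bz_2|$. The tools are the fundamental theorem of calculus along $\bz_\theta$, the spectral bounds for the symmetric matrices $D\bA$ and $D\bV_\varphi$, and an Acerbi--Fusco type segment lemma. This is the classical route; in the cited Orlicz literature it is usually packaged through shifted $N$-functions. Compared with the citation, your proof makes two things visible:
\begin{itemize}
\item The constants depend only on $c_\varphi$, not on $n$ or $N$. This is exactly the uniformity in $a_0$ the paper needs via Remark~\ref{rmk: 2.9}.
\item Since $\varphi'(T)/T\approx\varphi''(T)$ by \eqref{eq: 2.8}, your intermediate equivalence also gives \eqref{eq: 2.11} of Lemma~\ref{lem: 2.6} immediately.
\end{itemize}

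Three steps should be spelled out.

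First, the bound $\int_0^1|\bz_\theta|^{-\beta}\,d\theta\le c(\beta)T^{-\beta}$ for $\beta<1$ needs a case split, which ``distance to a line'' does not quite give. Let $\theta_0\in\mr$ minimize $|\bz_\theta|$. Then $|\bz_\theta|^2=|\bz_{\theta_0}|^2+(\theta-\theta_0)^2|\bz_1-\bz_2|^2$, so the integral is at most $\frac{2}{1-\beta}|\bz_1-\bz_2|^{-\beta}$. This suffices when $|\bz_1-\bz_2|\ge T/4$. If instead $|\bz_1-\bz_2|<T/4$, then $|\bz_\theta|\ge T/4$ on the whole segment, and the bound is immediate.

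Second, when the segment passes through the origin, say explicitly why the fundamental theorem of calculus still applies. $\bA$ and $\bV_\varphi$ are continuous at $0$, and the derivative bounds you prove are integrable in $\theta$.

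Third, the remark about the case where $\varphi'(t)/t$ is increasing is unnecessary. The bound $\frac{\varphi'(st)}{st}\le s^{1/c_\varphi-1}\frac{\varphi'(t)}{t}$ for $0<s\le1$ holds with no monotonicity assumption, and it is all the upper bound uses.
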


\begin{lemma}    \label{lem: 2.6}
    Let $\bV_\varphi : \mr^{N\times n} \to \mr^{N\times n}$ be defined by \eqref{eq: 2.7} and let $\varphi$ satisfy Assumption \ref{asum: 2.3}. Then there exists a constant $c\geq 1$, depending only on $n$ and $c_\varphi$, such that for all $\bz_1, \bz_2 \in \mr^{N\times n}$,
    \begin{equation}    \label{eq: 2.11}
        \varphi''(|\bz_1|+|\bz_2|)|\bz_1-\bz_2|^2 \leq c|\bV_\varphi(\bz_1)-\bV_\varphi(\bz_2)|^2. 
    \end{equation}
    Moreover, as a direct consequence of \eqref{eq: 2.9}, we have
    \begin{equation}    \label{eq: 2.12}
        \frac{1}{c}|\bV_\varphi (\bz)|^2 \leq \varphi(|\bz|) \leq c|\bV_\varphi(\bz)|^2,
    \end{equation}
    for every $\bz \in \mr^{N\times n}$, where $c\equiv c(c_\varphi)$.
\end{lemma}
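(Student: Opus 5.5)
The plan is to reduce \eqref{eq: 2.11} to a one-dimensional comparison along the segment joining $\bz_1$ and $\bz_2$, using the integral representation of $\bV_\varphi(\bz_1)-\bV_\varphi(\bz_2)$ and the two-sided bound \eqref{eq: 2.8}. First I would note the standard equivalence, valid under Assumption~\ref{asum: 2.3} (see \cite{diening2008fractional}),
\begin{equation*}
    |\bV_\varphi(\bz_1)-\bV_\varphi(\bz_2)|^2 \approx \varphi''(|\bz_1|+|\bz_2|)\,|\bz_1-\bz_2|^2 ,
\end{equation*}
with constants depending only on $c_\varphi$; inequality \eqref{eq: 2.11} is one half of it. To see it, write $\bz_t:=\bz_2+t(\bz_1-\bz_2)$ and
\begin{equation*}
    \bV_\varphi(\bz_1)-\bV_\varphi(\bz_2)=\int_0^1 D\bV_\varphi(\bz_t)(\bz_1-\bz_2)\,dt .
\end{equation*}
A direct computation gives the eigenvalues of the symmetric matrix $D\bV_\varphi(\bz)$: they are $(\varphi'(|\bz|)/|\bz|)^{1/2}$ in directions orthogonal to $\bz$, and $\frac{d}{ds}\bigl(\varphi'(s)s\bigr)^{1/2}|_{s=|\bz|}$ in the radial direction. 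By \eqref{eq: 2.8}, both are comparable to $\varphi''(|\bz|)^{1/2}$, since $\varphi''(s)s\approx\varphi'(s)$. Moreover, $D\bV_\varphi(\bz)$ is positive definite, so the integrand stays in a cone around $\bz_1-\bz_2$ and no cancellation can occur. This gives
\begin{equation*}
    |\bV_\varphi(\bz_1)-\bV_\varphi(\bz_2)| \geq c^{-1}\int_0^1 \varphi''(|\bz_t|)^{1/2}\,dt\;|\bz_1-\bz_2| .
\end{equation*}

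The key step is the lower bound $\int_0^1\varphi''(|\bz_t|)^{1/2}dt\gtrsim \varphi''(|\bz_1|+|\bz_2|)^{1/2}$, which I expect to be the main obstacle. The difficulty is that $\varphi''$ need not be monotone, and the segment may pass near the origin, where $\varphi''$ may blow up (for $p<2$) or vanish (for $p>2$). I would handle this with the almost-monotonicity coming from \eqref{eq: 2.8}: integrating $\varphi''(s)s/\varphi'(s)\in[c_\varphi^{-1},c_\varphi]$ shows $\varphi'(s)/s^{1/c_\varphi}$ is nondecreasing and $\varphi'(s)/s^{c_\varphi}$ is nonincreasing. Hence $\varphi''(s)\approx\varphi'(s)/s$, and for $\lambda\in[1/4,1]$ this gives $\varphi''(\lambda s)\approx\varphi''(s)$ with constants depending on $c_\varphi$. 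One then picks a sub-interval of $t$ of length at least $1/4$ on which $|\bz_t|\geq \frac14(|\bz_1|+|\bz_2|)$. Such an interval exists because $t\mapsto|\bz_t|$ is convex, and its endpoint values sum to $|\bz_1|+|\bz_2|$, so near the larger endpoint it is at least a fixed fraction. On that interval we also have $|\bz_t|\leq|\bz_1|+|\bz_2|$, so $\varphi''(|\bz_t|)\approx\varphi''(|\bz_1|+|\bz_2|)$. Restricting the integral to this interval and squaring yields \eqref{eq: 2.11}.

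For \eqref{eq: 2.12}, observe that $|\bV_\varphi(\bz)|^2=\varphi'(|\bz|)|\bz|$, which by \eqref{eq: 2.9} is comparable to $\varphi(|\bz|)$ with constants depending only on $c_\varphi$. The dependence on $n$ enters only through the norm comparisons in the matrix computation, so the constant in \eqref{eq: 2.11} depends on $n$ and $c_\varphi$ as claimed.
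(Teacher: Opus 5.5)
Your proof is correct. The paper does not actually prove \eqref{eq: 2.11}: it quotes it as a standard consequence of Assumption~\ref{asum: 2.3} from the literature cited before Lemma~\ref{lem: 2.5}. It obtains \eqref{eq: 2.12} exactly as you do, from $|\bV_\varphi(\bz)|^2=\varphi'(|\bz|)|\bz|$ and \eqref{eq: 2.9}.

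Your route for \eqref{eq: 2.11} is essentially the mechanism behind the cited results. You integrate $D\bV_\varphi$ along the segment, bound its smallest eigenvalue from below by $c^{-1}\varphi''(|\bz_t|)^{1/2}$, and use the almost-monotonicity of $\varphi''$ on $t\in[3/4,1]$. This is specialized to the one direction the paper needs, in the proof of Theorem~\ref{thm: 2.13}.

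What your approach buys:
- It is self-contained, with a constant depending only on $c_\varphi$. Nothing in your argument depends on $n$ or $N$, so your closing remark about $n$ entering is unnecessary.
- It sidesteps the harder reverse bound $\int_0^1\varphi''(|\bz_t|)^{1/2}\,dt\lesssim\varphi''(|\bz_1|+|\bz_2|)^{1/2}$, which needs an Acerbi--Fusco type estimate when the segment passes near the origin.

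The citation, in turn, supplies the full two-sided equivalence, which is also what underlies Lemma~\ref{lem: 2.5}.

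Two details should be tightened.

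First, convexity of $t\mapsto|\bz_t|$ is not the right reason for the lower bound near the larger endpoint. Convexity and the endpoint values alone do not prevent $|\bz_t|$ from being tiny on most of $[3/4,1]$. What you need is the Lipschitz bound, i.e.\ the triangle inequality: if $|\bz_1|\geq|\bz_2|$, then for $t\in[3/4,1]$,
\begin{equation*}
|\bz_t|\geq t|\bz_1|-(1-t)|\bz_2|\geq(2t-1)|\bz_1|\geq\tfrac14(|\bz_1|+|\bz_2|).
\end{equation*}

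Second, if the segment meets the origin, $D\bV_\varphi$ is undefined there. By the bound above, this can only happen at some $t<3/4$. It suffices to note that $t\mapsto\langle\bV_\varphi(\bz_t),\bz_1-\bz_2\rangle$ is continuous with nonnegative derivative away from that point, hence nondecreasing. This gives
\begin{equation*}
|\bV_\varphi(\bz_1)-\bV_\varphi(\bz_2)|\,|\bz_1-\bz_2|\geq\langle\bV_\varphi(\bz_1)-\bV_\varphi(\bz_2),\bz_1-\bz_2\rangle\geq\int_{3/4}^{1}\langle D\bV_\varphi(\bz_t)(\bz_1-\bz_2),\bz_1-\bz_2\rangle\,dt,
\end{equation*}
after which your eigenvalue bound finishes the proof.
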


For the choice $\varphi(t) = \frac{1}{p}t^p$, we write $\bV_p(\cdot) \equiv \bV_\varphi(\cdot)$, that is,
\begin{equation}\label{eq: 2.13}
    \bV_p(\bz) := |\bz|^{\frac{p-2}{2}}\bz.
\end{equation}
Similarly, for $\varphi(t) = \frac{1}{p}t^p \log^{\gamma}(e+t)$ with $\gamma>0$, we write $\bV_{{\log}^\gamma}(\cdot) \equiv \bV_\varphi(\cdot)$, that is,
\begin{equation}\label{eq: 2.14}
    \bV_{{\log}^\gamma}(\bz) := \left( |\bz|^{p-2}\log^{\gamma}(e+|\bz|) + \frac{\gamma|\bz|^{p-1}\log^{\gamma-1}(e+|\bz|)}{p(e+|\bz|)} \right)^{\frac{1}{2}}\bz.
\end{equation}
For the $p$-Laplacian case we also recall the following classical inequalities, see for example \cite{MR1895714, MR1962933}.
\begin{lemma}    \label{lem: 2.7}
    Let $\bV_p: \mr^{N\times n} \to \mr^{N\times n}$ be defined by \eqref{eq: 2.13}. Then for all $\bz_1,\bz_2 \in \mr^{N\times n}$,
    \begin{equation}    \label{eq: 2.15}
        \frac{1}{c}(|\bz_1|+|\bz_2|)^{\frac{p-2}{2}}|\bz_1-\bz_2| \leq |\bV_p(\bz_1)-\bV_p(\bz_2)| \leq c(|\bz_1|+|\bz_2|)^{\frac{p-2}{2}}|\bz_1-\bz_2|,
    \end{equation}
    where $c\equiv c(n,p)$. In particular, when $p\geq 2$, we have
    \begin{equation}    \label{eq: 2.16}
        |\bz_1-\bz_2|^p\leq c|\bV_p(\bz_1)-\bV_p(\bz_2)|^2,
    \end{equation}
    for a constant $c\equiv c(n,p)$.
\end{lemma}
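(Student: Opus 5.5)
The statement to prove is Lemma~\ref{lem: 2.7}, the classical two-sided comparison for $\bV_p(\bz)=|\bz|^{\frac{p-2}{2}}\bz$. The plan is to reduce to Lemma~\ref{lem: 2.6} and Lemma~\ref{lem: 2.5}, applied with $\varphi(t)=\frac1p t^p$. For this $\varphi$ we have $\varphi'(t)t/\varphi(t)=p$ and $\varphi''(t)t/\varphi'(t)=p-1$. So Assumption~\ref{asum: 2.3} holds with $c_\varphi=\max\{p-1,(p-1)^{-1}\}$, and $\bV_\varphi=\bV_p$ by \eqref{eq: 2.13}.

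\emph{Lower bound.} Since $\varphi''(t)=(p-1)t^{p-2}$, inequality \eqref{eq: 2.11} gives directly
\[
(p-1)(|\bz_1|+|\bz_2|)^{p-2}|\bz_1-\bz_2|^2\le c\,|\bV_p(\bz_1)-\bV_p(\bz_2)|^2 .
\]
Taking square roots yields the left inequality in \eqref{eq: 2.15}.

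\emph{Upper bound.} I would argue directly rather than through the lemmas. We may assume $|\bz_2|\le|\bz_1|$ and $\bz_1\ne0$. I split into two cases.

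In the first case, $|\bz_1-\bz_2|\le\frac12|\bz_1|$. Then the segment $\bz_t=\bz_2+t(\bz_1-\bz_2)$ satisfies $|\bz_t|\ge\frac12|\bz_1|$, so it stays away from the origin. There $\bV_p$ is smooth and $|D\bV_p(\bz)|\le c(p)|\bz|^{\frac{p-2}{2}}$. Moreover $|\bz_t|\approx|\bz_1|\approx|\bz_1|+|\bz_2|$ along the segment, so the mean value theorem gives the right inequality in \eqref{eq: 2.15}.

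In the second case, $|\bz_1-\bz_2|>\frac12|\bz_1|$, which gives $|\bz_1|+|\bz_2|\le 2|\bz_1|<4|\bz_1-\bz_2|$, i.e. $|\bz_1-\bz_2|\approx|\bz_1|+|\bz_2|$. Then
\[
|\bV_p(\bz_1)-\bV_p(\bz_2)|\le|\bz_1|^{\frac p2}+|\bz_2|^{\frac p2}\le 2(|\bz_1|+|\bz_2|)^{\frac p2},
\]
and $(|\bz_1|+|\bz_2|)^{\frac p2}\le c\,(|\bz_1|+|\bz_2|)^{\frac{p-2}{2}}|\bz_1-\bz_2|$. This estimate holds for every $p>1$, including $p<2$, since only the comparability $|\bz_1-\bz_2|\approx|\bz_1|+|\bz_2|$ is used. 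The main care point is the first case when $p<2$: the derivative bound must be evaluated along the segment, which is why we keep $|\bz_t|\ge\frac12|\bz_1|$.

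\emph{Consequence \eqref{eq: 2.16}.} For $p\ge2$ we have $|\bz_1-\bz_2|\le|\bz_1|+|\bz_2|$ and $\frac{p-2}{2}\ge0$. Hence
\[
|\bz_1-\bz_2|^{p}=|\bz_1-\bz_2|^{p-2}|\bz_1-\bz_2|^2\le(|\bz_1|+|\bz_2|)^{p-2}|\bz_1-\bz_2|^2,
\]
and the lower bound in \eqref{eq: 2.15} bounds the right-hand side by $c\,|\bV_p(\bz_1)-\bV_p(\bz_2)|^2$. All constants depend only on $n$ and $p$.
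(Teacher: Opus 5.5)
Your proof is correct, but it takes a different route from the paper, which gives no proof of Lemma~\ref{lem: 2.7} and simply quotes it as classical.

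\textbf{The usual proof.} It writes $\bV_p(\bz_1)-\bV_p(\bz_2)=\int_0^1 D\bV_p(\bz_t)(\bz_1-\bz_2)\,dt$ with $\bz_t=\bz_2+t(\bz_1-\bz_2)$. It then uses the two pointwise facts $|D\bV_p(\bz)|\le c|\bz|^{\frac{p-2}{2}}$ and $\langle D\bV_p(\bz)\bm{h},\bm{h}\rangle\ge\min\{1,\tfrac p2\}|\bz|^{\frac{p-2}{2}}|\bm{h}|^2$. Both inequalities in \eqref{eq: 2.15} then follow at once from $\int_0^1|\bz_t|^{\frac{p-2}{2}}\,dt\approx(|\bz_1|+|\bz_2|)^{\frac{p-2}{2}}$. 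This estimate holds because $\frac{p-2}{2}>-1$, even when the segment crosses the origin.

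\textbf{Your route.} You take the lower bound from Lemma~\ref{lem: 2.6} with $\varphi(t)=\frac1p t^p$. Your check that this is an $N$-function with $c_\varphi=\max\{p-1,(p-1)^{-1}\}$ and $\bV_\varphi=\bV_p$ is right. For the upper bound, your case split avoids integrating a singular weight altogether. When $|\bz_1-\bz_2|\le\frac12|\bz_1|$, the segment stays in $\{|\bz|\ge\frac12|\bz_1|\}$ and the pointwise derivative bound suffices. Otherwise $|\bz_1-\bz_2|\approx|\bz_1|+|\bz_2|$, and the triangle inequality is enough.

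\textbf{Trade-off.} You gain a completely elementary upper bound. You give up self-containedness, because Lemma~\ref{lem: 2.6} is a heavier Orlicz-type statement whose own proof rests on essentially the same segment integral. In the two-sided form usually stated in the literature, that lemma would give both bounds of \eqref{eq: 2.15} at once.

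\textbf{Two minor points.} For the vector-valued map $\bV_p$ you should invoke the mean value inequality (the integral form above) rather than the mean value theorem. Also, only the left inequality of \eqref{eq: 2.15} and \eqref{eq: 2.16} are used later in the paper, in Lemma~\ref{lem: 4.9}. So your reduction to Lemma~\ref{lem: 2.6} already covers everything the paper actually needs.
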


\begin{remark}    \label{rmk: 2.8}
    If $\varphi_1$ and $\varphi_2$ are $N$-functions satisfying Assumption \ref{asum: 2.3} with constants $c_{\varphi_1}$ and $c_{\varphi_2}$, respectively, then their sum $\varphi := \varphi_1 + \varphi_2$ satisfies Assumption \ref{asum: 2.3} with the constant $c_\varphi := 2\max\{c_{\varphi_1}, c_{\varphi_2}\}$. Indeed,
    \begin{equation*}
        \frac{\varphi_1''(t)t}{\varphi_1'(t)+\varphi_2'(t)}+\frac{\varphi_2''(t)t}{\varphi_1'(t)+\varphi_2'(t)} \leq \frac{\varphi_1''(t)t}{\varphi_1'(t)}+\frac{\varphi_2''(t)t}{\varphi_2'(t)}\leq c_{\varphi_1}+c_{\varphi_2}
    \end{equation*}
    and
    \begin{equation*}
        \frac{\varphi_1''(t)t}{\varphi_1'(t)+\varphi_2'(t)}+\frac{\varphi_2''(t)t}{\varphi_1'(t)+\varphi_2'(t)} \geq \frac{[\varphi_1''(t)+\varphi_2''(t)]t}{2\max\left\{ \varphi_1'(t),\varphi_2'(t) \right\}} \geq \frac{1}{2} \min\left\{ \frac{1}{c_{\varphi_1}},\frac{1}{c_{\varphi_2}} \right\}.
    \end{equation*}
\end{remark}
\begin{remark}\label{rmk: 2.9}
    The function
    \begin{equation}\label{eq: 2.17}
        \varphi(t) := t^p + a_0 t^p \log^\gamma(e+t), \quad p > 1, \; a_0 \geq 0, \text{ and } \gamma > 0,
    \end{equation}
    is an $N$-function, and by Remark~\ref{rmk: 2.8} it satisfies Assumption~\ref{asum: 2.3} with a constant $c_\varphi \equiv c_\varphi(p,\gamma)$ independent of $a_0$. The regularity results of \cite{diening2009everywhere} for autonomous functionals with Orlicz growth, which we use in Subsection~\ref{subsection 2.5}, require in addition to \eqref{eq: 2.8} a H\"older type condition on $\varphi''$, namely that
    \begin{equation*}
        |\varphi''(t+s)-\varphi''(t)| \leq c\,\varphi''(t)\left( \frac{|s|}{t} \right)^{\beta} \qquad \text{whenever } t>0 \text{ and } |s|<\frac{t}{2},
    \end{equation*}
    for some $\beta\in(0,1]$ and $c\geq1$, see \cite[Assumption~2.2]{diening2009everywhere}. The function in \eqref{eq: 2.17} satisfies this condition with $\beta=1$ and with $c\equiv c(p,\gamma)$, again independently of $a_0$. Indeed, writing $\psi(t):=t^p\log^\gamma(e+t)$, a direct computation gives
    \begin{gather*}
        \frac{1}{c}\,t^{p-2}\log^\gamma(e+t) \leq \psi''(t) \leq c\,t^{p-2}\log^\gamma(e+t), \\
        t|\psi'''(t)| \leq c\,t^{p-2}\log^\gamma(e+t),
    \end{gather*}
    for every $t>0$, where $c\equiv c(p,\gamma)\geq1$, while $t|(t^p)'''|=|p-2|(t^p)''$. Hence $t|\varphi'''(t)|\leq c(p,\gamma)\varphi''(t)$ for every $t>0$. Since \eqref{eq: 2.8} implies $\varphi'(\lambda t)\leq \lambda^{c_\varphi}\varphi'(t)$ for every $\lambda\geq1$ and $t>0$, we also have $\varphi''(\tau)\approx\varphi''(t)$ whenever $t/2\leq\tau\leq 3t/2$, with implicit constants depending only on $c_\varphi$, and the above condition with $\beta=1$ follows from the mean value theorem.
\end{remark}

The following Sobolev--Poincar\'e type inequality follows from \cite[Theorem~7]{diening2008fractional}.
\begin{lemma}    \label{lem: 2.10} 
    Let $\varphi:[0,\infty) \to [0,\infty)$ be an $N$-function satisfying Assumption \ref{asum: 2.3}. Let $B_R \equiv B_R(x_0) \subset \Omega$. Then there exist an exponent $d_1\equiv d_1(n,N,c_\varphi)\in(0,1)$ and a constant $c\equiv c(n,N,c_\varphi)$ such that
    \begin{equation}    \label{eq: 2.18}
        \Xint-_{B_R}\varphi \left( \frac{|\bm{f}-(\bm{f})_{B_R}|}{R} \right)\,dx \leq c\left( \Xint-_{B_R} [\varphi(|D\bm{f}|)]^{d_1}\,dx \right)^{\frac{1}{d_1}}
    \end{equation}
    for every function $\bm{f} \in W^{1,1}(\Omega;\mr^N)$ with $\ds \int_{\Omega}\varphi(|D\bm{f}|)\,dx <\infty$.
\end{lemma}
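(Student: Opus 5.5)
The plan is to reduce Lemma~\ref{lem: 2.10} to the modular Sobolev--Poincar\'e inequality of \cite[Theorem~7]{diening2008fractional}, which is stated for $N$-functions $\varphi$ such that $\varphi$ and its conjugate $\varphi^*$ satisfy the $\Delta_2$ condition. That theorem gives, for some $\theta\in(0,1)$ depending on the $\Delta_2$ constants and on $n$,
\[
\Xint-_{B_R}\varphi\left(\frac{|\bm{f}-(\bm{f})_{B_R}|}{R}\right)dx \leq c\left(\Xint-_{B_R}[\varphi(|D\bm{f}|)]^{\theta}\,dx\right)^{\frac1\theta}.
\]
Once the hypotheses are checked, I simply set $d_1:=\theta$.

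The first step is to verify $\Delta_2$ for $\varphi$ and $\varphi^*$ from Assumption~\ref{asum: 2.3} alone. By \eqref{eq: 2.8}, $(\log\varphi')'(t)\in[1/(c_\varphi t),c_\varphi/t]$. Integrating from $t$ to $\lambda t$ gives
\[
\lambda^{1/c_\varphi}\varphi'(t)\le\varphi'(\lambda t)\le\lambda^{c_\varphi}\varphi'(t),\qquad \lambda\ge1.
\]
Combined with \eqref{eq: 2.9}, this yields $\varphi(2t)\le c(c_\varphi)\varphi(t)$. It also yields the lower index bound $\varphi(\lambda t)\ge \lambda^{1+1/c_\varphi}\varphi(t)/c$, which is equivalent to $\Delta_2$ for $\varphi^*$. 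So all constants in \cite{diening2008fractional} depend only on $c_\varphi$, and hence $d_1$ and $c$ depend only on $n$, $N$ and $c_\varphi$. The vector-valued case is handled either directly, since the cited result is stated for $\mr^N$-valued maps, or componentwise, using $\varphi(|\bm f|)\le\varphi(\sum_i|f_i|)\le c(N,c_\varphi)\sum_i\varphi(|f_i|)$ by $\Delta_2$. In the componentwise route one also uses that $|Df_i|\le|D\bm f|$ and that $t\mapsto t^{d_1}$ is subadditive when summing the right-hand sides.

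The second step is the reduction to the ball. The hypothesis $\int_\Omega\varphi(|D\bm f|)<\infty$ ensures $\bm f\in W^{1,\varphi}(B_R)$, so the cited inequality applies on $B_R\subset\Omega$. Scaling is not an issue: one can apply the unit-ball result to $\bm g(y):=\bm f(x_0+Ry)/R$, for which $D\bm g(y)=D\bm f(x_0+Ry)$ and $(\bm g-(\bm g)_{B_1})(y)=(\bm f-(\bm f)_{B_R})(x_0+Ry)/R$. Both sides of \eqref{eq: 2.18} then transform exactly, because they are written with averages.

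The main obstacle is whether the cited theorem really delivers the form with the exponent $d_1<1$ inside, rather than a version with $\varphi$ on both sides. If only the latter were available, I would prove the improvement via the pointwise estimate $|\bm f(x)-(\bm f)_{B_R}|\le c\,I_1(|D\bm f|\chi_{B_R})(x)$. Then I would use Jensen and the boundedness of the Riesz potential in modular form, applied to $\varphi_\theta:=\varphi^{1/\theta}$-type rescalings. For $\theta$ close to $1$, $t\mapsto\varphi(t)^\theta$ is still comparable to an $N$-function with lower index greater than $1$, by the index bound $1+1/c_\varphi$ above. This is exactly the argument underlying \cite[Theorem~7]{diening2008fractional}, so I expect to invoke it rather than redo it.
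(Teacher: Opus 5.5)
Your proposal is correct and follows the paper's route. The paper justifies the lemma only by invoking \cite[Theorem~7]{diening2008fractional}, and your derivation of the $\Delta_2$ condition for both $\varphi$ and $\varphi^*$ from Assumption~\ref{asum: 2.3} is the unstated check that citation needs, with constants depending only on $c_\varphi$; since that theorem already gives the form with $\varphi^{\theta}$, $\theta<1$, inside the average, your fallback Riesz-potential argument is not needed.
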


\subsection{\bf Miscellanea}\label{subsection 2.4} In this subsection, we collect some lemmas and elementary inequalities used throughout the paper. We begin with two classical iteration lemmas. The first one is \cite[Lemma~7.3]{MR1962933}.
\begin{lemma}    \label{lem: 2.11}
    Let $\phi:(0,\tilde{R}] \to [0,\infty)$ be a nondecreasing function and let $\varepsilon\geq0$ and $\tilde{c}\geq1$ be such that
    \begin{equation*}
        \phi(\varrho)\leq \tilde{c}\left[ \left( \frac{\varrho}{R} \right)^n +\varepsilon \right]\phi(R)
    \end{equation*}
    whenever $0 <\varrho \leq R \leq \tilde{R}$. Then for every $\delta \in (0,n)$ there exists $\bar{\varepsilon}\equiv \bar{\varepsilon}(n,\delta, \tilde{c})>0$ such that, if $\varepsilon \leq \bar{\varepsilon}$, then
    \begin{equation*}
        \phi(\varrho)\leq \bar{c}\left( \frac{\varrho}{R} \right)^{n-\delta}\phi(R)
    \end{equation*}
    whenever $0<\varrho \leq R \leq \tilde{R}$, for a constant $\bar{c}\equiv \bar{c}(n,\delta,\tilde{c})$.
\end{lemma}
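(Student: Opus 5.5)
The plan is to prove Lemma~\ref{lem: 2.11} by the standard iteration over a geometric sequence of radii. Fix $\delta\in(0,n)$ and choose $\tau\in(0,1)$ so that $2\tilde{c}\,\tau^{n}\leq \tau^{n-\delta/2}$. This is possible because it amounts to $\tau^{\delta/2}\leq 1/(2\tilde{c})$, so we take $\tau:=(2\tilde{c})^{-2/\delta}$; note that $\tau$ depends only on $n$, $\delta$ and $\tilde{c}$. Then set $\bar{\varepsilon}:=\tau^{n}$. If $\varepsilon\leq\bar{\varepsilon}$, the hypothesis with $\varrho=\tau R$ gives
\begin{equation*}
    \phi(\tau R)\leq \tilde{c}\left(\tau^{n}+\varepsilon\right)\phi(R)\leq 2\tilde{c}\,\tau^{n}\phi(R)\leq \tau^{n-\delta/2}\phi(R)
\end{equation*}
for every $R\leq\tilde{R}$.

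Iterating this estimate yields $\phi(\tau^{k}R)\leq \tau^{k(n-\delta/2)}\phi(R)\leq \tau^{k(n-\delta)}\phi(R)$ for every integer $k\geq0$, since $\tau<1$.

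To pass to a general $\varrho\in(0,R]$, pick $k\geq0$ with $\tau^{k+1}R<\varrho\leq\tau^{k}R$. Since $\phi$ is nondecreasing, we get
\begin{equation*}
    \phi(\varrho)\leq\phi(\tau^{k}R)\leq \tau^{k(n-\delta)}\phi(R)=\tau^{-(n-\delta)}\,\tau^{(k+1)(n-\delta)}\phi(R)\leq \tau^{-(n-\delta)}\left(\frac{\varrho}{R}\right)^{n-\delta}\phi(R).
\end{equation*}
This is the claim with $\bar{c}:=\tau^{-(n-\delta)}\equiv\bar{c}(n,\delta,\tilde{c})$.

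There is no real obstacle here. The only point requiring care is choosing $\tau$ before $\bar{\varepsilon}$, so that both depend only on $(n,\delta,\tilde{c})$. One should also keep in mind that the hypothesis is only needed at the single ratio $\varrho/R=\tau$, applied at every scale $R\leq\tilde{R}$.
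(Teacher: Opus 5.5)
Your proof is correct: with $\tau=(2\tilde{c})^{-2/\delta}$, $\bar{\varepsilon}=\tau^{n}$ and $\bar{c}=\tau^{-(n-\delta)}$, every step checks out, including the existence of $k$ and the use of monotonicity. The paper gives no proof of its own and simply cites \cite[Lemma~7.3]{MR1962933}, whose standard proof is exactly this geometric-radii iteration (the intermediate exponent $n-\delta/2$ is only needed there to absorb an additive $BR^{\beta}$ term, which is absent here).
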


The second one is \cite[Lemma~6.1]{MR1962933}.
\begin{lemma}    \label{lem: 2.12}
    Let $h:[r_1,r_2]\to [0,\infty)$ be a bounded function such that
    \begin{equation*}
        h(r)\leq \theta h(s)+\frac{A}{(s-r)^\ell} \quad \text{for every } r_1 \leq r<s\leq r_2,
    \end{equation*}
    where $\theta \in (0,1)$, $A \geq 0$ and $\ell>0$. Then
    \begin{equation*}
        h(r)\leq \frac{c(\theta,\ell)A}{(r_2-r_1)^{\ell}}.
    \end{equation*}
\end{lemma}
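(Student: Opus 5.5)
The plan is the standard geometric iteration argument from \cite[Lemma~6.1]{MR1962933}. We read the conclusion as the estimate at the left endpoint $r=r_1$; for a general $r\in[r_1,r_2)$ the same argument gives the bound with $(r_2-r)^\ell$ in the denominator. Since $\theta<1$, we first fix $\lambda\in(0,1)$, depending only on $\theta$ and $\ell$, such that
\begin{equation*}
    \theta\lambda^{-\ell}<1,
\end{equation*}
for instance $\lambda:=\theta^{1/(2\ell)}$, so that $\theta\lambda^{-\ell}=\theta^{1/2}$. We then construct an increasing sequence of radii starting at $r_1$ and accumulating at $r_2$:
\begin{equation*}
    \varrho_0:=r_1, \qquad \varrho_{i+1}:=\varrho_i+(1-\lambda)\lambda^i(r_2-r_1).
\end{equation*}
A geometric sum gives $\varrho_i=r_2-\lambda^i(r_2-r_1)$, hence $\varrho_i\in[r_1,r_2)$ and $\varrho_i\to r_2$.

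Next we apply the hypothesis with $r=\varrho_i$ and $s=\varrho_{i+1}$, using $\varrho_{i+1}-\varrho_i=(1-\lambda)\lambda^i(r_2-r_1)$:
\begin{equation*}
    h(\varrho_i)\leq \theta h(\varrho_{i+1})+\frac{A}{(1-\lambda)^\ell\lambda^{i\ell}(r_2-r_1)^\ell}.
\end{equation*}
Iterating this from $i=0$ to $i=k-1$ yields
\begin{equation*}
    h(r_1)\leq \theta^k h(\varrho_k)+\frac{A}{(1-\lambda)^\ell(r_2-r_1)^\ell}\sum_{i=0}^{k-1}\left(\theta\lambda^{-\ell}\right)^i.
\end{equation*}

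Finally we let $k\to\infty$. Because $h$ is bounded on $[r_1,r_2]$, we have $\theta^k h(\varrho_k)\leq\theta^k\sup h\to0$. The series converges by the choice of $\lambda$, with sum $(1-\theta\lambda^{-\ell})^{-1}$. This gives
\begin{equation*}
    h(r_1)\leq \frac{c(\theta,\ell)A}{(r_2-r_1)^\ell}, \qquad c(\theta,\ell):=(1-\lambda)^{-\ell}\left(1-\theta\lambda^{-\ell}\right)^{-1}.
\end{equation*}
For general $r$, the same construction with $r_1$ replaced by $r$ gives the bound with $(r_2-r)^{\ell}$.

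The only delicate point is choosing $\lambda$ so that the geometric gain $\theta$ dominates the loss $\lambda^{-\ell}$ coming from the shrinking gaps between consecutive radii. The boundedness assumption on $h$ is exactly what removes the remainder term $\theta^k h(\varrho_k)$; without it the argument breaks down.
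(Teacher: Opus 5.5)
Your proof is correct and is the standard geometric iteration from \cite[Lemma~6.1]{MR1962933}, which the paper cites instead of proving; choosing $\lambda=\theta^{1/(2\ell)}$ so that $\theta\lambda^{-\ell}<1$, and using boundedness of $h$ to remove $\theta^k h(\varrho_k)$, are exactly the key steps. Reading the conclusion at $r=r_1$ is the right interpretation (it is false at $r=r_2$, where $h$ is unconstrained), and it is the form used in Lemma~\ref{lem: 3.4} with $r_1=R/2$, $r_2=R$.
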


The following elementary properties of the logarithm will be used frequently:
\begin{equation}    \label{eq: 2.19}
    \log^{\gamma}(e+xy) \leq c_\gamma\left[ \log^{\gamma}(e+x)+\log^{\gamma}(e+y) \right], \qquad c_\gamma:=\max\{1,2^{\gamma-1}\},
\end{equation}
and
\begin{equation}    \label{eq: 2.20}
    \log^{\gamma}(e+Ax) \leq A^{\gamma}\log^{\gamma}(e+x)
\end{equation}
for every $x,y \geq 0$, $A \geq 1$ and $\gamma>0$, where \eqref{eq: 2.19} follows from $e+xy\leq(e+x)(e+y)$ and \eqref{eq: 2.20} from $e+Ax\leq(e+x)^A$. Also,
\begin{equation}    \label{eq: 2.21}
    (x+y)^p\log^{\gamma}(e+x+y)\leq cx^p\log^{\gamma}(e+x)+cy^p\log^{\gamma}(e+y)
\end{equation}
for all $x,y \geq 0$ and all $p\geq 1$, where $c=2^p(\log2+1)^\gamma$.

\subsection{\bf A priori estimates for frozen functionals}\label{subsection 2.5}
In this subsection, we gather regularity results for vector-valued minimizers of the frozen functionals associated with \eqref{eq: 1.2}, that is, with a constant coefficient. These estimates are consequences of the results in \cite{diening2009everywhere}. We consider functionals of the form
\begin{equation}    \label{eq: 2.22}
    \mc{P}_0(\bw, \Omega) := \int_{\Omega} \left( |D\bw|^p + a_0 |D\bw|^p \log^{\gamma}(e + |D\bw|) \right) dx,
\end{equation}
where $a_0 \ge 0$ is a constant. The following higher-order regularity result follows from \cite{diening2009everywhere}.
\begin{theorem}    \label{thm: 2.13}
    Let $\bv \in W^{1,p}(\Omega;\mr^N)$ be a vector-valued local minimizer of the functional $\mc{P}_0$ defined in \eqref{eq: 2.22}. Then $D\bv \in C_{\mathrm{loc}}^{0,\tilde{\alpha}}(\Omega; \mr^{N\times n})$ for some exponent $\tilde{\alpha} \in (0,1)$ depending only on $n$, $N$, $p$ and $\gamma$, and in particular independent of $a_0 \geq 0$ and of $\bv$. Moreover, for every ball $B_R \subset \Omega$, the following inequalities hold:
    \begin{equation}    \label{eq: 2.23}
        \sup_{B_{R/2}} (|D\bv|^p+a_0|D\bv|^p\log^{\gamma}(e+|D\bv|)) \leq c \Xint-_{B_R} (|D\bv|^p+a_0|D\bv|^p\log^{\gamma}(e+|D\bv|)) \,dx
    \end{equation}
    and, for every $0 < \varrho \leq R$,
    \begin{equation}    \label{eq: 2.24}
        \Xint-_{B_\varrho}|D\bv-(D\bv)_{B_\varrho}|^p \,dx \leq c\left( \frac{\varrho}{R} \right)^{\tilde{\alpha}p}\Xint-_{B_R}(|D\bv|^p+a_0|D\bv|^p\log^{\gamma}(e+|D\bv|))\,dx,
    \end{equation}
    where $c$ depends only on $n,N,p$ and $\gamma$.
\end{theorem}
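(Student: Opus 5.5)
The plan is to reduce Theorem~\ref{thm: 2.13} to the everywhere regularity theory of \cite{diening2009everywhere} for autonomous functionals $\int \varphi(|D\bw|)\,dx$. We apply it to the $N$-function $\varphi(t)=t^p+a_0t^p\log^\gamma(e+t)$ from \eqref{eq: 2.17}. First we check the hypotheses of that theory: the integrand is radial and convex, $\varphi$ is an $N$-function, and by Remark~\ref{rmk: 2.9} it satisfies both \eqref{eq: 2.8} and the H\"older-type condition on $\varphi''$ (with $\beta=1$). Crucially, all constants depend only on $p$ and $\gamma$. Since every constant and exponent in \cite{diening2009everywhere} depends only on $n,N$ and the characteristics of $\varphi$ (namely $c_\varphi$, $\beta$ and the constant in the H\"older condition), the outputs, including $\tilde\alpha$ and $c$, will then depend only on $n,N,p,\gamma$ and in particular not on $a_0$. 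A local minimizer $\bv\in W^{1,p}$ with $\varphi(|D\bv|)\in L^1_{\rm loc}$ solves the Euler--Lagrange system $\operatorname{div}\bigl(\varphi'(|D\bv|)D\bv/|D\bv|\bigr)=0$, which is the setting treated there.

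Next we extract the two estimates. The sup bound \eqref{eq: 2.23} is the $L^\infty$ gradient estimate of \cite{diening2009everywhere}: $\sup_{B_{R/2}}\varphi(|D\bv|)\le c\,\Xint-_{B_R}\varphi(|D\bv|)\,dx$, obtained from a Moser iteration for $|\bV_\varphi(D\bv)|^2\approx\varphi(|D\bv|)$ via \eqref{eq: 2.12}. If that reference states it in the form $\sup_{B_{R/2}}|D\bv|\le c\,\varphi^{-1}\bigl(\Xint-_{B_R}\varphi(|D\bv|)\bigr)$, we apply $\varphi$ and use the $\Delta_2$ property coming from \eqref{eq: 2.9}. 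For \eqref{eq: 2.24}, the reference yields the excess decay
\begin{equation*}
\Xint-_{B_\varrho}|\bV_\varphi(D\bv)-(\bV_\varphi(D\bv))_{B_\varrho}|^2\,dx\le c\Bigl(\frac{\varrho}{R}\Bigr)^{2\alpha_0}\Xint-_{B_{R/2}}|\bV_\varphi(D\bv)-(\bV_\varphi(D\bv))_{B_{R/2}}|^2\,dx
\end{equation*}
for $\varrho\le R/2$, together with $D\bv\in C^{0,\tilde\alpha}$. The easiest route to \eqref{eq: 2.24} is to use the H\"older continuity of $D\bv$ directly, in its quantitative form
\begin{equation*}
|D\bv(x)-D\bv(y)|\le c\Bigl(\frac{|x-y|}{R}\Bigr)^{\tilde\alpha}\sup_{B_{R/2}}|D\bv|,\qquad x,y\in B_{R/4}.
\end{equation*}
This gives $\Xint-_{B_\varrho}|D\bv-(D\bv)_{B_\varrho}|^p\le c(\varrho/R)^{\tilde\alpha p}\sup_{B_{R/2}}|D\bv|^p$ for $\varrho\le R/4$. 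Then $\sup|D\bv|^p\le\sup\varphi(|D\bv|)$ combined with \eqref{eq: 2.23} gives the right-hand side of \eqref{eq: 2.24}. For $R/4<\varrho\le R$ the estimate is trivial: $\Xint-_{B_\varrho}|D\bv-(D\bv)_{B_\varrho}|^p\le 2^p\,4^n\Xint-_{B_R}|D\bv|^p$.

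The main obstacle is obtaining the scale-invariant H\"older estimate for $D\bv$ itself (rather than for $\bV_\varphi(D\bv)$) with constants independent of $a_0$. We would handle it as follows. The decay for $\bV_\varphi(D\bv)$ in \cite{diening2009everywhere} is $a_0$-uniform by the above dependence of constants, and it gives Campanato hence H\"older continuity of $\bV_\varphi(D\bv)$. Write $M:=\sup_{B_{R/2}}|D\bv|$ and use Lemma~\ref{lem: 2.6} with $|\bz_1|,|\bz_2|\le M$:
\begin{equation*}
\varphi''(2M)|\bz_1-\bz_2|^2\le c|\bV_\varphi(\bz_1)-\bV_\varphi(\bz_2)|^2.
\end{equation*}
Since $|\bV_\varphi(\bz)|^2\lesssim \varphi(M)\approx M^2\varphi''(M)$ by \eqref{eq: 2.8}--\eqref{eq: 2.9}, this converts the oscillation of $\bV_\varphi(D\bv)$, which is of size $(\varrho/R)^{\alpha_0}\varphi(M)^{1/2}$, into an oscillation of $D\bv$ of size $c(\varrho/R)^{\alpha_0}M$, valid in the degenerate direction. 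For $p<2$ one must additionally handle the case where $|D\bv|$ is small compared with $M$. There we would use the standard degenerate/nondegenerate alternative of \cite{diening2009everywhere}, which again involves only $c_\varphi$ and $\beta$; this possibly forces $\tilde\alpha<\alpha_0$, but keeps $\tilde\alpha=\tilde\alpha(n,N,p,\gamma)$.
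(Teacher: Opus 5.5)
The reduction to \cite{diening2009everywhere} via Remark~\ref{rmk: 2.9} and the sourcing of \eqref{eq: 2.23} match the paper. The gap is in the step you yourself single out as the main obstacle: turning the oscillation of $\bV_\varphi(D\bv)$ into an oscillation of $D\bv$.

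\textbf{Where the conversion fails.} Lemma~\ref{lem: 2.6} only gives $\varphi''(|\bz_1|+|\bz_2|)|\bz_1-\bz_2|^2\le c|\bV_\varphi(\bz_1)-\bV_\varphi(\bz_2)|^2$. Replacing $\varphi''(|\bz_1|+|\bz_2|)$ by $\varphi''(2M)$ requires $\varphi''$ to be almost decreasing. Since $\varphi''(t)\approx t^{p-2}(1+a_0\log^\gamma(e+t))$, this holds (with constants depending on $p,\gamma$) for $p<2$, but fails for $p\ge 2$. Take $\bz_1=0$ and $|\bz_2|=s\ll M$. Your left-hand side is then $\approx M^{p-2}(1+a_0\log^\gamma(e+2M))s^2$, while the right-hand side is $\approx \varphi(s)\approx s^p(1+a_0\log^\gamma(e+s))$, and the ratio is unbounded. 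For $p=2$ it is unbounded in $M$ as soon as $a_0>0$. So $\operatorname{osc}D\bv\lesssim(\varrho/R)^{\alpha_0}M$ is unproved exactly in the range $p\ge2$ where you call the conversion valid. In the range $p<2$, where you defer to a degenerate/nondegenerate alternative, nothing extra is needed, since $\varphi''(t)\ge p(p-1)t^{p-2}\ge p(p-1)(2M)^{p-2}$ for $0<t\le 2M$.

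\textbf{The repair.} Do not insist on $\sup_{B_{R/2}}|D\bv|$ on the right-hand side. Estimate \eqref{eq: 2.24} only needs $\sup_{B_{R/2}}\varphi(|D\bv|)$, which \eqref{eq: 2.23} controls.
\begin{itemize}
\item For $p\ge2$, use $|\bz_1-\bz_2|^p\le(|\bz_1|+|\bz_2|)^{p-2}|\bz_1-\bz_2|^2\le c\varphi''(|\bz_1|+|\bz_2|)|\bz_1-\bz_2|^2\le c|\bV_\varphi(\bz_1)-\bV_\varphi(\bz_2)|^2$.
\item For $1<p<2$, use the $(2M)^{p-2}$ bound together with $M^p\le\varphi(M)$, or equivalently H\"older's inequality with exponents $2/p$ and $2/(2-p)$.
\end{itemize}

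This is precisely the paper's proof, done at the level of averages rather than pointwise. It chooses $\bA$ with $\bV_\varphi(\bA)=(\bV_\varphi(D\bv))_{B_\varrho}$. It then bounds $\Xint-_{B_\varrho}|D\bv-\bA|^p$ by the $\bV_\varphi$-excess on $B_\varrho$; for $p<2$ the bound is its $p/2$-power times $(\sup_{B_\varrho}\varphi(|D\bv|))^{(2-p)/2}$. Finally it inserts the excess decay \eqref{eq: 2.25} directly. This yields $\tilde\alpha=\alpha_0/p$ for $p\ge2$ and $\tilde\alpha=\alpha_0/2$ for $p<2$, with no pointwise H\"older estimate and no Campanato step for $\bV_\varphi(D\bv)$.

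If you do want your stronger form with $M$ when $p\ge2$, argue as follows. Use $\varphi''(|\bz_1|+|\bz_2|)\gtrsim\varphi''(|\bz_1-\bz_2|)$, valid since $\varphi''$ is almost increasing there. This gives $\varphi(|\bz_1-\bz_2|)\lesssim(\varrho/R)^{2\alpha_0}\varphi(M)$. Then the upper index bound $t\varphi'(t)\le(p+\gamma)\varphi(t)$ yields $|\bz_1-\bz_2|\lesssim(\varrho/R)^{2\alpha_0/(p+\gamma)}M$.
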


\begin{proof}
    The result follows from \cite{diening2009everywhere}, applied with the choice
    \begin{equation*}
        \varphi(t):= t^p+a_0t^p\log^\gamma(e+t)
    \end{equation*}
    and with the vector field $\bV_\varphi$ defined in \eqref{eq: 2.7}. By Remark~\ref{rmk: 2.9}, $\varphi$ satisfies \cite[Assumption~2.2]{diening2009everywhere} with constants depending only on $p$ and $\gamma$, and this is the reason why the exponent $\tilde{\alpha}$ and the constants below are independent of $a_0$. Estimate \eqref{eq: 2.23} is then contained in \cite{diening2009everywhere}, and it remains to prove \eqref{eq: 2.24}. We first recall the excess decay estimate
    \begin{equation}    \label{eq: 2.25}
        \Xint-_{B_\varrho}|\bV_\varphi(D\bv)-(\bV_\varphi(D\bv))_{B_\varrho}|^2\,dx \leq c\left( \frac{\varrho}{R} \right)^{\alpha_0} \Xint-_{B_R}|\bV_\varphi(D\bv)-(\bV_\varphi(D\bv))_{B_R}|^2\,dx
    \end{equation}
    valid for every $\varrho \in (0,R)$, where $c\geq 1$ and $\alpha_0 \in (0,1)$ depend only on $n$, $N$, $p$ and $\gamma$, see \cite[Lemma~6.4]{diening2009everywhere}. 
    
    It suffices to prove \eqref{eq: 2.24} for $\varrho \leq R/2$. Since $\bV_\varphi$ is a bijection of $\mr^{N\times n}$, there exists $\mathbf{A} \in \mr^{N\times n}$ such that
    \begin{equation}    \label{eq: 2.26}
        \bV_\varphi(\bA)=(\bV_\varphi(D\bv))_{B_\varrho},
    \end{equation}
    that is, $\bA=\bV_\varphi^{-1}(\bV_\varphi(D\bv))_{B_{\varrho}}$. 
    
    We consider two cases. If $p\geq 2$, then \eqref{eq: 2.11}, \eqref{eq: 2.12} and \eqref{eq: 2.26}, together with the definition of $\bV_\varphi$ and a few elementary computations, give
    \begin{align*}
        & \Xint-_{B_\varrho}|D\bv-(D\bv)_{B_\varrho}|^p \,dx \leq c\Xint-_{B_\varrho}|D\bv-\bA|^p \,dx \\
        & \quad \leq c\Xint-_{B_\varrho}(|D\bv|+|\bA|)^{p-2}|D\bv-\bA|^2 \,dx \leq c\Xint-_{B_\varrho}\varphi''(|D\bv|+|\bA|)|D\bv-\bA|^2\,dx\\
        & \quad \leq c\Xint-_{B_\varrho}|\bV_{\varphi}(D\bv)-\bV_\varphi(\bA)|^2 \,dx = c\Xint-_{B_\varrho}|\bV_{\varphi}(D\bv)-(\bV_{\varphi}(D\bv))_{B_\varrho}|^2 \,dx\\
        & \quad \leq c\left( \frac{\varrho}{R} \right)^{\alpha_0}\Xint-_{B_R}|\bV_{\varphi}(D\bv)-(\bV_{\varphi}(D\bv))_{B_R}|^2 \,dx\\
        & \quad \leq c\left( \frac{\varrho}{R} \right)^{\alpha_0}\Xint-_{B_R}\left( |\bV_{\varphi}(D\bv)|^2+|(\bV_{\varphi}(D\bv))_{B_R}|^2 \right) \,dx\\
        & \quad \leq c\left( \frac{\varrho}{R} \right)^{\alpha_0}\Xint-_{B_R}(|D\bv|^p+a_0|D\bv|^p\log^{\gamma}(e+|D\bv|)) \,dx,
    \end{align*}
    where $c\equiv c(n,N,p,\gamma)$. Therefore, \eqref{eq: 2.24} holds with $\tilde{\alpha}:=\alpha_0/p$ if $p\geq 2$. If $1<p<2$, we argue slightly differently. By the triangle inequality and H\"older's inequality, we obtain
    \begin{align*}
        & \Xint-_{B_\varrho} |D\bv - (D\bv)_{B_\varrho}|^p \, dx \\
        & \leq 2^p \Xint-_{B_\varrho} (|D\bv| + |\bA|)^{\frac{p(p-2)}{2}} |D\bv - \bA|^p (|D\bv| + |\bA|)^{\frac{p(2-p)}{2}} \, dx \\
        & \leq c \left( \Xint-_{B_\varrho} (|D\bv| + |\bA|)^{p-2} |D\bv - \bA|^2 \, dx \right)^{\frac{p}{2}} \left( \Xint-_{B_\varrho} (|D\bv| + |\bA|)^p \, dx \right)^{\frac{2-p}{2}} \\
        & \leq c \left( \Xint-_{B_\varrho} \varphi''(|D\bv| + |\bA|) |D\bv - \bA|^2 \, dx \right)^{\frac{p}{2}} \left( \Xint-_{B_\varrho} (|\bV_\varphi(D\bv)|^2 + |\bV_\varphi(\bA)|^2) \, dx \right)^{\frac{2-p}{2}} \\
        & \leq c \left( \Xint-_{B_\varrho} |\bV_\varphi(D\bv) - \bV_\varphi(\bA)|^2 \, dx \right)^{\frac{p}{2}} \left( \Xint-_{B_\varrho} (|\bV_\varphi(D\bv)|^2 + |(\bV_\varphi(D\bv))_{B_\varrho}|^2) \, dx \right)^{\frac{2-p}{2}} \\
        & \leq c \left( \Xint-_{B_\varrho} |\bV_\varphi(D\bv) - (\bV_\varphi(D\bv))_{B_\varrho}|^2 \, dx \right)^{\frac{p}{2}} \sup_{B_\varrho} |\bV_\varphi(D\bv)|^{2-p} \\
        & \leq c \left( \frac{\varrho}{R} \right)^{\frac{\alpha_0 p}{2}} \left( \Xint-_{B_R} |\bV_\varphi(D\bv) - (\bV_\varphi(D\bv))_{B_R}|^2 \, dx \right)^{\frac{p}{2}} \left( \sup_{B_\varrho} \varphi(|D\bv|) \right)^{\frac{2-p}{2}}\\
        & \leq c \left( \frac{\varrho}{R} \right)^{\frac{\alpha_0 p}{2}} \left( \Xint-_{B_R} (|\bV_\varphi(D\bv)|^2 + |(\bV_\varphi(D\bv))_{B_R}|^2) \, dx \right)^{\frac{p}{2}}\\
        & \qquad \times \left( \Xint-_{B_R} (|D\bv|^p + a_0|D\bv|^p \log^\gamma(e+|D\bv|)) \, dx \right)^{\frac{2-p}{2}} \\
        & \leq c \left( \frac{\varrho}{R} \right)^{\frac{\alpha_0 p}{2}} \Xint-_{B_R} (|D\bv|^p + a_0|D\bv|^p \log^\gamma(e+|D\bv|)) \, dx.
    \end{align*}
    Therefore, \eqref{eq: 2.24} holds with $\tilde{\alpha}:=\alpha_0/2$ if $1<p<2$, and the proof is complete.
\end{proof}

\begin{lemma}    \label{thm: 2.14}
    There exists $\delta_0\equiv\delta_0(n,N,p)\in(0,1)$ with the following property. Let $B_{R/2}\subset\Omega$ be a ball, let $\bu\in W^{1,p(1+\delta)}(B_{R/2};\mr^N)$ for some $\delta\in(0,\delta_0]$, and let $\bv\in\bu+W_0^{1,p}(B_{R/2};\mr^N)$ be the minimizer of the functional
    \begin{equation*}
        \bw \mapsto \int_{B_{R/2}} |D\bw|^p\,dx, \qquad p>1,
    \end{equation*}
    in the Dirichlet class $\bu+W_0^{1,p}(B_{R/2};\mr^N)$. Then $D\bv \in L^{p(1+\delta)}(B_{R/2}; \mr^{N\times n})$ and
    \begin{equation*}
        \Xint-_{B_{R/2}}|D\bv|^{p(1+\delta)} \,dx \leq c \Xint-_{B_{R/2}}|D\bu|^{p(1+\delta)}\,dx,
    \end{equation*}
    where $c\equiv c(n,N,p)$.
\end{lemma}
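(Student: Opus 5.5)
This is a global higher integrability estimate for the $p$-harmonic Dirichlet problem with boundary datum $\bu\in W^{1,p(1+\delta)}$. I would prove it by a Gehring-type argument up to the boundary. Throughout I write $B:=B_{R/2}$.

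\textbf{Step 1 (reduction).} Extend $\bu$ to a function in $W^{1,p(1+\delta)}(\mr^n;\mr^N)$ on a neighbourhood; alternatively, work only with interior and boundary balls of $B$. Set $\bv=\bu$ outside $B$. Then $\bv\in W^{1,p}$ near $\bar B$ and $\bv-\bu$ vanishes outside $B$.

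\textbf{Step 2 (Caccioppoli inequalities).} For balls $B_r(y)$ with $B_{2r}(y)\subset B$, I would test minimality with $\bv-\eta^p(\bv-(\bv)_{B_{2r}})$, a standard cutoff $\eta$. This gives
\[
\Xint-_{B_r}|D\bv|^p\,dx\le c\,\Xint-_{B_{2r}}\Bigl|\frac{\bv-(\bv)_{B_{2r}}}{r}\Bigr|^p\,dx .
\]
For balls centred near or crossing $\partial B$, I would test with $\bv-\eta^p(\bv-\bu)$. This competitor is admissible because $\eta^p(\bv-\bu)\in W^{1,p}_0(B)$. One then obtains
\[
\Xint-_{B_r\cap B}|D\bv|^p\,dx\le c\,\Xint-_{B_{2r}\cap B}\Bigl|\frac{\bv-\bu}{r}\Bigr|^p\,dx+c\,\Xint-_{B_{2r}\cap B}|D\bu|^p\,dx .
\]
Both estimates use only the $p$-growth and convexity of $|\cdot|^p$ together with Lemma~\ref{lem: 2.12}, so the constants depend only on $n,N,p$.

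\textbf{Step 3 (reverse Hölder and Gehring).} In the interior case, Sobolev--Poincaré with exponent $p_*=\max\{1,np/(n+p)\}<p$ gives a reverse Hölder inequality with increasing support. In the boundary case, $\bv-\bu$ vanishes on $B_{2r}\setminus B$. Since $B$ is a ball, $|B_{2r}\setminus B|\ge c(n)|B_{2r}|$ whenever $B_r$ meets $\partial B$ (after enlarging radii suitably). Hence the Sobolev--Poincaré inequality for functions vanishing on a set of positive density applies to the zero extension of $\bv-\bu$. Combining the two cases, for $g:=|D\bv|^{p_*}\chi_B$, $f:=|D\bu|^{p_*}\chi_B$ and $s=p/p_*>1$, every ball $B_r$ satisfies
\[
\Xint-_{B_r}g^s\,dx\le c\Bigl(\Xint-_{B_{2r}}g\,dx\Bigr)^s+c\,\Xint-_{B_{2r}}f^s\,dx .
\]
Gehring's lemma in its Giaquinta--Modica form then yields $\delta_0=\delta_0(n,N,p)>0$. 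For $\delta\le\delta_0$ and $f\in L^{s(1+\delta)}$ it gives
\[
\Xint-_{B}|D\bv|^{p(1+\delta)}\,dx\le c\Bigl(\Xint-_{B}|D\bv|^p\,dx\Bigr)^{1+\delta}+c\,\Xint-_{B}|D\bu|^{p(1+\delta)}\,dx .
\]
To get this estimate on $B$ itself, I would apply the lemma on a slightly larger concentric ball with the extended function and then cover.

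\textbf{Step 4 (conclusion).} Minimality gives $\int_B|D\bv|^p\le\int_B|D\bu|^p$. Jensen's inequality then gives $(\Xint-_B|D\bu|^p)^{1+\delta}\le\Xint-_B|D\bu|^{p(1+\delta)}$, and the claim follows.

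The main obstacle is the boundary reverse Hölder inequality in Step 3. The density condition on the complement of the ball and the bookkeeping when applying Gehring across the boundary are what fix $\delta_0$ and keep the constants dependent only on $n,N,p$. I would handle this by working on $B_{R}$-scale balls centred anywhere, using the uniform exterior measure density of balls. Alternatively, one can cite the global higher integrability results for $p$-Laplace systems with Sobolev boundary data, e.g. Kinnunen--Zhou, Giusti's monograph Chapter~6 \cite{MR1962933}.
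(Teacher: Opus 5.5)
Your proposal is correct and takes essentially the same route the paper indicates. You use a Caccioppoli inequality up to the boundary for $\bv-\bu$, turned into a global reverse H\"older inequality via the uniform exterior density of the ball; then the global (Giaquinta--Modica) form of Gehring's lemma; and finally the minimality of $\bv$ together with Jensen's inequality. The paper only gives this outline with references to Giaquinta's work, so your write-up is a more detailed version of the same argument.
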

This is a classical consequence of a Caccioppoli type inequality up to the boundary for $\bv-\bu$, of the global version of Gehring's lemma and of the minimality of $\bv$, see \cite{giaquinta1979regularity} and \cite[Chapter~V]{giaquinta1983multiple}.

\section{Gehring's theory} \label{section 3}
We now turn to lower-order regularity results. The model functional $\plog$ in \eqref{eq: 1.2} enters the discussion only through the growth conditions of its integrand. Thus, we consider functionals of the form
\begin{equation}\label{eq: 3.1}
W^{1,1}(\Omega;\mr^N) \ni \bw \mapsto \mc{F}(\bw,\Omega)
:= \int_{\Omega}F(x,\bw,D\bw) \,dx,
\end{equation}
where $F:\Omega \times \mr^N \times \mr^{N \times n} \to \mr$ is a Carath\'eodory function satisfying the growth condition
\begin{equation}    \label{eq: 3.2}
    \nu H(x,\bz) \leq F(x,\bv, \bz) \leq L H(x,\bz)
\end{equation}
for all $x \in \Omega$, $\bv \in \mr^N$ and $\bz \in \mr^{N\times n}$, with $0 < \nu \leq 1 \leq L$. Here $H: \Omega \times \mr^{N \times n} \to \mr$ is defined by
\begin{equation}    \label{eq: 3.3}
    H(x,\bz)=|\bz|^p+a(x)|\bz|^p\log^\gamma(e+|\bz|).
\end{equation}
With a slight abuse of notation, we use $H(x,\bz)$ both for $\bz\in\mr^N$ and for $\bz\in\mr^{N\times n}$, the meaning being clear from the context.

All the results of this and of the next section are local. Since a local minimizer $\bu$ of $\plog$ satisfies $H(\cdot,D\bu)\in L^1_{\mathrm{loc}}(\Omega)$, and hence $\bu\in W^{1,p}_{\mathrm{loc}}(\Omega;\mr^N)$, after replacing $\Omega$ by an open subset $\Omega_0\Subset\Omega$ we may and shall assume in the following that $\bu\in W^{1,p}(\Omega;\mr^N)$ and $H(\cdot,D\bu)\in L^1(\Omega)$. Accordingly, $\|D\bu\|_{L^p}$ stands for $\|D\bu\|_{L^p(\Omega)}$ whenever it appears in the dependence of the constants.

Let $\omega(\cdot)$ denote the modulus of continuity of $a(\cdot)$ as in \eqref{eq: 1.6}. Without loss of generality, we assume that $\omega(\cdot)$ is concave. The natural condition for the basic regularity is
\begin{equation}\label{eq: 3.4}
\limsup_{r\to 0} \omega(r)\log^\gamma\left(\frac{1}{r}\right)<\infty,
\end{equation}
which can be equivalently expressed as
\begin{equation}\label{eq: 3.5}
\omega(r)\log^\gamma\left(\frac{1}{r}\right)\leq\tilde{L}
\end{equation}
for some constant $\tilde{L}\geq1$ and every $0<r\leq 1$.

The following lemma expresses the self-improving property of reverse H\"older inequalities, see \cite[\S 6.4]{MR1962933}.
\begin{lemma}  \label{lem: 3.1}
    Assume that a function $f \in L^1_{\mathrm{loc}}(\Omega)$ satisfies
    \begin{equation*}
        \Xint-_{B_{R/2}} |f| \,dx \leq \tilde{c}\left( \Xint-_{B_R} |f|^d \,dx \right)^{\frac{1}{d}}
    \end{equation*}
    for some exponent $d \in (0,1)$, some constant $\tilde{c}\geq 1$ and every ball $B_R \subset \Omega$ with radius $R \leq R_0$. Then there exists an exponent $\delta_g\equiv\delta_g(n,d,\tilde{c})>0$ such that $f \in L_{\mathrm{loc}}^{1+\delta_g}(\Omega)$ and
    \begin{equation*}
        \Xint-_{B_{R/2}}|f|^{1+\delta_g}\,dx \leq c(n,d,\tilde{c})\left( \Xint-_{B_R}|f|\,dx \right)^{1+\delta_g}
    \end{equation*}
    for every ball $B_R \subset \Omega$ with radius $R \leq R_0$.
\end{lemma}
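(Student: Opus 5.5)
The plan is to reduce Lemma~\ref{lem: 3.1} to the classical Gehring lemma with exponent pair $(1,d)$ in the form of Giaquinta--Modica, which is a statement about reverse H\"older inequalities with increasing supports. Set $g:=|f|^d$ and $s:=1/d>1$. The hypothesis then reads
\begin{equation*}
    \Xint-_{B_{R/2}} g^{s}\,dx \leq \tilde{c}\left( \Xint-_{B_R} g\,dx\right)^{s}
\end{equation*}
for every ball $B_R\subset\Omega$ with $R\leq R_0$. This is exactly the reverse H\"older inequality for $g$ with exponent $s$, with no additional term on the right.

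The first step is to check the assumptions of the Gehring--Giaquinta--Modica lemma in \cite[\S 6.4]{MR1962933}. We need $g\in L^s_{\mathrm{loc}}(\Omega)$, which holds because $g^s=|f|\in L^1_{\mathrm{loc}}$. We also need the inequality on all balls $B_R\subset\Omega$ with $R\le R_0$, which is given. The lemma then yields some $\varepsilon\equiv\varepsilon(n,s,\tilde c)>0$ such that $g\in L^{s+\varepsilon}_{\mathrm{loc}}$ and
\begin{equation*}
    \left(\Xint-_{B_{R/2}} g^{s+\varepsilon}\,dx\right)^{\frac{1}{s+\varepsilon}} \leq c\left(\Xint-_{B_R} g^{s}\,dx\right)^{\frac1s}.
\end{equation*}
Strictly, the classical statement has $B_{R/2}$ on the left and $B_R$ on the right. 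Any mismatch in the dilation factor, for instance $B_{R/4}$ versus $B_R$, is handled by a standard covering argument. One covers $B_{R/2}$ by finitely many balls $B_{R/8}(y_i)$, with $y_i\in B_{R/2}$ and a number of balls depending only on $n$, applies the estimate on each $B_{R/4}(y_i)\subset B_R$, and sums. Doubling constants depending only on $n$ are absorbed.

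The second step translates back to $f$. Define $\delta_g$ by $s+\varepsilon = (1+\delta_g)/d$, that is, $\delta_g=d\varepsilon$, so that $g^{s+\varepsilon}=|f|^{1+\delta_g}$ and $g^s=|f|$. Raising the previous inequality to the power $s+\varepsilon$ gives
\begin{equation*}
    \Xint-_{B_{R/2}}|f|^{1+\delta_g}\,dx \leq c\left(\Xint-_{B_R}|f|\,dx\right)^{\frac{s+\varepsilon}{s}} = c\left(\Xint-_{B_R}|f|\,dx\right)^{1+\delta_g}.
\end{equation*}
This is the claim, with $\delta_g$ and $c$ depending only on $n$, $d$ and $\tilde c$.

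The only genuinely nontrivial ingredient is Gehring's lemma itself. If one wanted it self-contained, the proof would be the usual Calder\'on--Zygmund argument. One first uses the reverse H\"older inequality on small cubes to obtain a level-set estimate of the form
\begin{equation*}
    \int_{\{g>t\}} g^s \leq c\, t^{s-1}\int_{\{g>t\}} g \qquad \text{for } t \text{ above the average},
\end{equation*}
and then applies the Gehring integration lemma. The main obstacle would be that the inequality is only available on balls of radius at most $R_0$ that are contained in $\Omega$, so a localization to a fixed ball is needed. Here I would simply invoke \cite[Theorem~6.7]{MR1962933}, which is stated precisely in this localized form, rather than redo it.
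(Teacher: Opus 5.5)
Your proposal is correct and takes essentially the paper's approach: the paper gives no argument of its own and simply cites the self-improving property of reverse H\"older inequalities in \cite[\S 6.4]{MR1962933}. Your substitution $g=|f|^d$, $s=1/d$ is the standard way to put the hypothesis into the form of Gehring's lemma there, and your back-translation $\delta_g=d\varepsilon$, with $(s+\varepsilon)/s=1+\delta_g$, is right.
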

We also recall the Poincar\'e inequality on balls, see \cite[\S 5.8.1, Theorem~2]{MR1625845}.
\begin{lemma}  \label{lem: 3.2}
    Let $1\leq p\leq \infty$, let $B_R \subset \mr^n$ be a ball and let $\bu \in W^{1,p}(B_R;\mr^N)$. Then there exists a constant $c$ depending only on $n$, $N$ and $p$ such that
    \begin{equation} \label{eq: 3.6}
        \int_{B_R}|\bu -(\bu)_{B_R}|^p\,dx\leq cR^p\int_{B_R}|D\bu|^p\,dx.
    \end{equation}
\end{lemma}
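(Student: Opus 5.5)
The statement just above is the classical Poincar\'e inequality on balls (Lemma~\ref{lem: 3.2}). We plan to prove it first on the unit ball and then transfer it to $B_R$ by scaling. The vector-valued case reduces to the scalar one. Indeed, $|\bu-(\bu)_{B_R}|^p\le N^{p}\sum_i|u^i-(u^i)_{B_R}|^p$ and $|Du^i|\le|D\bu|$, so it suffices to treat $N=1$, at the cost of a factor depending on $N$ and $p$. For $p=\infty$ the same argument gives the statement with the $\operatorname{ess\,sup}$ norm in place of the integrals, via the mean value theorem along segments in the convex set $B_R$.

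For scaling, set $v(y):=u(x_0+Ry)$ for $y\in B_1$. Then $(v)_{B_1}=(u)_{B_R}$, and
\begin{equation*}
\int_{B_R}|u-(u)_{B_R}|^p\,dx=R^n\int_{B_1}|v-(v)_{B_1}|^p\,dy,\qquad \int_{B_R}|Du|^p\,dx=R^{n-p}\int_{B_1}|Dv|^p\,dy.
\end{equation*}
Hence \eqref{eq: 3.6} for $R=1$ with a constant $c(n,p)$ yields the general case with the factor $R^p$.

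On $B_1$ with $1\le p<\infty$ we would argue by contradiction through compactness. Suppose there are $v_k\in W^{1,p}(B_1)$ with $(v_k)_{B_1}=0$, $\|v_k\|_{L^p}=1$ and $\|Dv_k\|_{L^p}\to0$. The sequence is then bounded in $W^{1,p}(B_1)$. The Rellich--Kondrachov theorem applies, since $B_1$ is a Lipschitz (indeed smooth) domain, so a subsequence converges in $L^p$ to some $v$ with $\|v\|_{L^p}=1$ and $(v)_{B_1}=0$. Lower semicontinuity, or equivalently convergence of distributional derivatives, gives $Dv=0$. Since $B_1$ is connected, $v$ is constant, and a zero average forces $v=0$, which contradicts $\|v\|_{L^p}=1$.

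The main obstacle is the compactness step: Rellich--Kondrachov on $B_1$ requires an extension operator or the Lipschitz boundary. If a constructive constant is preferred, we would instead use the potential estimate
\begin{equation*}
|v(x)-(v)_{B_1}|\le c(n)\int_{B_1}\frac{|Dv(y)|}{|x-y|^{n-1}}\,dy,
\end{equation*}
proved for smooth $v$ by integrating along segments using convexity of the ball. Young's convolution inequality then bounds the right-hand side in $L^p$, because the kernel $|x|^{1-n}$ is integrable on $B_2$. Finally, density of $C^\infty(\overline{B_1})$ in $W^{1,p}(B_1)$ extends the estimate to all of $W^{1,p}(B_1)$.
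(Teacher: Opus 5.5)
Your proposal is correct and follows essentially the same route as the paper. The paper gives no argument of its own but cites Evans (\S 5.8.1, Theorem~2), where the inequality is proved by exactly your compactness/contradiction argument on the unit ball via Rellich--Kondrachov and then transferred to $B_R$ by the scaling $x=x_0+Ry$; your componentwise reduction to $N=1$, norm-form treatment of $p=\infty$, and constructive potential-estimate alternative are all sound but optional additions.
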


The main result of this section is the following higher integrability theorem.
\begin{theorem}    \label{thm: 3.3}
    Let $\bu \in W^{1,p}(\Omega;\mr^N)$ be a vector-valued local minimizer of the functional $\mathcal{F}$ defined in \eqref{eq: 3.1}, and assume that the modulus of continuity $\omega(\cdot)$ of $a(\cdot)$ satisfies \eqref{eq: 3.5}. Then there exists $\delta_g>0$, depending only on $n$, $N$, $p$, $\nu$, $L$, $\|D\bu\|_{L^p}$, $\tilde{L}$ and $\gamma$, such that
    \begin{equation} \label{eq: 3.7}
        H(x,D\bu) \in L^{1+\delta_g}_{\mathrm{loc}}(\Omega).
    \end{equation}
    More precisely, for every ball $B_R\subset\Omega$ with $R\leq 1/e$, we have the local reverse H\"older inequality
    \begin{equation} \label{eq: 3.8}
        \left( \Xint-_{B_{R/2}} [H(x,D\bu)]^{1+\delta_g} \, dx \right)^{\frac{1}{1+\delta_g}} \leq c \Xint-_{B_R} H(x,D\bu) \, dx,
    \end{equation}
    where $c$ depends only on $n$, $N$, $p$, $\nu$, $L$, $\|D\bu\|_{L^p}$, $\tilde{L}$ and $\gamma$. In particular, $\bu$ is locally H\"older continuous provided that $p>n/(1+\delta_g)$.
\end{theorem}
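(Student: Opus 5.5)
We follow the standard route for double phase problems: Caccioppoli inequality, then a Sobolev--Poincaré reverse Hölder inequality with exponent $d<1$, then Lemma~\ref{lem: 3.1}. The only new point is controlling the oscillation of $a(\cdot)$ against the logarithm.

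\textbf{Caccioppoli inequality.} Fix $B_R\subset\Omega$ with $R\le 1/e$ and radii $R/2\le s<t\le R$. Take a cut-off $\eta$ with $\eta\equiv1$ on $B_s$, $\operatorname{supp}\eta\subset B_t$, $|D\eta|\le 2/(t-s)$, and test minimality with $\bw:=\bu-\eta(\bu-(\bu)_{B_R})$. Using \eqref{eq: 3.2}, \eqref{eq: 2.21} and the doubling of $H(x,\cdot)$ (from \eqref{eq: 2.20}, with constants depending on $p,\gamma$), we get
\begin{equation*}
\int_{B_s}H(x,D\bu)\,dx\le c\int_{B_t\setminus B_s}H(x,D\bu)\,dx+c\int_{B_t}H\Big(x,\frac{\bu-(\bu)_{B_R}}{t-s}\Big)dx .
\end{equation*}
Widman's hole filling and Lemma~\ref{lem: 2.12} (the factor $1/(t-s)$ is pulled out through \eqref{eq: 2.20}, producing powers $(R/(t-s))^{p+\gamma}$) then give
\begin{equation*}
\Xint-_{B_{R/2}}H(x,D\bu)\,dx\le c\,\Xint-_{B_R}H\Big(x,\frac{\bu-(\bu)_{B_R}}{R}\Big)dx .
\end{equation*}

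\textbf{Freezing the coefficient.} Set $a_i:=\inf_{B_R}a$ and $H_0(\bz):=|\bz|^p+a_i|\bz|^p\log^\gamma(e+|\bz|)$. By \eqref{eq: 1.6},
\begin{equation*}
H(x,\bz)\le H_0(\bz)+\omega(2R)|\bz|^p\log^\gamma(e+|\bz|)\quad\text{on } B_R .
\end{equation*}
The $H_0$ part is handled by Lemma~\ref{lem: 2.10}: $H_0$ satisfies Assumption~\ref{asum: 2.3} with constant independent of $a_i$ (Remark~\ref{rmk: 2.9}), so it is bounded by $c\,(\Xint-_{B_R}[H_0(D\bu)]^{d_1})^{1/d_1}\le c\,(\Xint-_{B_R}[H(x,D\bu)]^{d_1})^{1/d_1}$.

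\textbf{Main obstacle: the error term.} It remains to bound
\begin{equation*}
\omega(2R)\,\Xint-_{B_R}|g|^p\log^\gamma(e+|g|)\,dx,\qquad g:=\frac{\bu-(\bu)_{B_R}}{R}.
\end{equation*}
The plan has two steps.
\begin{itemize}
\item Bound the logarithm using the integrability of $D\bu$. By the Sobolev embedding, $g\in L^{p^*}$ (take $p^*$ any exponent $>p$ if $p\ge n$). Split according to whether $|g|\le R^{-\kappa}$ for a small $\kappa$. On that set, $\log^\gamma(e+|g|)\le c\log^\gamma(1/R)$, and \eqref{eq: 3.5} gives $\omega(2R)\log^\gamma(1/R)\le c\tilde L$. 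On the complement, use the $L^{p^*}$ bound together with the global bound $\Xint-_{B_R}|D\bu|^p\le R^{-n}\|D\bu\|_{L^p}^p$ to absorb the excess logarithm. This is where the dependence on $\|D\bu\|_{L^p}$ and $\tilde L$ enters.
\item Apply the Sobolev--Poincaré inequality with exponent below one to the $p$-part: $\Xint-|g|^p\le c(\Xint-|D\bu|^{pd})^{1/d}$ for some $d<1$ (for example $d=\max\{n/(n+p),1/p\}$ with a Sobolev--Poincaré inequality).
\end{itemize}
A cleaner alternative is to apply Lemma~\ref{lem: 2.10} directly to $\psi(t)=t^p\log^\gamma(e+t)$ and then estimate $\omega(2R)\log^\gamma(e+|D\bu|)$ through the same level splitting on $D\bu$ (Lemma~\ref{lem: 2.10} accepts an arbitrary $N$-function). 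If the level splitting does not close cleanly, the fallback is to first normalize so that $\Xint-_{B_R}|D\bu|^p\le R^{-n}\|D\bu\|^p_{L^p}$, which makes $\log(e+\text{level})\approx\log(1/R)$.

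\textbf{Conclusion.} Altogether we obtain
\begin{equation*}
\Xint-_{B_{R/2}}H(x,D\bu)\le \tilde c\,\Big(\Xint-_{B_R}[H(x,D\bu)]^{d}\Big)^{1/d}+\text{(lower order)}
\end{equation*}
with a common $d\in(0,1)$. Lemma~\ref{lem: 3.1} then gives \eqref{eq: 3.8} and hence \eqref{eq: 3.7}. Finally, if $p(1+\delta_g)>n$, Morrey's embedding applied to $D\bu\in L^{p(1+\delta_g)}_{\mathrm{loc}}$ gives local Hölder continuity of $\bu$.
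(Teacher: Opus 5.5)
Your Caccioppoli step and your treatment of the frozen part are the same as in the paper (Lemma~\ref{lem: 3.4}, and Lemma~\ref{lem: 2.10} with constants uniform in $a_0$ by Remark~\ref{rmk: 2.9}). The freezing itself is organized differently.

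\begin{itemize}
\item The paper distinguishes two cases. If $a_i(R)\le 2\omega(R)$, all of $a(\cdot)\le 4\omega(R)$ is treated as an error. If $a_i(R)>2\omega(R)$, then $a\le 2a_i(R)$ on $B_R$, so $H\le 2H_0$ with no error at all.
\item Your additive bound $H(x,\bz)\le H_0(\bz)+\omega(2R)|\bz|^p\log^\gamma(e+|\bz|)$ covers both cases at once. This is a legitimate simplification.
\end{itemize}

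However, everything then hinges on the estimate
\begin{equation*}
\omega(2R)\Xint-_{B_R}|g|^p\log^\gamma(e+|g|)\,dx\le c\Big(\Xint-_{B_R}|D\bu|^{q_*}\,dx\Big)^{\frac{p}{q_*}}\qquad\text{for some } q_*<p,
\end{equation*}
with $g=(\bu-(\bu)_{B_R})/R$. This is exactly what the paper proves in its first case.

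\textbf{The gap.} Your level splitting, as set up, does not establish this estimate.

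\begin{itemize}
\item \emph{The threshold is at the wrong height.} On $\{|g|>R^{-\kappa}\}$ you trade the logarithm for a power:
\[
|g|^p\log^\gamma(e+|g|)\le cR^{\kappa\epsilon}\log^\gamma(1/R)\,|g|^{p+\epsilon}.
\]
The surplus power has to be paid through the global bound $\Xint-_{B_R}|D\bu|^p\le cR^{-n}\|D\bu\|_{L^p}^p$, which costs $R^{-n\epsilon/p}$. The net factor is $\omega(2R)\log^\gamma(1/R)\,R^{\epsilon(\kappa-n/p)}$. Since \eqref{eq: 3.5} gives only logarithmic decay of $\omega$, this stays bounded only for $\kappa\ge n/p$. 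For the small $\kappa$ you propose it blows up as $R\to0$, and the same applies to your alternative splitting on $D\bu$.
\item The threshold must be at height $R^{-n/p}$, the largest value of $(\Xint-_{B_R}|g|^p)^{1/p}$ allowed by the global bound. This is what \eqref{eq: 3.12} encodes, by splitting $\log^\gamma(e+|g|)$ into $\log^\gamma(e+|g|^p/(|g|^p)_{B_R})$ and $\log^\gamma(1/R)$.
\item \emph{The Sobolev step on the large-level set starts from the wrong exponent.} You cannot use $g\in L^{p^*}$ derived from $D\bu\in L^p$. That leaves the full-power term $\Xint-_{B_R}|D\bu|^p$ on the right, which is not lower order and is not admitted by Lemma~\ref{lem: 3.1}. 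Presumably this is your ``(lower order)'' remainder.
\end{itemize}

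\textbf{The fix.} Use instead
\[
\Xint-_{B_R}|g|^{p+\epsilon}\le c\Big(\Xint-_{B_R}|D\bu|^{q_*}\Big)^{(p+\epsilon)/q_*}
\]
with $q_*<p$, which Sobolev--Poincar\'e provides for small $\epsilon$. Apply the global bound only to the factor $(\Xint-_{B_R}|D\bu|^{q_*})^{\epsilon/q_*}$. With $\kappa=n/p$ and this choice, both pieces are bounded by $c(\tilde L,\|D\bu\|_{L^p})(\Xint-_{B_R}|D\bu|^{q_*})^{p/q_*}$. The paper gets the same bound more compactly from \eqref{eq: 2.5} with some $q>p$, followed by Sobolev from $q_*<p$. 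The reverse H\"older inequality then holds with $d=\max\{d_1,q_*/p\}$ and no remainder, and Lemma~\ref{lem: 3.1} applies.
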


We begin with the following Caccioppoli type inequality. Note that the boundedness of $a(\cdot)$ is the only assumption needed here, and no continuity of $a(\cdot)$ is required.
\begin{lemma}    \label{lem: 3.4}
    Let $\bu \in W^{1,p}(\Omega; \mr^N)$ be a vector-valued local minimizer of the functional $\mc{F}$ defined in \eqref{eq: 3.1}, where $a(\cdot)$ is bounded and nonnegative. Then the inequality
    \begin{equation}    \label{eq: 3.9}
        \Xint-_{B_{R/2}} H(x,D\bu)\,dx \leq c \Xint-_{B_R}H\left( x, \frac{\bu-(\bu)_{B_R}}{R} \right)\,dx
    \end{equation}
    holds for every ball $B_R \subset \Omega$, with a constant $c \equiv c(n,N,p,\nu,L,\gamma)$.
\end{lemma}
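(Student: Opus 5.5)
We follow the standard hole-filling argument for Caccioppoli inequalities in the Orlicz setting. Fix a ball $B_R\subset\Omega$ and radii $R/2\leq t<s\leq R$. Take a cut-off $\eta\in C_c^\infty(B_s)$ with $0\leq\eta\leq1$, $\eta\equiv1$ on $B_t$ and $|D\eta|\leq 2/(s-t)$. Use the competitor $\bw:=\bu-\eta(\bu-(\bu)_{B_R})$. Then $\operatorname{supp}(\bu-\bw)\subset B_s\Subset\Omega$, and
\begin{equation*}
    D\bw=(1-\eta)D\bu-D\eta\otimes(\bu-(\bu)_{B_R}).
\end{equation*}
The competitor is admissible. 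Indeed, $H(x,\cdot)$ is doubling with constant $c(p,\gamma)$ uniformly in $x$ because $a\geq0$ is bounded, via \eqref{eq: 2.21}. Also $H(x,(\bu-(\bu)_{B_R})/(s-t))$ is integrable: the $p$-part is integrable since $\bu\in W^{1,p}$, and the logarithmic part is controlled if one first notes that, when the right-hand side of \eqref{eq: 3.9} is infinite, there is nothing to prove.

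By minimality and \eqref{eq: 3.2},
\begin{equation*}
    \nu\int_{B_t}H(x,D\bu)\,dx\leq\int_{B_s}F(x,\bu,D\bu)\,dx\leq\int_{B_s}F(x,\bw,D\bw)\,dx\leq L\int_{B_s}H(x,D\bw)\,dx.
\end{equation*}
Split the last integral using \eqref{eq: 2.21}, applied to $(1-\eta)|D\bu|$ and $|D\eta||\bu-(\bu)_{B_R}|$. Since $1-\eta=0$ on $B_t$ and $\log^\gamma(e+(1-\eta)x)\leq\log^\gamma(e+x)$, this gives
\begin{equation*}
    \int_{B_t}H(x,D\bu)\,dx\leq c_1\int_{B_s\setminus B_t}H(x,D\bu)\,dx+c_1\int_{B_s}H\left(x,\frac{2(\bu-(\bu)_{B_R})}{s-t}\right)dx,
\end{equation*}
with $c_1=c_1(p,\gamma,\nu,L)$.

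For the second term, write $\lambda:=R/(s-t)\geq1$ and use $H(x,\lambda\bz)\leq(2\lambda)^p\lambda^{\gamma}\cdot c\,H(x,\bz)$ via \eqref{eq: 2.20}. This yields the bound
\begin{equation*}
    c\left(\frac{R}{s-t}\right)^{p+\gamma}\int_{B_R}H\left(x,\frac{\bu-(\bu)_{B_R}}{R}\right)dx.
\end{equation*}
Then apply Widman's hole filling: add $c_1\int_{B_t}H(x,D\bu)\,dx$ to both sides and divide by $1+c_1$. This gives, with $h(r):=\int_{B_r}H(x,D\bu)\,dx$,
\begin{equation*}
    h(t)\leq\theta h(s)+\frac{cR^{p+\gamma}}{(s-t)^{p+\gamma}}\int_{B_R}H\left(x,\frac{\bu-(\bu)_{B_R}}{R}\right)dx,\qquad \theta=\frac{c_1}{1+c_1}<1.
\end{equation*}
Lemma~\ref{lem: 2.12}, with $\ell=p+\gamma$, $r_1=R/2$ and $r_2=R$, then yields $h(R/2)\leq c\int_{B_R}H(x,(\bu-(\bu)_{B_R})/R)\,dx$. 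Dividing by $|B_{R/2}|$ gives \eqref{eq: 3.9}. The constant depends only on $n,N,p,\nu,L,\gamma$, and no continuity of $a(\cdot)$ is used.

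The only delicate point is the scaling of $H$ under dilation by the factor $R/(s-t)$, since $H$ is not homogeneous. It is handled by \eqref{eq: 2.20}, which costs only the harmless extra power $\gamma$ in the exponent $\ell$. A minor point is the finiteness of $h$ and of the right-hand side, needed for Lemma~\ref{lem: 2.12}. Finiteness of $h$ follows from $H(\cdot,D\bu)\in L^1(\Omega)$, and if the right-hand side is infinite the claim is trivial.
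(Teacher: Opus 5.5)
Your proposal is correct and follows essentially the same route as the paper. You use the same competitor $\bu-\eta(\bu-(\bu)_{B_R})$, the splitting via \eqref{eq: 2.21} with hole filling, the dilation bound \eqref{eq: 2.20} that produces the exponent $\ell=p+\gamma$, and Lemma~\ref{lem: 2.12} on $[R/2,R]$. The only cosmetic difference is that you normalize directly by $R$, whereas the paper normalizes by $r_2-r_1$ and applies \eqref{eq: 2.20} once more at the end.
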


\begin{proof}
    Fix $r_1,r_2$ with $0<r_1<r_2\leq R$ and $r,s$ with $r_1\leq r<s\leq r_2$. Choose a cut-off function $\eta\in C_c^\infty(B_s)$ such that $0\leq\eta\leq1$, $\eta\equiv1$ on $B_r$ and $|D\eta|\leq c/(s-r)$, and set $\bv:=\bu-\eta(\bu-(\bu)_{B_R})$. By the minimality of $\bu$ and \eqref{eq: 3.2}, we obtain
    \begin{equation*}
        \int_{B_s} H(x,D\bu)\,dx \leq \frac{L}{\nu}\int_{B_s}H(x,D\bv)\,dx.
    \end{equation*}
    On $B_s \backslash B_r$, a direct computation based on \eqref{eq: 2.21} yields
    \begin{align*}
        H(x,D\bv)\leq cH(x,D\bu)+cH\left( x,\frac{c(\bu-(\bu)_{B_R})}{s-r} \right).
    \end{align*}
    Since $D\bv=0$ on $B_r$, we get
    $$
    \int_{B_r}H(x,D\bu)\,dx \leq c_*\int_{B_s\backslash B_r}H(x,D\bu)\,dx + c_*\int_{B_s}H\left( x,\frac{c(\bu-(\bu)_{B_R})}{s-r} \right)\,dx
    $$
    for a constant $c_*\equiv c_*(n,N, p,\gamma,\nu,L)$. Adding $c_*\int_{B_r}H(x,D\bu)\,dx$ to both sides, dividing by $c_*+1$ and using \eqref{eq: 2.20}, we obtain
    \begin{align*}
        & \int_{B_r}H(x,D\bu)\,dx \\
        & \leq \theta \int_{B_s} H(x,D\bu)\,dx + c \int_{B_s}H\left( x, \frac{\bu-(\bu)_{B_R}}{s-r} \right)\,dx\\
        &= \theta \int_{B_s} H(x,D\bu)\,dx \\
        & \quad +\frac{c}{|s-r|^p} \int_{B_s}|\bu-(\bu)_{B_R}|^p\left( 1+a(x)\log^\gamma \left( e+\left( \frac{r_2-r_1}{s-r} \right)\Big|\frac{\bu-(\bu)_{B_R}}{r_2-r_1}\Big| \right) \right)\,dx \\
        & \leq \theta \int_{B_s} H(x,D\bu)\,dx\\
        & \quad + c\left( \frac{r_2-r_1}{s-r} \right)^{p+\gamma} \int_{B_s}\Big|\frac{\bu-(\bu)_{B_R}}{r_2-r_1}\Big|^p\left( 1+a(x)\log^\gamma \left( e+\Big|\frac{\bu-(\bu)_{B_R}}{r_2-r_1}\Big| \right) \right)\,dx
    \end{align*}
    where $\theta:=c_*/(c_*+1) \in (0,1)$ and $c$ depends only on $n$, $N$, $p$, $\gamma$, $\nu$ and $L$. Finally, applying Lemma~\ref{lem: 2.12} with $\ell=p+\gamma$ to the function
    \begin{equation*}
        h(t)\equiv \int_{B_t}H(x,D\bu)\,dx, \qquad t\in[R/2,R],
    \end{equation*}
    and using \eqref{eq: 2.20} once more, we obtain \eqref{eq: 3.9}.
\end{proof}

\begin{proof}[Proof of Theorem~\ref{thm: 3.3}]
    It suffices to show that there exists an exponent $d \in (0,1)$, depending only on $n,N,p,\nu,L$ and $\gamma$, such that for every ball $B_R \subset \Omega$ with $R \leq 1/e$ we have
    \begin{equation}    \label{eq: 3.10}
        \Xint-_{B_{R/2}} H(x, D\bu)\,dx \leq c \left( \Xint-_{B_R}[H(x,D\bu)]^d \,dx\right)^{\frac{1}{d}},
    \end{equation}
    where $c$ depends only on $n,N,p,\nu,L,\|D\bu\|_{L^p},\tilde{L}$ and $\gamma$. Indeed, \eqref{eq: 3.8} then follows from Lemma~\ref{lem: 3.1} with $R_0=1/e$, and \eqref{eq: 3.7} follows by a standard covering argument. We distinguish two cases according to whether or not the inequality
    \begin{equation}    \label{eq: 3.11}
        a_i(R):= \inf_{B_R}a \leq 2 \omega(R)
    \end{equation}
    holds. We start with the case in which \eqref{eq: 3.11} holds. Then 
    \begin{equation*}
        a(x)=[a(x)-a_i(R)]+a_i(R) \leq \omega(2R)+a_i(R) \leq 2\omega(R)+a_i(R)\leq 4\omega(R)
    \end{equation*}
    for every $x \in B_R$, where we used the concavity of $\omega(\cdot)$ to obtain $\omega(2R)\leq2\omega(R)$. The same argument will be used repeatedly below. Using \eqref{eq: 2.19}, \eqref{eq: 2.20}, Lemma~\ref{lem: 3.2} and $R\leq 1/e$, we obtain
    \begin{equation}    \label{eq: 3.12}
        \begin{aligned}
            & \log^{\gamma}\left( e+\Big|\frac{\bu-(\bu)_{B_R}}{R}\Big| \right)\\
            & \quad \leq c\left[ 1+\log^\gamma \left( e+ \Big| \frac{\bu-(\bu)_{B_R}}{R} \Big|^p \right) \right]\\
            & \quad = c+c\log^\gamma \left( e+ \frac{|\bu-(\bu)_{B_R}|^p}{(|\bu-(\bu)_{B_R}|^p)_{B_R}}\cdot \frac{(|\bu-(\bu)_{B_R}|^p)_{B_R}}{R^p} \right)\\
            & \quad \leq c+c\log^{\gamma}\left( e+\frac{|\bu-(\bu)_{B_R}|^p}{(|\bu-(\bu)_{B_R}|^p)_{B_R}} \right)+c\log^{\gamma}\left( e+\frac{(|\bu-(\bu)_{B_R}|^p)_{B_R}}{R^p}  \right)\\
            & \quad \leq c+c\log^{\gamma}\left( e+\frac{|\bu-(\bu)_{B_R}|^p}{(|\bu-(\bu)_{B_R}|^p)_{B_R}} \right)+c\log^\gamma\left( e+\frac{\|D\bu\|_{L^p(B_R)}^p}{R^n} \right)\\
            & \quad \leq c\log^{\gamma}\left( e+\frac{|\bu-(\bu)_{B_R}|^p}{(|\bu-(\bu)_{B_R}|^p)_{B_R}} \right)+c\log^\gamma\left(\frac{1}{R}\right)
        \end{aligned}
    \end{equation}
    for some $c\equiv c(n,N,p,\|D\bu\|_{L^p},\gamma)$. Combining this inequality with \eqref{eq: 3.9} and with \eqref{eq: 2.5}, the latter applied with $\varrho=R$, $f \equiv |\bu-(\bu)_{B_R}|/R$ and $q/p$ in place of $q$, we deduce that for every $q>p$
    \begin{align*}
        & \Xint-_{B_{R/2}} H(x,D\bu)\,dx \\
        & \quad\leq c\Xint-_{B_R} H\left( x, \frac{\bu-(\bu)_{B_R}}{R} \right)\,dx \\
        & \quad \leq c \Xint-_{B_R} \Big|\frac{\bu-(\bu)_{B_R}}{R}\Big|^p +c \omega(R)\Big|\frac{\bu-(\bu)_{B_R}}{R}\Big|^p \log^{\gamma}\left( e+\Big|\frac{\bu-(\bu)_{B_R}}{R}\Big| \right)\,dx \\
        & \quad\leq c\left( 1+\omega(R)\log^\gamma \left( \frac{1}{R} \right) \right)\Xint-_{B_R}\Big|\frac{\bu-(\bu)_{B_R}}{R}\Big|^p\,dx\\
        & \quad \quad +c\omega(R)\Xint-_{B_R}\Big|\frac{\bu-(\bu)_{B_R}}{R}\Big|^p\log^\gamma\left( e+\frac{|\bu-(\bu)_{B_R}|^p}{(|\bu-(\bu)_{B_R}|^p)_{B_R}} \right)\,dx\\
        & \quad \leq c\left( 1+\omega(R)\log^\gamma \left( \frac{1}{R} \right) \right)\left[ \Xint-_{B_R} \Big|\frac{\bu-(\bu)_{B_R}}{R}\Big|^p \,dx +\left( \Xint-_{B_R}\Big|\frac{\bu-(\bu)_{B_R}}{R}\Big|^q\,dx\right)^{\frac{p}{q}}\right]
    \end{align*}
    for a constant $c$ depending only on $n,N,p,\nu,L,\|D\bu\|_{L^p}$ and $\gamma$. By the Sobolev embedding theorem, we can choose exponents $q_*$ and $q$ with $q_*<p<q$ such that
    \begin{equation*}
        \left( \Xint-_{B_R}\Big|\frac{\bu-(\bu)_{B_R}}{R}\Big|^q\,dx \right)^{\frac{p}{q}} \leq c\left( \Xint-_{B_R}|D\bu|^{q_*}\,dx\right)^{\frac{p}{q_*}}.
    \end{equation*}
    Combining this with \eqref{eq: 3.5} and H\"older's inequality, we obtain
    \begin{align*}
        & \Xint-_{B_{R/2}} H(x,D\bu)\,dx \\
        & \quad \leq c\Xint-_{B_R} \Big|\frac{\bu-(\bu)_{B_R}}{R}\Big|^p \,dx +c\left( \Xint-_{B_R}\Big|\frac{\bu-(\bu)_{B_R}}{R}\Big|^q\,dx\right)^{\frac{p}{q}}\\
        & \quad \leq c\left( \Xint-_{B_R}\Big|\frac{\bu-(\bu)_{B_R}}{R}\Big|^q\,dx\right)^{\frac{p}{q}}\\
        & \quad \leq c\left( \Xint-_{B_R}\Big|\frac{\bu-(\bu)_{B_R}}{R}\Big|^{q_*}\,dx\right)^{\frac{p}{q_*}}\\
        & \quad \leq c\left( \Xint-_{B_R}[H(x,D\bu)]^{\frac{q_*}{p}}\,dx\right)^{\frac{p}{q_*}}
    \end{align*}
    for a constant $c$ depending only on $n,N,p,\nu,L,\|D\bu\|_{L^p},\tilde{L}$ and $\gamma$.

    We next consider the case in which \eqref{eq: 3.11} fails, namely
    \begin{equation*}
        a_i(R)\equiv \inf_{B_R}a > 2\omega(R).
    \end{equation*}
    Then
    \begin{equation*}
        a(x)=[a(x)-a_i(R)]+a_i(R)\leq \omega(2R)+a_i(R) \leq 2\omega(R)+a_i(R)\leq 2a_i(R)
    \end{equation*}
    for every $x\in B_R$, and it follows from \eqref{eq: 3.9} that
    \begin{equation*}
        \Xint-_{B_{R/2}} H(x,D\bu)\,dx \leq c\Xint-_{B_R}\Big|\frac{\bu-(\bu)_{B_R}}{R}\Big|^p\left( 1+a_i(R)\log^\gamma\left( e+\Big|\frac{\bu-(\bu)_{B_R}}{R}\Big| \right) \right)\,dx.
    \end{equation*}
    Finally, we apply the Sobolev--Poincar\'e type inequality \eqref{eq: 2.18} with the choice 
    $$
    \varphi(t):=t^p+a_0t^p\log^{\gamma}(e+t),
    $$
    as in Remark~\ref{rmk: 2.9}. This yields
    \begin{align*}
        & \Xint-_{B_R}\Big|\frac{\bu-(\bu)_{B_R}}{R}\Big|^p\left( 1+a_0\log^\gamma\left( e+\Big|\frac{\bu-(\bu)_{B_R}}{R}\Big| \right) \right)\,dx\\
        & \qquad \qquad \leq c\left( \Xint-_{B_R}|D\bu|^{pd_1}(1+a_0\log^\gamma (e+|D\bu|))^{d_1} \right)^{\frac{1}{d_1}},
    \end{align*}
    where $d_1 \equiv d_1(n,N,p,\gamma) \in (0,1)$ and $c$ depends only on $n$, $N$, $p$ and $\gamma$ by Remark~\ref{rmk: 2.9}. Applying this inequality with $a_0=a_i(R)$, we deduce that
    \begin{align*}
        \Xint-_{B_{R/2}} H(x,D\bu)\,dx 
        & \leq c\left( \Xint-_{B_R}|D\bu|^{pd_1}(1+a_0\log^\gamma (e+|D\bu|))^{d_1} \right)^{\frac{1}{d_1}}\\
        & \leq c\left( \Xint-_{B_R}[H(x,D\bu)]^{d_1}\,dx \right)^{\frac{1}{d_1}}.
    \end{align*}
    This proves \eqref{eq: 3.10} with $d=\max\{d_1,q_*/p\}\in(0,1)$, and the proof is complete.
\end{proof}

\section{Proof of Theorem~\ref{thm: 1.2}}\label{section 4}
In this section, we prove the two assertions of Theorem~\ref{thm: 1.2}. In Subsection~\ref{subsection 4.2}, we first prove that
\begin{equation*}
    \bu \in C_{\mathrm{loc}}^{0,\beta}(\Omega;\mr^N) \quad \text{for every } \beta \in (0,1)
\end{equation*}
under the assumption $\ell=0$, where $\ell$ is defined in \eqref{eq: 1.7}. Then, in Subsection~\ref{subsection 4.3}, we prove that
$$
D\bu \in C_{\mathrm{loc}}^{0,\beta}(\Omega;\mr^{N\times n})
$$
for some $\beta\in(0,1)$ under the assumption that $a(\cdot)$ is H\"older continuous, that is, $\omega(r)\lesssim r^\sigma$ for some $\sigma\in(0,1)$.
 
\subsection{Comparison lemma and decay estimate} \label{subsection 4.1}
In Lemma~\ref{lem: 4.3} below, we estimate the difference between the energy of a minimizer of $\plog$ on a ball $B_{R/2}\equiv B_{R/2}(x_0)\subset\Omega$ and the corresponding frozen energy, obtained by replacing the coefficient $a(\cdot)$ by its infimum. More precisely, the latter energy is given by
\begin{equation}    \label{eq: 4.1}
    \int_{B_{R/2}}(|D\bu|^p+a_i(R)|D\bu|^p\log^\gamma(e+|D\bu|))\,dx,
\end{equation}
where
\begin{equation}    \label{eq: 4.2}
    a_i(R)\equiv a_i(x_0,R)=\inf_{B_R}a(\cdot).
\end{equation}

Before proceeding, we fix some notation and recall the weak form of the Euler--Lagrange equation of $\plog$, together with the class of admissible test functions.
\begin{remark}    \label{rmk: 4.1}
    For the remainder of the paper, we denote by $f,g:\mr^{N\times n}\to\mr$ the functions
    \begin{equation}    \label{eq: 4.3}
        f(\bz):=|\bz|^p \qquad \text{and} \qquad g(\bz):=|\bz|^p\log^\gamma(e+|\bz|).
    \end{equation}
\end{remark}

\begin{remark}    \label{rmk: 4.2}
    Let $\bu$ be a local minimizer of $\plog$ and let $\bm{\varphi} \in W^{1,p}(\Omega;\mr^N)$ be a function with compact support in $\Omega$ such that $D\bm{\varphi} \in L^p\log^\gamma L(\Omega; \mr^{N\times n})$. Then the Euler--Lagrange equation of $\plog$ holds in the weak form
    \begin{equation}    \label{eq: 4.4}
        \int_{\Omega}\langle \partial f(D\bu)+a(x)\partial g(D\bu),D\bm{\varphi} \rangle\,dx =0,
    \end{equation}
    see \cite[\S 5.1]{MR3570955}.
\end{remark}

\begin{lemma} \label{lem: 4.3}
    Let $\bu \in W^{1,p}(\Omega;\mr^N)$ be a vector-valued local minimizer of $\mc{P}_{\log^{\gamma}}$, let $B_R \equiv B_R(x_0) \subset \Omega$ be a ball with radius $R \leq 1/e$, and let $a_i(R)$ be as in \eqref{eq: 4.2}. Moreover, let $\omega(\cdot)$ be as in \eqref{eq: 1.6} and assume that \eqref{eq: 3.5} holds. Then for every $\tilde{\gamma}>0$ there exists a constant $c\equiv c(n,N,p,\|D\bu\|_{L^p},\tilde{L},\tilde{\gamma},\gamma)$ such that
    \begin{equation} \label{eq: 4.5}
        \begin{aligned}                                       
            &\int_{B_{R/2}}[a(x)-a_i(R)]^{\tilde{\gamma}}|D\mathbf{u}|^p \log^{\gamma \tilde{\gamma}}(e+|D\mathbf{u}|)\,dx \\
            &\hspace*{10em} \leq c \left[ \omega(R) \log^{\gamma}\left( \frac{1}{R} \right) \right]^{\tilde{\gamma}} \int_{B_R} H(x,D\mathbf{u})\,dx.
        \end{aligned}
    \end{equation}
\end{lemma}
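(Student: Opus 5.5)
The plan is to bound $a(x)-a_i(R)\le\omega(2R)\le 2\omega(R)$ on $B_{R/2}$. The key is then to split the logarithm into a part controlled by $\log(1/R)$ and a Zygmund-type part that is absorbed by higher integrability. First, since $a(x)-a_i(R)\le 2\omega(R)$ for every $x\in B_R$ (concavity of $\omega$), the left-hand side of \eqref{eq: 4.5} is bounded by
\begin{equation*}
c\,\omega(R)^{\tilde\gamma}\int_{B_{R/2}}|D\bu|^p\log^{\gamma\tilde\gamma}(e+|D\bu|)\,dx .
\end{equation*}
It therefore suffices to show
\begin{equation*}
\Xint-_{B_{R/2}}|D\bu|^p\log^{\gamma\tilde\gamma}(e+|D\bu|)\,dx\le c\log^{\gamma\tilde\gamma}\left(\frac1R\right)\Xint-_{B_R}H(x,D\bu)\,dx,
\end{equation*}
where $c$ depends only on the listed quantities, since $|B_{R/2}|\approx|B_R|$.

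Next, I would split the logarithm exactly as in \eqref{eq: 3.12}. Using \eqref{eq: 2.19} and \eqref{eq: 2.20},
\begin{equation*}
\log^{\gamma\tilde\gamma}(e+|D\bu|)\le c\log^{\gamma\tilde\gamma}\left(e+\frac{|D\bu|^p}{(|D\bu|^p)_{B_R}}\right)+c\log^{\gamma\tilde\gamma}\left(e+(|D\bu|^p)_{B_R}\right)+c .
\end{equation*}
Since $(|D\bu|^p)_{B_R}\le\|D\bu\|_{L^p}^pR^{-n}/|B_1|$ and $R\le1/e$, the second and third terms are at most $c\log^{\gamma\tilde\gamma}(1/R)$, with $c$ depending on $n,p,\|D\bu\|_{L^p},\gamma\tilde\gamma$. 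Their contribution is then bounded by $c\log^{\gamma\tilde\gamma}(1/R)\Xint-_{B_{R/2}}|D\bu|^p\,dx$, which has the desired form because $|D\bu|^p\le H(x,D\bu)$.

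For the Zygmund term, I would apply \eqref{eq: 2.5} with $f=|D\bu|$, $\varrho=R/2$, $\gamma\tilde\gamma$ in place of $\gamma$, and $q=1+\delta_g$, where $\delta_g$ comes from Theorem~\ref{thm: 3.3}. This gives
\begin{equation*}
\Xint-_{B_{R/2}}|D\bu|^p\log^{\gamma\tilde\gamma}\left(e+\frac{|D\bu|^p}{(|D\bu|^p)_{B_R}}\right)dx\le c\left(\Xint-_{B_{R/2}}|D\bu|^{p(1+\delta_g)}dx\right)^{\frac1{1+\delta_g}}.
\end{equation*}
Since $|D\bu|^p\le H(x,D\bu)$, the reverse H\"older inequality \eqref{eq: 3.8} bounds the right-hand side by $c\Xint-_{B_R}H(x,D\bu)\,dx$. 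Here $c$ depends on $\tilde L$ through $\delta_g$, and $\log^{\gamma\tilde\gamma}(1/R)\ge1$. Combining the two pieces yields \eqref{eq: 4.5}.

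The only delicate point is this last step. The argument relies on the normalized logarithm in \eqref{eq: 2.5} for the ratio $|D\bu|^p/(|D\bu|^p)_{B_R}$, and on the radius condition $R/2\le\varrho\le R$ needed there, which is satisfied with $\varrho=R/2$. Beyond that, it reduces to invoking the higher integrability of Theorem~\ref{thm: 3.3}. The dependence of $\delta_g$ on $\tilde L$, and hence the need for \eqref{eq: 3.5}, is what brings $\tilde L$ into the constant.
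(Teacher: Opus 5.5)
Your proposal is correct and follows essentially the same route as the paper's proof. Both arguments bound $a(x)-a_i(R)$ by $c\,\omega(R)$ and split the logarithm via \eqref{eq: 2.19} into the normalized Zygmund part and a $\log^{\gamma\tilde{\gamma}}(1/R)$ part. Both then control the Zygmund part by \eqref{eq: 2.5} with $\varrho=R/2$ and $q=1+\delta_g$, followed by the reverse H\"older inequality \eqref{eq: 3.8}.
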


\begin{proof}
    By \eqref{eq: 3.5}, Theorem~\ref{thm: 3.3} applies to $\bu$ and yields
    $$
    H(x,D\bu) \in L_{\mathrm{loc}}^{1+\delta_g}(\Omega).
    $$
    Here the exponent $\delta_g>0$ depends only on $n,N,p,\|D\bu\|_{L^p},\tilde{L}$ and $\gamma$, since $\nu=L=1$ for the functional $\plog$. Moreover, by \eqref{eq: 3.8} there exists a constant $c$ with the same dependence such that, for every ball $B_R\subset\Omega$ with $R\leq 1/e$,
    \begin{equation}    \label{eq: 4.6}
        \Xint-_{B_{R/2}}|D\bu|^{p+p\delta_g}\,dx \leq \Xint-_{B_{R/2}}[H(x,D\bu)]^{1+\delta_g}\,dx \leq c\left( \Xint-_{B_R}H(x,D\bu) \,dx \right)^{1+\delta_g}.
    \end{equation}
    Using \eqref{eq: 1.6} to estimate the left-hand side of \eqref{eq: 4.5}, we obtain
    \begin{equation*}
        \begin{aligned}
            &\int_{B_{R/2}}[a(x)-a_i(R)]^{\tilde{\gamma}}|D\bu|^p\log^{\gamma\tilde{\gamma}}(e+|D\bu|)\,dx\\
            & \qquad \leq c[\omega(R)]^{\tilde{\gamma}}\int_{B_{R/2}}|D\bu|^p\log^{\gamma\tilde{\gamma}}(e+|D\bu|^p)\,dx.
        \end{aligned}
    \end{equation*}
    To estimate the right-hand side above, we use \eqref{eq: 2.19} and \eqref{eq: 2.5}, the latter applied with $\varrho=R/2$, $q=1+\delta_g$ and with $\gamma\tilde{\gamma}$ in place of $\gamma$, to deduce that
    \begin{align*}
        & \int_{B_{R/2}}|D\bu|^p\log^{\gamma\tilde{\gamma}}(e+|D\bu|^p)\,dx\\
        & \quad \leq c\int_{B_{R/2}} |D\bu|^p\log^{\gamma\tilde{\gamma}}\left( e+\frac{|D\bu|^p}{(|D\bu|^p)_{B_R}} \right)\,dx \\
        & \hspace*{13em}+ c\log^{\gamma\tilde{\gamma}}(e+(|D\bu|^p)_{B_R}) \int_{B_{R/2}}|D\bu|^p\,dx\\
        & \quad \leq cR^n \left( \Xint-_{B_{R/2}} |D\bu|^{p(1+\delta_g)} \right)^{\frac{1}{1+\delta_g}}+c\log^{\gamma \tilde{\gamma}}\left( \frac{1}{R} \right)\int_{B_{R/2}}|D\bu|^p\,dx
    \end{align*}
    for a constant $c$ depending on $n,N,p,\|D\bu\|_{L^p},\gamma$ and $\tilde{\gamma}$. Combining this with the reverse H\"older inequality \eqref{eq: 4.6} and with $R\leq 1/e$, we obtain
    \begin{equation*} 
        \begin{aligned}
            & \int_{B_{R/2}}|D\bu|^p\log^{\gamma\tilde{\gamma}}(e+|D\bu|^p)\,dx\\
            & \quad \leq cR^n \Xint-_{B_R}H(x,D\bu)\,dx +c\log^{\gamma \tilde{\gamma}}\left( \frac{1}{R} \right) \int_{B_{R/2}}H(x,D\bu)\,dx\\
            & \quad \leq c\log^{\gamma \tilde{\gamma}}\left( \frac{1}{R} \right) \int_{B_{R}}H(x,D\bu)\,dx,
        \end{aligned}
    \end{equation*}
    where the constant $c$ depends on $n,N,p,\tilde{L},\|D\bu\|_{L^p},\gamma$ and $\tilde{\gamma}$. Multiplying the last estimate by $[\omega(R)]^{\tilde{\gamma}}$ and recalling the first estimate of the proof, we obtain \eqref{eq: 4.5}.
\end{proof}

The following comparison lemma estimates the difference between a minimizer of $\plog$ and the minimizer of the corresponding frozen functional. We allow the possibility that $a_i(R)=0$. In this case, the frozen functional reduces to the $p$-Dirichlet energy, and only the vector field $\bV_p$ defined in \eqref{eq: 2.13} enters the estimate.

\begin{lemma}    \label{lem: 4.4}
    Let $\bu \in W^{1,p}(\Omega;\mr^N)$ be a vector-valued local minimizer of $\plog$, let $B_R\equiv B_R(x_0) \Subset \Omega$ be a ball with $R \leq 1/e$, and let $a_i(R)$ be as in \eqref{eq: 4.2}. Moreover, let $\omega(\cdot)$ be as in \eqref{eq: 1.6} and assume that \eqref{eq: 3.5} holds. Let $\bv \in W^{1,p}(B_{R/2};\mr^N)$ be the solution of the Dirichlet problem
    \begin{equation} \label{eq: 4.7}
        \begin{cases}
            \bv \mapsto \underset{\bw}{\min} \int_{B_{R/2}}(|D\bw|^p+a_i(R)|D\bw|^p\log^\gamma(e+|D\bw|))\,dx \\
            \bw \in \bu + W_0^{1,p}(B_{R/2};\mr^N).
        \end{cases}
    \end{equation}
    Then there exists a constant $c\equiv c(n,N,p,\gamma, \|D\bu\|_{L^p},\tilde{L})$ such that 
    \begin{equation}  \label{eq: 4.8}
        \begin{aligned}
            &\int_{B_{R/2}}(|\bV_p(D\bu)-\bV_p(D\bv)|^2+a_i(R)|\bV_{\log^\gamma}(D\bu)-\bV_{\log^\gamma}(D\bv)|^2)\,dx \\
            &\hspace*{10em} \leq c \omega(R)\log^{\gamma}\left( \frac{1}{R} \right)\int_{B_R}H(x,D\bu)\,dx.
        \end{aligned}
    \end{equation}
\end{lemma}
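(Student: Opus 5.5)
The plan is to follow the energy comparison scheme of \cite{MR3570955}. I would first convert the left-hand side of \eqref{eq: 4.8} into a difference of frozen energies, and then control that difference through the minimality of $\bu$ and Lemma~\ref{lem: 4.3}. Throughout, write $a_0:=a_i(R)$ and $\mc{P}_0$ for the frozen functional \eqref{eq: 2.22} on $B_{R/2}$, with $f,g$ as in \eqref{eq: 4.3}.

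\textbf{Step 1: the monotonicity identity.} The minimizer $\bv$ of \eqref{eq: 4.7} exists by the direct method, and $\mc{P}_0(\bv)\le\mc{P}_0(\bu)<\infty$. It satisfies the Euler--Lagrange equation $\int_{B_{R/2}}\langle\partial f(D\bv)+a_0\partial g(D\bv),D\bm\varphi\rangle\,dx=0$. Here the test function $\bm\varphi=\bu-\bv\in W^{1,p}_0$ is admissible, since $D\bm\varphi\in L^p\log^\gamma L$ whenever $a_0>0$. Convexity of $f$ and $g$ gives
\begin{equation*}
\mc{P}_0(\bu)-\mc{P}_0(\bv)=\int_{B_{R/2}}\int_0^1\big\langle \partial f(D\bv+t D\bm\varphi)-\partial f(D\bv)+a_0[\partial g(D\bv+tD\bm\varphi)-\partial g(D\bv)],D\bm\varphi\big\rangle\,dt\,dx .
\end{equation*}
By Lemma~\ref{lem: 2.5}, applied separately to $f$ and to $g$, each integrand is comparable to $t^{-1}|\bV(D\bv+tD\bm\varphi)-\bV(D\bv)|^2$. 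Lemma~\ref{lem: 2.6}, together with the equivalence $|\bV_\varphi(\bz_1)-\bV_\varphi(\bz_2)|^2\approx\varphi''(|\bz_1|+|\bz_2|)|\bz_1-\bz_2|^2$ (the standard two-sided form from \cite{diening2008fractional}), then yields, after integrating in $t$ and using that $\varphi''(|\bz_1|+|\bz_2|)\approx\varphi''(|\bz_1|+|\bz_1+t(\bz_2-\bz_1)|)$ for $t\ge1/2$, that the left-hand side of \eqref{eq: 4.8} is bounded by $c[\mc{P}_0(\bu)-\mc{P}_0(\bv)]$. The constants are uniform in $a_0$ because $g$ satisfies Assumption~\ref{asum: 2.3} with $c_\varphi(p,\gamma)$; see Remark~\ref{rmk: 2.9}. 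Alternatively, I would use the known estimate for $\int(F(D\bu)-F(D\bv))$ via the first variation directly.

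\textbf{Step 2: comparing energies.} Since $a_0\le a(x)$ and $\bv$ is an admissible competitor for $\bu$ in $B_{R/2}$, we have
\begin{align*}
\mc{P}_0(\bu)-\mc{P}_0(\bv)&\le \mc{P}_{\log^\gamma}(\bu,B_{R/2})-\int_{B_{R/2}}[f(D\bv)+a_0g(D\bv)]\,dx-\int_{B_{R/2}}(a(x)-a_0)g(D\bu)\,dx\\
&\le \int_{B_{R/2}}(a(x)-a_0)\big[g(D\bv)-g(D\bu)\big]\,dx
\le \int_{B_{R/2}}(a(x)-a_0)\,g(D\bv)\,dx .
\end{align*}
The difficulty is that this involves $D\bv$, not $D\bu$. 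A cleaner route is to work with $\bu$-terms only: start from $\mc{P}_0(\bu)-\mc{P}_0(\bv)\le \int(a-a_0)[g(D\bv)-g(D\bu)]$ and split according to whether $\mc{P}_0(\bv)$-level quantities are controlled by $\bu$.

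\textbf{Step 3: the main obstacle.} The hardest step is bounding $\int_{B_{R/2}}(a-a_0)g(D\bv)\,dx$ by $c\,\omega(R)\log^\gamma(1/R)\int_{B_R}H(x,D\bu)\,dx$. For this I need higher integrability of $D\bv$ in terms of $D\bu$, which I plan to obtain from a global Gehring argument for the frozen problem, in the spirit of Lemma~\ref{thm: 2.14} but for $\varphi(t)=t^p+a_0t^p\log^\gamma(e+t)$. The boundary Caccioppoli inequality for $\bv-\bu$ and the Sobolev--Poincar\'e inequality of Lemma~\ref{lem: 2.10} have constants depending only on $(n,N,p,\gamma)$. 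Together they give $\Xint-_{B_{R/2}}\varphi(|D\bv|)^{1+\delta}\le c\Xint-_{B_{R/2}}\varphi(|D\bu|)^{1+\delta}$ for small $\delta\le\delta_g$. I then repeat the proof of Lemma~\ref{lem: 4.3} with $\bv$ in place of $\bu$, using $a-a_0\le 2\omega(R)$, the splitting \eqref{eq: 2.19}, and \eqref{eq: 2.5} with $q=1+\delta$. This produces a term $cR^n(\Xint-\varphi(|D\bv|)^{1+\delta})^{1/(1+\delta)}$, and also a term $c\log^\gamma(e+(|D\bv|^p)_{B_{R/2}})\int|D\bv|^p$. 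The latter is at most $c\log^\gamma(1/R)\int H(x,D\bu)$, since $(|D\bv|^p)_{B_{R/2}}\le cR^{-n}\|D\bu\|_{L^p}^p$ by minimality. The former is bounded via the reverse H\"older inequality \eqref{eq: 4.6} by $c\int_{B_R}H(x,D\bu)$. Multiplying by $\omega(R)$ and using $R\le1/e$ gives \eqref{eq: 4.8}. If the boundary Gehring step fails to be uniform in $a_0$, I would fall back on the case split \eqref{eq: 3.11}. When $a_0>2\omega(R)$, one has $(a-a_0)g\le (2\omega(R)/a_0)\,a_0g$, and this can be absorbed using $\mc{P}_0(\bv)\le\mc{P}_0(\bu)$ together with Lemma~\ref{lem: 4.3}. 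When $a_0\le 2\omega(R)$, one uses the $L^p$ higher integrability of Lemma~\ref{thm: 2.14}-type for $\bv$.
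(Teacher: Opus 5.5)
Your Steps 1--2 are correct, and they give a reduction genuinely different from the paper's. You bound the left-hand side of \eqref{eq: 4.8} by $c[\mc{P}_0(\bu)-\mc{P}_0(\bv)]$, and then, using minimality of $\bu$ against $\bv$, by $\int_{B_{R/2}}(a-a_i(R))\,g(D\bv)\,dx$. The cost is that the coefficient oscillation now multiplies $g(D\bv)$. You therefore need higher integrability of $D\bv$ up to $\partial B_{R/2}$, uniformly in $a_i(R)$.

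The paper never needs this. It tests the \emph{difference} of the two Euler--Lagrange equations, using \eqref{eq: 4.4} for $\bu$, with $\bu-\bv$. This gives $\mc{D}_2=\int_{B_{R/2}}[a_i(R)-a(x)]\langle\partial g(D\bu),D\bu-D\bv\rangle\,dx$, where the oscillation multiplies only $\partial g(D\bu)$. Young's inequality with a weight $A$ then splits $|\mc{D}_2|$ into two parts:
\begin{itemize}
\item $A^{-1/(p-1)}\int|a-a_i(R)|^{p'}|D\bu|^p\log^{\gamma p'}(e+|D\bu|)$, handled by Lemma~\ref{lem: 4.3} with $\tilde\gamma=p'$, which uses only interior higher integrability of $\bu$;
\item $A\int(|D\bu|+|D\bv|)^p$, handled by \eqref{eq: 4.10}.
\end{itemize}
The choice $A=\omega(R)\log^\gamma(1/R)$ balances the two. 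So the paper needs only Theorem~\ref{thm: 3.3} for $\bu$, plus Lemma~\ref{thm: 2.14} when $a_i(R)=0$ just to make $\bu-\bv$ admissible. Your route avoids the Euler--Lagrange equation of $\bu$, using only its minimality. In exchange it requires a boundary regularity theory for the frozen problem.

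As written, that load-bearing step is asserted rather than proved. The uniform estimate $\Xint-_{B_{R/2}}\varphi(|D\bv|)^{1+\delta}\le c\,\Xint-_{B_{R/2}}\varphi(|D\bu|)^{1+\delta}$, for $\varphi(t)=t^p+a_0t^p\log^\gamma(e+t)$, can be proved. The ingredients are a boundary Caccioppoli inequality for $\bv-\bu$, a version of Lemma~\ref{lem: 2.10} for functions vanishing on a fixed fraction of the ball (the lemma itself is the mean-value version), and the Giaquinta--Modica form of Gehring's lemma. All of these have constants depending only on $c_\varphi(p,\gamma)$. Still, it is a separate theorem you would have to supply, and it is exactly what the paper's argument is designed to avoid.

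There are two further problems.
\begin{itemize}
\item The claim ``$(|D\bv|^p)_{B_{R/2}}\le cR^{-n}\|D\bu\|_{L^p}^p$ by minimality'' is false for $a_i(R)>0$. Minimality only gives $\int|D\bv|^p\le\int(|D\bu|^p+a_i(R)g(D\bu))$. Your constant therefore depends on $\int_\Omega H(x,D\bu)\,dx$ rather than on $\|D\bu\|_{L^p}$. This is harmless for Theorem~\ref{thm: 1.2}, but it is not the stated dependence.
\item Your fallback does not work. In the case $a_i(R)>2\omega(R)$ it yields only $(2\omega(R)/a_i(R))\,\mc{P}_0(\bu)$. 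When $2\omega(R)<a_i(R)\ll\log^{-\gamma}(1/R)$, this is of order $\int_{B_R}H(x,D\bu)\,dx$, not $\omega(R)\log^\gamma(1/R)\int_{B_R}H(x,D\bu)\,dx$. Moreover, Lemma~\ref{lem: 4.3} concerns $\bu$, not $\bv$, so it cannot absorb this term. In the case $0<a_i(R)\le2\omega(R)$, Lemma~\ref{thm: 2.14} covers only $a_i(R)=0$, so you are back to the same boundary higher-integrability problem.
\end{itemize}
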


\begin{proof}
    We first observe that problem \eqref{eq: 4.7} admits a minimizer, since
    \begin{equation}    \label{eq: 4.9}
        D\bu \in L^{p(1+\delta_g)}(B_R) \subset L^p\log^\gamma L(B_R;\mr^{N\times n})
    \end{equation}
    by \eqref{eq: 3.7} and $B_R \Subset \Omega$. Hence the Dirichlet class $\bu +W_0^{1,p}(B_{R/2};\mr^N)$ contains a function with finite energy. By the minimality of $\bv$ we have
    \begin{equation}    \label{eq: 4.10}
        \begin{aligned}
            &\int_{B_{R/2}}(|D\bv|^p+a_i(R)|D\bv|^p\log^{\gamma}(e+|D\bv|))\,dx \\
            & \qquad \leq \int_{B_{R/2}} (|D\bu|^p+a_i(R)|D\bu|^p\log^{\gamma}(e+|D\bu|))\,dx.
        \end{aligned}
    \end{equation}
    Consequently, we have
    \begin{equation}    \label{eq: 4.11}
        D\bv \in L^p\log^\gamma L(B_{R/2};\mr^{N\times n}).
    \end{equation}
    Indeed, for $a_i(R)>0$ this follows from \eqref{eq: 4.10}, whose right-hand side is finite by \eqref{eq: 4.9}, whereas for $a_i(R)=0$ it follows from Lemma~\ref{thm: 2.14}, applied with $\delta:=\min\{\delta_g,\delta_0\}$, together with \eqref{eq: 4.9}. Since both $\bu$ and $\bv$ are minimizers, we may use their Euler--Lagrange equations, see Remark~\ref{rmk: 4.2}. Recalling the notation in Remark~\ref{rmk: 4.1}, we note that $\bu-\bv \in W_0^{1,p}(B_{R/2};\mr^N)$ is an admissible test function, since 
    $$
    D\bu-D\bv \in L^p\log^\gamma L(B_{R/2};\mr^{N\times n})
    $$
    by \eqref{eq: 4.9} and \eqref{eq: 4.11}. Testing both equations with $\bu-\bv$, we obtain
    \begin{align*}
        \mc{D}_1
        & := \int_{B_{R/2}}\langle \partial f(D\bu)-\partial f(D\bv)+a_i(R)(\partial g(D\bu)-\partial g(D\bv)),D\bu-D\bv \rangle\,dx\\
        & = \int_{B_{R/2}}\langle \partial f(D\bu)+a_i(R)\partial g(D\bu),D\bu-D\bv \rangle\,dx\\
        & = \int_{B_{R/2}}[a_i(R)-a(x)]\langle \partial g(D\bu),D\bu-D\bv \rangle\,dx=:\mc{D}_2,
    \end{align*}
    where we used the Euler--Lagrange equation of $\bv$ in the second line and \eqref{eq: 4.4} in the third one. Then \eqref{eq: 2.10} applied to $\bV_p$ and $\bV_{\log^{\gamma}}$ yields
    \begin{equation}    \label{eq: 4.12}
        \int_{B_{R/2}}(|\bV_p(D\bu)-\bV_p(D\bv)|^2+a_i(R)|\bV_{\log^{\gamma}}(D\bu)-\bV_{\log^{\gamma}}(D\bv)|^2)\,dx \leq c|\mc{D}_2|,
    \end{equation}
    where $c \equiv c(n,N,p,\gamma)$. We estimate the integrand of $\mc{D}_2$ by Young's inequality. For every $A>0$ we have
    \begin{align*}
        & |[a_i(R)-a(x)]\langle \partial g(D\bu),D\bu-D\bv \rangle|\\
        & \qquad \leq |a_i(R)-a(x)| |\partial g(D\bu)|(|D\bu|+|D\bv|)\\
        & \qquad \leq \frac{c}{A^{\frac{1}{p-1}}}|a(x)-a_i(R)|^{p'}|\partial g(D\bu)|^{p'}+cA(|D\bu|+|D\bv|)^p\\
        & \qquad \leq \frac{c}{A^{\frac{1}{p-1}}}|a(x)-a_i(R)|^{p'}|D\bu|^p\log^{\gamma p'}(e+|D\bu|)+cA(|D\bu|+|D\bv|)^p,
    \end{align*}
    where $c \equiv c(n,N,p,\gamma)$.
    Combining this with Lemma~\ref{lem: 4.3} for $\tilde{\gamma}=p'$, the bound $\log^{\gamma}(1/R)\geq 1$, the minimality property \eqref{eq: 4.10} and the inequality $a_i(R)\leq a(x)$ in $B_{R/2}$, we obtain
    \begin{equation}    \label{eq: 4.13}
        \begin{aligned}
            |\mc{D}_2|
            & \leq \int_{B_{R/2}} |[a_i(R)-a(x)]\langle \partial g(D\bu),D\bu-D\bv \rangle| \,dx\\
            & \leq \frac{c}{A^{\frac{1}{p-1}}}\int_{B_{R/2}}|a(x)-a_i(R)|^{p'}|D\bu|^p\log^{\gamma p'}(e+|D\bu|)\,dx\\
            & \qquad\qquad+ cA\int_{B_{R/2}}(|D\bu|+|D\bv|)^p\,dx\\
            & \leq \frac{c}{A^{\frac{1}{p-1}}}\left[ \omega(R)\log^{\gamma}\left( \frac{1}{R} \right) \right]^{p'}\int_{B_R}H(x,D\bu)\,dx \\
            & \qquad \qquad +cA \int_{B_{R/2}} (|D\bu|^p+a_i(R)|D\bu|^p\log^\gamma(e+|D\bu|))\,dx\\
            & \leq \left[cA+\frac{c}{A^{\frac{1}{p-1}}} \left( \omega(R)\log^{\gamma}\left( \frac{1}{R} \right) \right)^{p'} \right]\int_{B_R}H(x,D\bu)\,dx
        \end{aligned}
    \end{equation}
    where $c$ depends on $n,N,p,\|D\bu\|_{L^p},\tilde{L}$ and $\gamma$. If $\omega(R)=0$, then $a\equiv a_i(R)$ in $B_R$, so that $\mc{D}_2=0$ and \eqref{eq: 4.8} follows from \eqref{eq: 4.12}. Otherwise we choose $A=\omega(R)\log^{\gamma}(1/R)>0$ in \eqref{eq: 4.13} and invoke \eqref{eq: 4.12}, which gives \eqref{eq: 4.8} and completes the proof.
\end{proof}

The comparison lemma yields the following decay estimate for minimizers of $\plog$.

\begin{lemma} \label{lem: 4.5}
    Let $\bu \in W^{1,p}(\Omega;\mr^N)$ be a vector-valued local minimizer of $\plog$, let $B_R \equiv B_R(x_0) \Subset \Omega$ be a ball with radius $R\leq 1/e$, and let $a_i(R)$ be as in \eqref{eq: 4.2}. Moreover, let $\omega(\cdot)$ be as in \eqref{eq: 1.6} and assume that \eqref{eq: 3.5} holds. Then the inequality
    \begin{equation} \label{eq: 4.14}
        \int_{B_\varrho}H(x,D\bu)\,dx \leq c_d\left[ \left( \frac{\varrho}{R} \right)^n+\omega(R)\log^{\gamma}\left( \frac{1}{R} \right) \right] \int_{B_R}H(x,D\bu)\,dx
    \end{equation}
    holds for every $0<\varrho\leq R$, where $B_\varrho\equiv B_\varrho(x_0)$, for a constant $c_d$ depending only on $n$, $N$, $p$, $\|D\bu\|_{L^p}$, $\tilde{L}$ and $\gamma$.   
\end{lemma}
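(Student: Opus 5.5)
It suffices to treat $\varrho\leq R/4$, since for $R/4<\varrho\leq R$ the estimate \eqref{eq: 4.14} holds trivially with $c_d\geq 4^n$. Fix $\varrho\leq R/4$ and let $\bv$ be the frozen minimizer from Lemma~\ref{lem: 4.4}. The strategy is to split $H(x,D\bu)$ on $B_\varrho$ into a frozen part and an oscillation part:
\begin{equation*}
    H(x,D\bu)=\big(|D\bu|^p+a_i(R)g(D\bu)\big)+[a(x)-a_i(R)]\,g(D\bu).
\end{equation*}
The oscillation part is handled directly by Lemma~\ref{lem: 4.3} with $\tilde{\gamma}=1$. Its integral over $B_\varrho\subset B_{R/2}$ is bounded by $c\,\omega(R)\log^\gamma(1/R)\int_{B_R}H(x,D\bu)\,dx$.

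For the frozen part we compare $D\bu$ with $D\bv$. Set $\varphi_0(t):=t^p+a_i(R)t^p\log^\gamma(e+t)$, which is an $N$-function satisfying Assumption~\ref{asum: 2.3} with $c_{\varphi_0}=c_{\varphi_0}(p,\gamma)$ by Remark~\ref{rmk: 2.9}. The growth of $\varphi_0$ gives
\begin{equation*}
    \varphi_0(|D\bu|)\leq c\,\varphi_0(|D\bv|)+c\,\varphi_0(|D\bu-D\bv|).
\end{equation*}
For the first term we use the sup bound \eqref{eq: 2.23} for $\bv$ on $B_{R/4}$ (applied on the ball $B_{R/2}$), followed by the minimality \eqref{eq: 4.10}. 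This yields
\begin{equation*}
    \int_{B_\varrho}\varphi_0(|D\bv|)\,dx\leq c\,\varrho^n\sup_{B_{R/4}}\varphi_0(|D\bv|)\leq c\left(\frac{\varrho}{R}\right)^n\int_{B_{R/2}}\varphi_0(|D\bu|)\,dx\leq c\left(\frac{\varrho}{R}\right)^n\int_{B_R}H(x,D\bu)\,dx,
\end{equation*}
using $a_i(R)\leq a(x)$. The constant is uniform in $a_i(R)$ by Theorem~\ref{thm: 2.13}.

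The main obstacle is the difference term: we must bound $\int_{B_{R/2}}\varphi_0(|D\bu-D\bv|)\,dx$ by the left-hand side of \eqref{eq: 4.8}, or by something controlled by it.

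\emph{Case $p\geq2$.} Lemma~\ref{lem: 2.6} gives
\begin{equation*}
    \varphi_0(|D\bu-D\bv|)\leq c\,\varphi_0''(|D\bu|+|D\bv|)\,|D\bu-D\bv|^2\leq c\,|\bV_{\varphi_0}(D\bu)-\bV_{\varphi_0}(D\bv)|^2.
\end{equation*}
Here we use $\varphi_0(t)\approx t^2\varphi_0''(t)$ and the monotonicity of $\varphi_0''$. The latter holds since $p\geq2$, up to constants, via $t^{p-2}\log^\gamma(e+t)$; if needed, replace $\varphi_0''(s)$ by $\inf_{\tau\geq s}$, which is comparable. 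The right-hand side is in turn comparable to the integrand in \eqref{eq: 4.8}, since $|\bV_{\varphi_0}(\bz_1)-\bV_{\varphi_0}(\bz_2)|^2$ is comparable to the monotonicity expression \eqref{eq: 2.10}, and that expression is additive in the two summands of $\varphi_0$.

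\emph{Case $1<p<2$.} Here the pointwise bound fails, so we use the Hölder trick from the proof of Theorem~\ref{thm: 2.13}. Writing $\Phi:=|D\bu|+|D\bv|$, we have
\begin{equation*}
    \varphi_0(|D\bu-D\bv|)\lesssim \big[\varphi_0''(\Phi)|D\bu-D\bv|^2\big]^{p/2}\,\big[\varphi_0(\Phi)\big]^{1-p/2}.
\end{equation*}
Integrating and applying Hölder's inequality with exponents $2/p$ and $2/(2-p)$, the first factor is controlled by the left-hand side of \eqref{eq: 4.8}, hence by $\omega(R)\log^\gamma(1/R)\int_{B_R}H$. The second factor is bounded by $\int_{B_R}H$ via \eqref{eq: 4.10}. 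This produces $[\omega(R)\log^\gamma(1/R)]^{p/2}$ instead of the first power, which is weaker than \eqref{eq: 4.14}. To get the stated linear power I would instead avoid the difference term: bound $\int_{B_\varrho}\varphi_0(|D\bu|)$ through the $\bV$-quantities, using $|\bV(D\bu)|^2\leq 2|\bV(D\bv)|^2+2|\bV(D\bu)-\bV(D\bv)|^2$ together with \eqref{eq: 2.12}. This is linear in the comparison error for all $p$, and the $|\bV(D\bv)|^2$ term is handled by the sup estimate exactly as above. I would in fact use this route for both cases.

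Summing the frozen and oscillation contributions gives \eqref{eq: 4.14}, with $c_d$ depending only on $n,N,p,\|D\bu\|_{L^p},\tilde{L},\gamma$.
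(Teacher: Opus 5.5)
Your final route is correct and is exactly the paper's proof: split off $[a(x)-a_i(R)]g(D\bu)$ via Lemma~\ref{lem: 4.3} with $\tilde\gamma=1$; bound the frozen part by the $\bV$-quantities through \eqref{eq: 2.12} and the elementary inequality $|\bV(D\bu)|^2\le 2|\bV(D\bv)|^2+2|\bV(D\bu)-\bV(D\bv)|^2$; control the $D\bv$ term by the sup estimate \eqref{eq: 2.23} together with the minimality \eqref{eq: 4.10}; and control the difference term linearly by Lemma~\ref{lem: 4.4}. The preliminary detour through $\varphi_0(|D\bu-D\bv|)$ is unnecessary, and as you noticed it loses the linear power of $\omega(R)\log^\gamma(1/R)$ when $1<p<2$, so you can drop it entirely.
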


\begin{proof}
    It suffices to prove \eqref{eq: 4.14} for $\varrho \leq R/4$, since for $\varrho>R/4$ the estimate is trivial. By Lemma~\ref{lem: 4.3} with $\tilde{\gamma}=1$ we have
    \begin{equation}    \label{eq: 4.15}
        \begin{aligned}
            \int_{B_{\varrho}} H(x,D\bu)\,dx 
            & \leq \int_{B_{\varrho}} (|D\bu|^p+a_i(R)|D\bu|^p\log^{\gamma}(e+|D\bu|))\,dx \\
            & \qquad + \int_{B_{R/2}} (a(x)-a_i(R))|D\bu|^p\log^{\gamma}(e+|D\bu|)\,dx \\
            & \leq \int_{B_{\varrho}} (|D\bu|^p+a_i(R)|D\bu|^p\log^{\gamma}(e+|D\bu|))\,dx \\
            & \qquad + c\omega(R)\log^{\gamma}\left( \frac{1}{R} \right)\int_{B_R} H(x,D\bu)\,dx
        \end{aligned}
    \end{equation}
    for some constant $c$ depending only on $n,N,p,\|D\bu\|_{L^p},\tilde{L}$ and $\gamma$. To estimate the first term on the right-hand side of \eqref{eq: 4.15}, we consider, as in Lemma~\ref{lem: 4.4}, the function $\bv$ defined in \eqref{eq: 4.7}. Applying \eqref{eq: 2.12} to $\varphi(t)=t^p$ and to $\varphi(t)=t^p\log^{\gamma}(e+t)$ separately, and recalling the definition \eqref{eq: 2.14} of $\bV_{\log^{\gamma}}$, we obtain
    \begin{equation}    \label{eq: 4.16}
        \begin{aligned}
            & \int_{B_{\varrho}} (|D\bu|^p+a_i(R)|D\bu|^p\log^{\gamma}(e+|D\bu|))\,dx\\
            & \qquad \leq c\int_{B_\varrho} (|\bV_p(D\bu)|^2+a_i(R)|\bV_{\log^{\gamma}}(D\bu)|^2)\,dx\\
            & \qquad \leq c\int_{B_\varrho} (|\bV_p(D\bv)|^2+a_i(R)|\bV_{\log^{\gamma}}(D\bv)|^2)\,dx\\
            & \qquad \qquad +c\int_{B_{R/2}}(|\bV_p(D\bu)-\bV_p(D\bv)|^2+a_i(R)|\bV_{\log^{\gamma}}(D\bu)-\bV_{\log^{\gamma}}(D\bv)|^2)\,dx\\
            & \qquad =: \mathrm{I}+\mathrm{II},
        \end{aligned}
    \end{equation}
    for a constant $c$ depending only on $n,p$ and $\gamma$. By the supremum estimate \eqref{eq: 2.23} for the minimizer $\bv$ of the frozen functional and the minimality of $\bv$, we have
    \begin{align*}
        \frac{1}{|B_\varrho|} \cdot \mathrm{I}
        & \leq c \Xint-_{B_\varrho} (|D\bv|^p+a_i(R)|D\bv|^p\log^{\gamma}(e+|D\bv|))\,dx\\
        & \leq c\sup_{B_\varrho}\left( |D\bv|^p+a_i(R)|D\bv|^p\log^{\gamma}(e+|D\bv|) \right)\\
        & \leq c\Xint-_{B_{R/2}} \left( |D\bv|^p+a_i(R)|D\bv|^p\log^{\gamma}(e+|D\bv|) \right)\,dx \\
        & \leq c\Xint-_{B_{R/2}} \left( |D\bu|^p+a_i(R)|D\bu|^p\log^{\gamma}(e+|D\bu|) \right)\,dx \\
        & \leq c\Xint-_{B_R} \left( |D\bu|^p+a_i(R)|D\bu|^p\log^{\gamma}(e+|D\bu|) \right)\,dx\\
        & \leq c\Xint-_{B_R}H(x,D\bu)\,dx
    \end{align*}
    for a constant $c$ depending only on $n$, $N$, $p$ and $\gamma$. Finally, we estimate $\mathrm{II}$ by Lemma~\ref{lem: 4.4}, and combining the resulting bound with the preceding display we arrive at \eqref{eq: 4.14}.
\end{proof}

\subsection{H\"older continuity of minima} \label{subsection 4.2}
In this subsection, we prove the first assertion of Theorem~\ref{thm: 1.2}. We consider a vector-valued local minimizer $\bu\in W^{1,p}(\Omega;\mr^N)$ of the functional $\plog$ defined in \eqref{eq: 1.2} and show that $\bu\in C_{\mathrm{loc}}^{0,\beta}(\Omega;\mr^N)$ for every $\beta\in(0,1)$, provided that $\ell=0$. More precisely, with $\omega(\cdot)$ the modulus of continuity of $a(\cdot)$ as in \eqref{eq: 1.6}, we assume that
\begin{equation} \label{eq: 4.17}
\limsup_{r\to 0}\omega(r)\log^{\gamma}\left(\frac{1}{r}\right)=0.
\end{equation}

We first recall the definition of Campanato spaces and the integral characterization of H\"older continuity, which are the main tools of the proof. For a bounded open set $\Omega'\subset\mr^n$, $x_0\in\Omega'$ and $\varrho>0$ we write $\Omega'(x_0,\varrho):=\Omega'\cap B_\varrho(x_0)$.
\begin{definition}\label{def:4.6}
    Let $\Omega'\subset\mr^n$ be a bounded open set, $p\geq1$ and $\lambda\geq0$. The Campanato space $\mc{L}^{p,\lambda}(\Omega';\mr^N)$ is the space of all $\bu \in L^p(\Omega';\mr^N)$ such that
    \begin{equation*}
        [\bu]_{\mc{L}^{p,\lambda}(\Omega';\mr^N)}^p:=\sup_{\substack{x_0 \in \Omega'\\ \varrho>0}}\varrho^{-\lambda}\int_{\Omega'(x_0,\varrho)}|\bu-(\bu)_{\Omega'(x_0,\varrho)}|^p\,dx<+\infty,
    \end{equation*}
    endowed with the norm $\|\bu\|_{\mc{L}^{p,\lambda}(\Omega';\mr^N)}:=[\bu]_{\mc{L}^{p,\lambda}(\Omega';\mr^N)}+\|\bu\|_{L^p(\Omega';\mr^N)}$.
\end{definition}

The following is Campanato's integral characterization of H\"older continuity, see \cite[Theorem~5.5]{MR3099262}.
\begin{theorem}    \label{thm: 4.7}
    Let $\Omega'\subset\mr^n$ be a ball, let $n<\lambda \leq n+p$ and $\beta = (\lambda-n)/p$. Then
    \begin{equation*}
        \mc{L}^{p,\lambda}(\Omega' ; \mr^N) \cong C^{0,\beta}(\overline{\Omega'};\mr^N),
    \end{equation*}
    and the seminorm
    \begin{equation*}
        [\bu]_{C^{0,\beta}(\Omega';\mr^N)}:= \sup_{x,y \in \Omega', x\neq y} \frac{|\bu(x)-\bu(y)|}{|x-y|^{\beta}}
    \end{equation*}
    is equivalent to $[\bu]_{\mc{L}^{p,\lambda}(\Omega';\mr^N)}$, with constants depending only on $n$, $N$, $p$ and $\lambda$, and in particular independent of the radius of $\Omega'$.
\end{theorem}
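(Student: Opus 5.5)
The plan is to prove the two inclusions separately, with all constants controlled through one geometric fact. Since $\Omega'$ is a ball, there is a constant $A\equiv A(n)>0$ such that $|\Omega'(x_0,\varrho)|\geq A\varrho^n$ for every $x_0\in\Omega'$ and every $0<\varrho\leq \operatorname{diam}\Omega'$. By scaling, $A$ does not depend on the radius of $\Omega'$, and this is the source of the radius-independence claimed in the statement.

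\emph{Hölder implies Campanato.} If $\bu\in C^{0,\beta}(\overline{\Omega'};\mr^N)$, then $|\bu(x)-(\bu)_{\Omega'(x_0,\varrho)}|\leq [\bu]_{C^{0,\beta}}(2\varrho)^\beta$ for every $x\in\Omega'(x_0,\varrho)$. Integrating gives
\begin{equation*}
\int_{\Omega'(x_0,\varrho)}|\bu-(\bu)_{\Omega'(x_0,\varrho)}|^p\,dx\leq c\,\varrho^{n+\beta p}[\bu]^p_{C^{0,\beta}} .
\end{equation*}
Since $\lambda=n+\beta p$, this is the Campanato bound. For $\varrho>\operatorname{diam}\Omega'$ the set $\Omega'(x_0,\varrho)$ is all of $\Omega'$, and the same bound holds because $|\bu-(\bu)_{\Omega'}|\leq[\bu]_{C^{0,\beta}}(\operatorname{diam}\Omega')^\beta$.

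\emph{Campanato implies Hölder.} Write $\bu_{x,r}:=(\bu)_{\Omega'(x,r)}$.
\begin{enumerate}[label=\upshape{(\arabic*)}]
\item For $0<r\leq \operatorname{diam}\Omega'$, estimate the oscillation of averages on nested sets. Averaging $|\bu_{x,r}-\bu_{x,r/2}|^p\leq 2^{p-1}\bigl(|\bu-\bu_{x,r}|^p+|\bu-\bu_{x,r/2}|^p\bigr)$ over $\Omega'(x,r/2)$ and using the lower measure bound gives
\begin{equation*}
|\bu_{x,r}-\bu_{x,r/2}|\leq c\,A^{-1/p}\,r^{(\lambda-n)/p}[\bu]_{\mc{L}^{p,\lambda}}=c\,r^\beta[\bu]_{\mc{L}^{p,\lambda}}.
\end{equation*}
\item Telescope along $r_k=2^{-k}r$. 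Because $\beta>0$, the resulting geometric series converges, so $\bu_{x,r_k}$ is Cauchy. Its limit $\tilde\bu(x)$ coincides with $\bu(x)$ at every Lebesgue point, so it defines a representative of $\bu$. The same sum gives the key bound $|\tilde\bu(x)-\bu_{x,r}|\leq c\,r^\beta[\bu]_{\mc{L}^{p,\lambda}}$, with $c$ depending only on $n,N,p,\lambda$. Applying this at all dyadic radii also shows that the convergence is uniform in $x$.
\item For $x\neq y$ in $\Omega'$, set $r=|x-y|$. Write
\begin{equation*}
|\tilde\bu(x)-\tilde\bu(y)|\leq|\tilde\bu(x)-\bu_{x,2r}|+|\bu_{x,2r}-\bu_{y,2r}|+|\bu_{y,2r}-\tilde\bu(y)|.
\end{equation*}
The outer terms are controlled by step (2). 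For the middle term, note that $\Omega'(x,2r)\cap\Omega'(y,2r)\supset\Omega'(x,r)$, which has measure at least $Ar^n$ (truncating $2r$ at $\operatorname{diam}\Omega'$ if necessary). Averaging over this intersection, as in step (1), bounds the middle term by $c\,r^\beta[\bu]_{\mc{L}^{p,\lambda}}$.
\end{enumerate}
Together these give $[\tilde\bu]_{C^{0,\beta}}\leq c[\bu]_{\mc{L}^{p,\lambda}}$. The $L^\infty$ bound needed for the full norm equivalence follows from step (2) with $r=\operatorname{diam}\Omega'$ together with $|(\bu)_{\Omega'}|\leq |\Omega'|^{-1/p}\|\bu\|_{L^p}$.

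\emph{Expected obstacle.} The only delicate point is the uniform lower density estimate $|\Omega'\cap B_\varrho(x_0)|\geq A\varrho^n$ for centres near $\partial\Omega'$, with $A$ independent of the radius. For a ball this follows from elementary geometry: $\Omega'\cap B_\varrho(x_0)$ contains a ball of radius comparable to $\varrho$. After rescaling $\Omega'$ to the unit ball, it reduces to a statement with a purely dimensional constant. The Campanato seminorm is also scale-consistent with the Hölder seminorm under this rescaling, since both scale like $R^{\beta}$ when $\lambda=n+\beta p$. This is what makes all constants independent of the radius.
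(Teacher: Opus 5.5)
Your proposal is correct and matches the paper's route. The paper does not reprove the result: it cites the classical Campanato characterization, whose standard proof is your averaging and telescoping scheme driven by the uniform lower density bound $|\Omega'(x_0,\varrho)|\geq A\varrho^n$, and it gets radius-independence of the constants from the same scaling observation you make (both seminorms scale like $R^{\beta}$). Your version additionally spells out that $A$ depends only on $n$ for a ball, and it correctly handles radii larger than the diameter, which the paper's definition of the Campanato seminorm allows.
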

The independence of the constants of the radius of $\Omega'$ follows by scaling, since both seminorms are homogeneous of degree $-\beta$ with respect to dilations.

We now derive a Morrey type decay estimate from Lemma~\ref{lem: 4.5}.
\begin{lemma}    \label{lem: 4.8}
    Assume \eqref{eq: 4.17}. For every $\delta \in (0,n)$, there exist a constant $c$, depending only on $n$, $N$, $p$, $\|D\bu\|_{L^p}$, $\tilde{L}$, $\delta$ and $\gamma$, and a radius
    \begin{equation}    \label{eq: 4.18}
        R_0 \equiv R_0(n,N,p,\|D\bu\|_{L^p},\tilde{L},\delta,\gamma,\omega(\cdot)) \leq \frac{1}{e},
    \end{equation}
    such that the decay estimate
    \begin{equation}    \label{eq: 4.19}
        \int_{B_\varrho}H(x,D\bu)\,dx \leq c \left( \frac{\varrho}{R} \right)^{n-\delta}\int_{B_R}H(x,D\bu)\,dx
    \end{equation}
    holds for every pair of concentric balls $B_\varrho\subset B_R \Subset \Omega$ with $0<\varrho \leq R \leq R_0$.
\end{lemma}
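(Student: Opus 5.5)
The plan is to combine the decay estimate of Lemma~\ref{lem: 4.5} with the iteration Lemma~\ref{lem: 2.11}. Assumption \eqref{eq: 4.17} implies \eqref{eq: 3.5} for some $\tilde{L}\geq1$: a finite $\limsup$ gives a bound near $0$, and on the remaining range of $r\le 1$ we use that $\omega$ is bounded, since $a(\cdot)$ is bounded. Hence Lemma~\ref{lem: 4.5} applies on every ball $B_R\Subset\Omega$ with $R\leq 1/e$, with a constant $c_d$ depending only on $n,N,p,\|D\bu\|_{L^p},\tilde{L},\gamma$. Fix $\delta\in(0,n)$ and let $\bar{\varepsilon}=\bar{\varepsilon}(n,\delta,c_d)$ be the threshold provided by Lemma~\ref{lem: 2.11} with $\tilde{c}=c_d$.

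Next I choose $R_0$. By \eqref{eq: 4.17} there is $R_0\le 1/e$, depending on $\omega(\cdot)$ and on the quantities entering $\bar{\varepsilon}$, such that
\[
\omega(r)\log^\gamma(1/r)\leq\bar{\varepsilon}\quad\text{for all } 0<r\leq R_0 .
\]
Now fix a center $x_0$ and a radius $\tilde{R}\leq R_0$ with $B_{\tilde{R}}(x_0)\Subset\Omega$, and set $\phi(\varrho):=\int_{B_\varrho(x_0)}H(x,D\bu)\,dx$. This function is nondecreasing and nonnegative on $(0,\tilde{R}]$. For $0<\varrho\leq R\leq\tilde{R}$, the ball $B_R(x_0)\subset B_{\tilde{R}}(x_0)\Subset\Omega$ and $R\le 1/e$. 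So \eqref{eq: 4.14} gives
\[
\phi(\varrho)\leq c_d\Big[(\varrho/R)^n+\omega(R)\log^\gamma(1/R)\Big]\phi(R)\leq c_d\Big[(\varrho/R)^n+\bar{\varepsilon}\Big]\phi(R).
\]
This is exactly the hypothesis of Lemma~\ref{lem: 2.11} with $\varepsilon=\bar{\varepsilon}$. The lemma then yields \eqref{eq: 4.19} with $c=\bar{c}(n,\delta,c_d)$, which has the claimed dependence.

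The only delicate point is bookkeeping: $\bar{\varepsilon}$ must be fixed before $R_0$, and only through $c_d$ and $\delta$. Otherwise the choice of $R_0$ would be circular. This works because $c_d$ depends only on the structural data and $\tilde{L}$, not on $R$ or $x_0$.

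A second point is how the statement is applied to arbitrary concentric balls $B_\varrho\subset B_R\Subset\Omega$ with $R\le R_0$. There we simply take $\tilde{R}=R$ in the argument above. The Dependence of $R_0$ on $\omega(\cdot)$ enters only through the rate in \eqref{eq: 4.17}, as asserted in \eqref{eq: 4.18}.
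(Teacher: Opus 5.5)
Your proposal is correct and is essentially the paper's own proof. You fix $\bar{\varepsilon}$ from Lemma~\ref{lem: 2.11} with $\tilde{c}=c_d$, use \eqref{eq: 4.17} to choose $R_0\le 1/e$ with $\omega(R)\log^{\gamma}(1/R)\le\bar{\varepsilon}$ for $R\le R_0$, and iterate \eqref{eq: 4.14} with $\phi(\varrho)=\int_{B_\varrho(x_0)}H(x,D\bu)\,dx$ and $\tilde{R}=R$. Your side remark that \eqref{eq: 4.17} yields \eqref{eq: 3.5} is a harmless clarification that the paper takes for granted.
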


\begin{proof}
    Let $\bar{\varepsilon}\equiv\bar{\varepsilon}(n,\delta,c_d)>0$ be the constant provided by Lemma~\ref{lem: 2.11} with $\tilde{c}=c_d$, where $c_d$ is the constant appearing in Lemma~\ref{lem: 4.5}. In view of \eqref{eq: 4.17}, we can choose $R_0\in(0,1/e]$ such that
    \begin{equation*}
        \omega(R)\log^{\gamma}\left( \frac{1}{R} \right) \leq \bar{\varepsilon} \qquad \text{for every } R \leq R_0.
    \end{equation*}
    By Lemma~\ref{lem: 4.5} and the previous choice of $R_0$, whenever $0<\varrho \leq R \leq R_0$ and $B_R\Subset\Omega$ we have
    \begin{align*}
        \int_{B_\varrho}H(x,D\bu)\,dx
        & \leq c_d\left[ \left( \frac{\varrho}{R} \right)^n+\omega(R)\log^{\gamma}\left( \frac{1}{R } \right) \right] \int_{B_R}H(x,D\bu)\,dx \\
        & \leq c_d \left[ \left( \frac{\varrho}{R} \right)^n+\bar{\varepsilon} \right] \int_{B_R}H(x,D\bu)\,dx.
    \end{align*}
    Applying Lemma~\ref{lem: 2.11} with the choice $\phi(\varrho)\equiv \int_{B_\varrho(x_0)}H(x,D\bu)\,dx$, where $x_0$ is the common center of the balls, and with $\tilde{R}=R$, we obtain \eqref{eq: 4.19}.
\end{proof}

We are now in a position to prove the first assertion of Theorem~\ref{thm: 1.2}. More precisely, we show that for every $\beta\in(0,1)$ there exists a radius $R_0\leq 1/e$, depending on $n,N,p,\|D\bu\|_{L^p},\tilde{L},\gamma,\beta$ and $\omega(\cdot)$, such that the H\"older seminorm
\begin{equation}    \label{eq: 4.20}
    [\bu]_{C^{0,\beta}(B_{R/2};\mr^N)}:=\sup_{x,y \in B_{R/2}, x\neq y} \frac{|\bu(x)-\bu(y)|}{|x-y|^{\beta}}
\end{equation}
is finite for every ball $B_R\equiv B_R(x_0)\Subset\Omega$ with $R\leq R_0$ and satisfies the quantitative estimate
\begin{equation}    \label{eq: 4.21}
    [\bu]_{C^{0,\beta}(B_{R/2};\mr^N)}\leq c\left( R^{p(1-\beta)}\Xint-_{B_R}H(x,D\bu)\,dx \right)^{\frac{1}{p}},
\end{equation}
where $c\equiv c(n,N,p,\|D\bu\|_{L^p},\tilde{L},\gamma,\beta)$.
\begin{proof}[Proof of \ref{thm: 1.2(1)}]
    Fix $\beta\in(0,1)$ and set
    \begin{equation}    \label{eq: 4.22}
        \delta:=\min\left\{ p(1-\beta),\frac{n}{2} \right\}\in(0,n), \qquad \beta':=1-\frac{\delta}{p}, \qquad \lambda:=n+p-\delta,
    \end{equation}
    so that $\beta\leq\beta'<1$ and $\lambda\in(n,n+p)$. Let $R_0\leq1/e$ be the radius given by Lemma~\ref{lem: 4.8} for this choice of $\delta$, and let $B_R\equiv B_R(x_0)\Subset\Omega$ with $R\leq R_0$. For every $y\in B_{R/2}$ and $0<\varrho\leq R/2$ we have $B_\varrho(y)\subset B_{R/2}(y)\subset B_R$ and $B_{R/2}(y)\Subset\Omega$. Therefore, Lemma~\ref{lem: 3.2} and \eqref{eq: 4.19}, applied to the concentric balls $B_\varrho(y)\subset B_{R/2}(y)$, give
    \begin{equation}    \label{eq: 4.23}
        \begin{aligned}
            \int_{B_\varrho(y)}|\bu-(\bu)_{B_\varrho(y)}|^p\,dx
            & \leq c\varrho^p \int_{B_\varrho(y)}|D\bu|^p\,dx \leq c\varrho^p \int_{B_\varrho(y)}H(x,D\bu)\,dx \\
            & \leq c\varrho^p\left( \frac{2\varrho}{R} \right)^{n-\delta}\int_{B_{R/2}(y)}H(x,D\bu)\,dx \\
            & \leq c\varrho^{\lambda}R^{-(n-\delta)}\int_{B_R}H(x,D\bu)\,dx,
        \end{aligned}
    \end{equation}
    where $c\equiv c(n,N,p,\|D\bu\|_{L^p},\tilde{L},\delta,\gamma)$. Here and in the following we use the elementary inequality $\int_E|\bu-(\bu)_E|^p\,dx\leq2^p\int_E|\bu-\mathbf{m}|^p\,dx$, which holds for every measurable set $E$ with $0<|E|<\infty$ and every $\mathbf{m}\in\mr^N$. Applied with $E=B_{R/2}\cap B_\varrho(y)$ and $\mathbf{m}=(\bu)_{B_\varrho(y)}$, it shows that the left-hand side of \eqref{eq: 4.23} controls the corresponding integral over $B_{R/2}\cap B_\varrho(y)$ for $\varrho\leq R/2$. If instead $\varrho>R/2$, then $B_{R/2}\cap B_\varrho(y)\subset B_{R/2}$, and Lemma~\ref{lem: 3.2} on $B_{R/2}$ gives
    \begin{equation}    \label{eq: 4.24}
        \begin{aligned}
            & \int_{B_{R/2}\cap B_\varrho(y)}|\bu-(\bu)_{B_{R/2}\cap B_\varrho(y)}|^p\,dx \\
            & \quad \leq 2^p\int_{B_{R/2}}|\bu-(\bu)_{B_{R/2}}|^p\,dx \leq cR^p\int_{B_{R}}H(x,D\bu)\,dx \\
            & \quad \leq c\varrho^{\lambda}R^{-(n-\delta)}\int_{B_R}H(x,D\bu)\,dx,
        \end{aligned}
    \end{equation}
    since $R<2\varrho$. Therefore $\bu\in\mc{L}^{p,\lambda}(B_{R/2};\mr^N)$ and
    \begin{equation}    \label{eq: 4.25}
        [\bu]_{\mc{L}^{p,\lambda}(B_{R/2};\mr^N)}^p \leq cR^{-(n-\delta)}\int_{B_R}H(x,D\bu)\,dx = cR^{\delta}\Xint-_{B_R}H(x,D\bu)\,dx.
    \end{equation}
    By Theorem~\ref{thm: 4.7}, applied in the ball $B_{R/2}$ with $\beta'=(\lambda-n)/p$, the function $\bu$ belongs to $C^{0,\beta'}(\overline{B_{R/2}};\mr^N)$. Since $|x-y|^{\beta'-\beta}\leq R^{\beta'-\beta}$ for all $x,y\in B_{R/2}$, we obtain
    \begin{equation*}
        \begin{aligned}
            [\bu]_{C^{0,\beta}(B_{R/2};\mr^N)}
            & \leq R^{\beta'-\beta}[\bu]_{C^{0,\beta'}(B_{R/2};\mr^N)} \\
            & \leq cR^{\beta'-\beta}[\bu]_{\mc{L}^{p,\lambda}(B_{R/2};\mr^N)} \\
            & \leq c\left( R^{p(\beta'-\beta)+\delta}\Xint-_{B_R}H(x,D\bu)\,dx \right)^{\frac{1}{p}} \\
            & = c\left( R^{p(1-\beta)}\Xint-_{B_R}H(x,D\bu)\,dx \right)^{\frac{1}{p}},
        \end{aligned}
    \end{equation*}
    for a constant $c$ depending only on $n,N,p,\|D\bu\|_{L^p},\tilde{L},\gamma$ and $\beta$. This proves \eqref{eq: 4.21}, and therefore $\bu\in C_{\mathrm{loc}}^{0,\beta}(\Omega;\mr^N)$ for every $\beta\in(0,1)$, which completes the proof of Theorem~\ref{thm: 1.2}\ref{thm: 1.2(1)}.
\end{proof}

\subsection{H\"older continuity of the gradient} \label{subsection 4.3}

In this subsection, we complete the proof of Theorem~\ref{thm: 1.2} by proving the second assertion, which concerns the H\"older continuity of the gradient of minimizers. Let $\bu \in W^{1,p}(\Omega;\mr^N)$ be a vector-valued local minimizer of the functional $\plog$ defined in \eqref{eq: 1.2}. We prove that if
\begin{equation}    \label{eq: 4.26}
    \omega(R) \leq \tilde{L}R^\sigma \quad \text{for every }R \leq 1
\end{equation}
for some $\sigma \in (0,1)$ and $\tilde{L} \geq 1$, then there exists $\beta \in (0,1)$, depending only on $n$, $N$, $p$, $\gamma$ and $\sigma$, such that
\begin{equation}    \label{eq: 4.27}
    D\bu \in C_{\mathrm{loc}}^{0,\beta}(\Omega;\mr^{N\times n}).
\end{equation}
Note that \eqref{eq: 4.26} implies \eqref{eq: 4.17}, as well as \eqref{eq: 3.5} with $\tilde{L}$ replaced by $\tilde{L}(\gamma/(e\sigma))^\gamma$, since
\begin{equation*}
    \sup_{0<R\leq1}R^{\sigma}\log^\gamma\left( \frac{1}{R} \right)=\left( \frac{\gamma}{e\sigma} \right)^{\gamma}.
\end{equation*}
In particular, all the results of the previous subsections are available, and the radius $R_0$ of Lemma~\ref{lem: 4.8} now depends only on $n$, $N$, $p$, $\|D\bu\|_{L^p}$, $\tilde{L}$, $\delta$, $\gamma$ and $\sigma$. To simplify the notation we keep denoting by $\tilde{L}$ the constant in \eqref{eq: 3.5}.

\begin{lemma}    \label{lem: 4.9}
    Let $\tilde{\alpha}\in(0,1)$ be the exponent of Theorem~\ref{thm: 2.13}. Then the inequality
    \begin{equation}    \label{eq: 4.28}
        \Xint-_{B_\varrho}|D\bu-(D\bu)_{B_\varrho}|^p \,dx \leq c\left[ \left( \frac{\varrho}{R} \right)^{\tilde{\alpha}p}+R^{\frac{\sigma}{4}}\left( \frac{R}{\varrho} \right)^n \right]\Xint-_{B_R}H(x,D\bu)\,dx
    \end{equation}
    holds for every pair of concentric balls $B_\varrho \subset B_R \Subset \Omega$ with $0<\varrho \leq R \leq 1/e$, where $c$ is a positive constant depending only on $n,N,p,\tilde{L},\|D\bu\|_{L^p},\sigma$ and $\gamma$.
\end{lemma}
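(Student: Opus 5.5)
We argue by comparison with the frozen minimizer $\bv$ of \eqref{eq: 4.7}. It suffices to treat $\varrho\leq R/4$; for $\varrho>R/4$ the estimate follows from $|D\bu|^p\leq H(x,D\bu)$ and the factor $(R/\varrho)^n\geq1$ in front of $R^{\sigma/4}$. The first step is the triangle inequality:
\begin{equation*}
\Xint-_{B_\varrho}|D\bu-(D\bu)_{B_\varrho}|^p\,dx\leq c\Xint-_{B_\varrho}|D\bv-(D\bv)_{B_\varrho}|^p\,dx+c\Xint-_{B_\varrho}|D\bu-D\bv|^p\,dx .
\end{equation*}
For the first term we apply \eqref{eq: 2.24} on the ball $B_{R/2}$, with $a_0=a_i(R)$. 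This bounds it by $c(\varrho/R)^{\tilde\alpha p}$ times the frozen energy of $\bv$ on $B_{R/2}$. By \eqref{eq: 4.10}, $a_i(R)\leq a(x)$ and the doubling of the ball, that energy is at most $c\Xint-_{B_R}H(x,D\bu)\,dx$. Since $\tilde\alpha$ and $c$ are independent of $a_0$ by Theorem~\ref{thm: 2.13}, this term is uniform.

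For the second term we enlarge the domain and estimate
\begin{equation*}
\Xint-_{B_\varrho}|D\bu-D\bv|^p\,dx\leq c\left(\frac{R}{\varrho}\right)^n\Xint-_{B_{R/2}}|D\bu-D\bv|^p\,dx .
\end{equation*}
It then remains to show
\begin{equation*}
\Xint-_{B_{R/2}}|D\bu-D\bv|^p\,dx\leq cR^{\sigma/4}\Xint-_{B_R}H(x,D\bu)\,dx .
\end{equation*}
By Lemma~\ref{lem: 4.4} and \eqref{eq: 4.26},
\begin{equation*}
\Xint-_{B_{R/2}}|\bV_p(D\bu)-\bV_p(D\bv)|^2\,dx\leq c\,R^{\sigma}\log^\gamma(1/R)\Xint-_{B_R}H(x,D\bu)\,dx .
\end{equation*}
We will absorb the logarithm using $R^{\sigma/2}\log^\gamma(1/R)\leq c(\gamma,\sigma)$. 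This leaves the factor $R^{\sigma/2}$.

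If $p\geq2$, inequality \eqref{eq: 2.16} converts the $\bV_p$-bound directly into a bound for $|D\bu-D\bv|^p$ with the factor $R^{\sigma/2}\leq R^{\sigma/4}$.

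If $1<p<2$, we argue as in the proof of Theorem~\ref{thm: 2.13}, using \eqref{eq: 2.15} and H\"older's inequality with exponents $2/p$ and $2/(2-p)$:
\begin{equation*}
\Xint-_{B_{R/2}}|D\bu-D\bv|^p\,dx\leq c\left(\Xint-_{B_{R/2}}|\bV_p(D\bu)-\bV_p(D\bv)|^2\,dx\right)^{\frac p2}\left(\Xint-_{B_{R/2}}(|D\bu|+|D\bv|)^p\,dx\right)^{\frac{2-p}{2}} .
\end{equation*}
By minimality \eqref{eq: 4.10}, the second factor is controlled by $c\left(\Xint-_{B_R}H(x,D\bu)\,dx\right)^{(2-p)/2}$. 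The first factor contributes $R^{\sigma p/4}$ times $\left(\Xint-_{B_R}H(x,D\bu)\,dx\right)^{p/2}$, so the total power of the energy is one. The gain is then $R^{\sigma p/4}$ rather than $R^{\sigma/4}$.

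The main obstacle is therefore exponent bookkeeping: for $p<2$ the H\"older split yields $R^{\sigma p/4}$, and since $p>1$ this is at least $R^{\sigma/4}$ only after a different choice of where the logarithm is absorbed. We plan to absorb $\log^\gamma(1/R)$ by $R^{-\sigma(1-1/(2p))}$-type powers, so that $R^{\sigma}\log^\gamma(1/R)\leq cR^{\sigma/(2p)}$. Raised to $p/2$, this gives $R^{\sigma/4}$ exactly, and for $p\geq2$ it gives $R^{\sigma/(2p)}\le$ something at least as good after a similar choice. The resulting constants depend only on $n,N,p,\tilde L,\|D\bu\|_{L^p},\sigma,\gamma$, as required. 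Combining the two terms gives \eqref{eq: 4.28}.
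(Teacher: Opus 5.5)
Your argument is the paper's proof: the triangle inequality against the frozen minimizer $\bv$ of \eqref{eq: 4.7}, then \eqref{eq: 2.24} on $B_{R/2}$ with the minimality \eqref{eq: 4.10} for the first term, then Lemma~\ref{lem: 4.4} with \eqref{eq: 2.16} or with \eqref{eq: 2.15} and H\"older's inequality (exponents $2/p$, $2/(2-p)$) for the second. It is complete once two slips are corrected.

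\textbf{The ``main obstacle'' does not exist.} You have read the comparison backwards. Since $R\leq 1/e<1$ and $p>1$, you have $R^{\sigma p/4}\leq R^{\sigma/4}$. So the case $1<p<2$ closes at once with the absorption $R^{\sigma/2}\log^\gamma(1/R)\leq (2\gamma/(e\sigma))^\gamma$ you already used for $p\geq 2$, which is exactly what the paper does.

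Your alternative absorption $R^{\sigma(1-1/(2p))}\log^\gamma(1/R)\leq c(p,\gamma,\sigma)$ is valid and also yields $R^{\sigma/4}$ when $1<p<2$, but it is unnecessary. Your closing remark about $p\geq 2$ is wrong as written. With that absorption, the case $p\geq2$ gives only the factor $R^{\sigma/(2p)}\geq R^{\sigma/4}$, which is not bounded by $cR^{\sigma/4}$ when $p>2$. For $p\geq 2$ you must keep the $R^{\sigma/2}\leq R^{\sigma/4}$ bound from your earlier paragraph, which is correct.

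\textbf{The trivial range $\varrho>R/4$ is justified by the wrong term.} The second term of the bracket cannot supply the bound, since $R^{\sigma/4}(R/\varrho)^n\leq 4^nR^{\sigma/4}$ becomes arbitrarily small as $R\to0$. The estimate comes from the first term. You have $(\varrho/R)^{\tilde\alpha p}\geq 4^{-\tilde\alpha p}$, together with
$\Xint-_{B_\varrho}|D\bu-(D\bu)_{B_\varrho}|^p\,dx\leq 2^p4^n\Xint-_{B_R}H(x,D\bu)\,dx$.
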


\begin{proof}
    It suffices to prove \eqref{eq: 4.28} for $0<\varrho \leq R/2$, since for $R/2 <\varrho \leq R$ it holds trivially. Let $a_i(R)$ be as in \eqref{eq: 4.2} and let $\bv$ be the solution of the Dirichlet problem \eqref{eq: 4.7}. We first recall that, for every $\bz \in \mr^{N\times n}$,
    \begin{equation*}
        \Xint-_{B_\varrho}|D\bu -(D\bu)_{B_\varrho}|^p\,dx \leq c\Xint-_{B_\varrho}|D\bu-\bz|^p\,dx,
    \end{equation*}
    for a constant $c \equiv c(p)$. Using this inequality, we have
    \begin{equation}    \label{eq: 4.29}
        \begin{aligned}
            \Xint-_{B_\varrho}|D\bu -(D\bu)_{B_\varrho}|^p\,dx 
            & \leq c\Xint-_{B_\varrho}|D\bu-(D\bv)_{B_\varrho}|^p\,dx\\
            & \leq c\Xint-_{B_\varrho}|D\bv-(D\bv)_{B_\varrho}|^p\,dx+c\Xint-_{B_\varrho}|D\bu-D\bv|^p\,dx.
        \end{aligned}
    \end{equation}
    To estimate the first term on the right-hand side, we apply \eqref{eq: 2.24} to $\bv$, with $B_{R/2}$ in place of $B_R$ and with $a_0=a_i(R)$, and then use the minimality \eqref{eq: 4.10} of $\bv$. This gives, for a constant $c\equiv c(n,N,p,\gamma)$,
    \begin{equation}    \label{eq: 4.30}
        \begin{aligned}
            \Xint-_{B_\varrho}|D\bv-(D\bv)_{B_\varrho}|^p \,dx 
            & \leq c\left( \frac{\varrho}{R} \right)^{\tilde{\alpha}p}\Xint-_{B_{R/2}}(|D\bv|^p+a_i(R)|D\bv|^p\log^{\gamma}(e+|D\bv|))\,dx\\
            & \leq c\left( \frac{\varrho}{R} \right)^{\tilde{\alpha}p}\Xint-_{B_R}(|D\bu|^p+a_i(R)|D\bu|^p\log^{\gamma}(e+|D\bu|))\,dx\\
            & \leq c\left( \frac{\varrho}{R} \right)^{\tilde{\alpha}p}\Xint-_{B_R}H(x,D\bu)\,dx.
        \end{aligned}
    \end{equation}
    For the second term on the right-hand side of \eqref{eq: 4.29} we have
    \begin{equation*}
        \Xint-_{B_\varrho}|D\bu-D\bv|^p\,dx \leq c\left( \frac{R}{\varrho} \right)^n\Xint-_{B_{R/2}}|D\bu-D\bv|^p\,dx.
    \end{equation*}
    We now distinguish two cases. If $p\geq 2$, then \eqref{eq: 2.16}, Lemma~\ref{lem: 4.4} and the H\"older continuity \eqref{eq: 4.26} of $a(\cdot)$ give
    \begin{align*}
        \Xint-_{B_{R/2}} |D\bu-D\bv|^p\,dx
        & \leq c\Xint-_{B_{R/2}}|\bV_p(D\bu)-\bV_p(D\bv)|^2\,dx \\
        & \leq c \omega(R)\log^{\gamma}\left( \frac{1}{R} \right)\Xint-_{B_R}H(x,D\bu)\,dx \\
        & \leq \frac{cR^{\frac{\sigma}{2}}}{\sigma^\gamma}\Xint-_{B_R}H(x,D\bu)\,dx
    \end{align*}
    for a constant $c$ depending only on $n,N,p,\|D\bu\|_{L^p},\tilde{L}$ and $\gamma$, where we also used that
    \begin{equation*}
        \sup_{0<R\leq1}R^{\frac{\sigma}{2}}\log^{\gamma}\left( \frac{1}{R} \right)=\left( \frac{2\gamma}{e\sigma} \right)^{\gamma}.
    \end{equation*}
    If $1<p<2$, then \eqref{eq: 2.15}, H\"older's inequality with the conjugate exponents $2/p$ and $2/(2-p)$, Lemma~\ref{lem: 4.4} and again the minimality of $\bv$ give
    \begin{align*}
        & \Xint-_{B_{R/2}} |D\bu-D\bv|^p\,dx \\
        & \quad \leq c\Xint-_{B_{R/2}}|\bV_p(D\bu)-\bV_p(D\bv)|^p(|D\bu|+|D\bv|)^{\frac{p(2-p)}{2}}\,dx\\
        & \quad \leq c\left( \Xint-_{B_{R/2}} |\bV_p(D\bu)-\bV_p(D\bv)|^2\,dx \right)^{\frac{p}{2}}\left( \Xint-_{B_{R/2}} (|D\bu|+|D\bv|)^p\,dx \right)^{\frac{2-p}{2}}\\
        & \quad \leq cR^{\frac{\sigma}{4}} \Xint-_{B_R}H(x,D\bu)\,dx,
    \end{align*}
    for some $c\equiv c(n,N,p,\tilde{L},\sigma,\|D\bu\|_{L^p},\gamma)$. In both cases we conclude that
    \begin{equation}    \label{eq: 4.31}
        \Xint-_{B_\varrho}|D\bu-D\bv|^p\,dx \leq cR^{\frac{\sigma}{4}}\left( \frac{R}{\varrho} \right)^n\Xint-_{B_R}H(x,D\bu)\,dx
    \end{equation}
    for a constant $c$ depending on $n,N,p,\tilde{L},\sigma,\|D\bu\|_{L^p}$ and $\gamma$. Inserting \eqref{eq: 4.30} and \eqref{eq: 4.31} into \eqref{eq: 4.29}, we deduce \eqref{eq: 4.28}.
\end{proof}

We are now ready to prove the second assertion of Theorem~\ref{thm: 1.2}, that is, \eqref{eq: 4.27}.
\begin{proof}[Proof of \ref{thm: 1.2(2)}]
    Choosing $R=\varrho^{1-\varepsilon}$ in \eqref{eq: 4.28}, with
    \begin{equation*}
        \varepsilon := \frac{\sigma}{4\tilde{\alpha}p+\sigma+4n} \in (0,1),
    \end{equation*}
    we deduce that for every ball $B_{\varrho^{1-\varepsilon}}\equiv B_{\varrho^{1-\varepsilon}}(x_0) \Subset \Omega$ with $0<\varrho^{1-\varepsilon} \leq 1/e$ the inequalities
    \begin{equation}    \label{eq: 4.32}
        \begin{aligned}
            \Xint-_{B_\varrho}|D\bu-(D\bu)_{B_\varrho}|^p\,dx 
            & \leq c[\varrho^{\varepsilon\tilde{\alpha}p}+\varrho^{\frac{(1-\varepsilon)\sigma}{4}-\varepsilon n}] \Xint-_{B_{\varrho^{1-\varepsilon}}}H(x,D\bu)\,dx\\
            & \leq c\varrho^{\varepsilon \tilde{\alpha}p}\Xint-_{B_{\varrho^{1-\varepsilon}}}H(x,D\bu)\,dx
        \end{aligned}
    \end{equation}
    hold for a constant $c$ depending on $n,N,p,\|D\bu\|_{L^p},\tilde{L},\sigma$ and $\gamma$. Next, let $\Omega'\Subset\Omega$ be an open ball and set
    \begin{equation*}
        \delta:=\frac{\varepsilon \tilde{\alpha} p}{2(1-\varepsilon)}\in(0,n) \qquad \text{and} \qquad R_1:=\frac{1}{4}\min\{R_0,\operatorname{dist}(\Omega',\partial \Omega)\},
    \end{equation*}
    where $R_0$ is the radius determined in \eqref{eq: 4.18} for this choice of $\delta$, so that \eqref{eq: 4.19} holds for $0 < \varrho \leq R \leq R_0$. In particular, $R_1$ depends only on $n,N,p,\|D\bu\|_{L^p},\tilde{L},\sigma,\gamma$ and $\operatorname{dist}(\Omega',\partial \Omega)$. For every $x_0 \in \Omega'$ and $0 <\varrho <R_1^{1/(1-\varepsilon)}$, we have $\varrho^{1-\varepsilon} < R_1 \leq R_0$ and $B_{R_1}(x_0)\Subset\Omega$ by the definition of $R_1$. Thus, applying \eqref{eq: 4.19} with $\varrho$ and $R$ replaced by $\varrho^{1-\varepsilon}$ and $R_1$, respectively, we obtain
    \begin{equation*}
        \Xint-_{B_{\varrho^{1-\varepsilon}}}H(x,D\bu)\,dx \leq c \left( \frac{\varrho^{1-\varepsilon}}{R_1} \right)^{-\delta} \Xint-_{B_{R_1}}H(x,D\bu)\,dx.
    \end{equation*}
    Combining \eqref{eq: 4.32} with the above estimate, we conclude that
   \begin{equation} \label{eq: 4.33}
        \begin{aligned}
            \Xint-_{B_\varrho}|D\bu-(D\bu)_{B_\varrho}|^p\,dx 
            & \leq c\varrho^{\varepsilon \tilde{\alpha}p} \Xint-_{B_{\varrho^{1-\varepsilon}}}H(x,D\bu)\,dx \\
            & \leq c\varrho^{\varepsilon \tilde{\alpha}p}\left( \frac{\varrho^{1-\varepsilon}}{R_1} \right)^{-\delta} \Xint-_{B_{R_1}}H(x,D\bu)\,dx\\
            & \leq c\varrho^{\varepsilon \tilde{\alpha}p-(1-\varepsilon)\delta}\Xint-_{B_{R_1}}H(x,D\bu)\,dx\\
            & = c \varrho^{\frac{\varepsilon \tilde{\alpha} p}{2}}\Xint-_{B_{R_1}}H(x,D\bu)\,dx.
        \end{aligned}
    \end{equation}
    Multiplying both sides of the above inequality by $|B_\varrho|\varrho^{-\varepsilon \tilde{\alpha}p/2}$, we obtain
    \begin{equation} \label{eq: 4.34}
        \varrho^{-\left( n+\frac{\varepsilon\tilde{\alpha}p}{2} \right)}\int_{B_\varrho}|D\bu-(D\bu)_{B_{\varrho}}|^p\,dx \leq c\left( \frac{1}{R_1} \right)^n \int_{B_{R_1}}H(x,D\bu)\,dx.
    \end{equation}
    Note that \eqref{eq: 4.34} holds for every $x_0\in\Omega'$ and every $0<\varrho<\varrho_1:=R_1^{1/(1-\varepsilon)}$. Set $\lambda:=n+\varepsilon\tilde{\alpha}p/2\in(n,n+p)$. For $\varrho\geq\varrho_1$ we estimate
    \begin{equation*}
        \begin{aligned}
            \varrho^{-\lambda}\int_{\Omega'(x_0,\varrho)}|D\bu-(D\bu)_{\Omega'(x_0,\varrho)}|^p\,dx
            & \leq 2^p\varrho_1^{-\lambda}\int_{\Omega'}|D\bu|^p\,dx \\
            & \leq 2^p\varrho_1^{-\lambda}\int_{\Omega}H(x,D\bu)\,dx,
        \end{aligned}
    \end{equation*}
    while for $\varrho<\varrho_1$ we use the inequality $\int_E|D\bu-(D\bu)_E|^p\,dx\leq2^p\int_E|D\bu-\mathbf{m}|^p\,dx$ with $E=\Omega'(x_0,\varrho)$ and $\mathbf{m}=(D\bu)_{B_\varrho(x_0)}$ together with \eqref{eq: 4.34}. Therefore $D\bu\in\mc{L}^{p,\lambda}(\Omega';\mr^{N\times n})$ and
    \begin{equation} \label{eq: 4.35}
        [D\bu]_{\mc{L}^{p,\lambda}(\Omega';\mr^{N\times n})}^p \leq c\left( R_1^{-n}+\varrho_1^{-\lambda} \right)\int_{\Omega}H(x,D\bu)\,dx.
    \end{equation}
    Theorem~\ref{thm: 4.7}, applied with $\beta=(\lambda-n)/p=\varepsilon\tilde{\alpha}/2$, now yields $D\bu\in C^{0,\beta}(\overline{\Omega'};\mr^{N\times n})$ and
    \begin{equation*}
        [D\bu]_{C^{0,\beta}(\Omega';\mr^{N\times n})} \leq c[D\bu]_{\mc{L}^{p,\lambda}(\Omega';\mr^{N\times n})} \leq c\left(\int_{\Omega}H(x,D\bu)\,dx \right)^{\frac{1}{p}},
    \end{equation*}
    where $c\equiv c(n,N,p,\|D\bu\|_{L^p},\tilde{L},\sigma,\gamma,\operatorname{dist}(\Omega',\partial\Omega))$. Since every compact subset of $\Omega$ is covered by finitely many such balls $\Omega'$, we conclude that $D\bu \in C_{\mathrm{loc}}^{0,\beta}(\Omega;\mr^{N\times n})$ with $\beta=\varepsilon\tilde{\alpha}/2$, which depends only on $n,N,p,\gamma$ and $\sigma$. This proves \eqref{eq: 4.27} and completes the proof of Theorem~\ref{thm: 1.2}.
\end{proof}

\medskip
\noindent\textbf{Acknowledgments.} This work is supported by the National Research Foundation of Korea (NRF) grant funded by the Korea government [Grant Nos. RS-2025-00555316 and RS-2025-25415411].

\providecommand{\bysame}{\leavevmode\hbox to3em{\hrulefill}\thinspace}
\providecommand{\MR}{\relax\ifhmode\unskip\space\fi MR }
\providecommand{\MRhref}[2]{%
  \href{http://www.ams.org/mathscinet-getitem?mr=#1}{#2}
}
\providecommand{\href}[2]{#2}

\end{document}